\documentclass[12pt]{article}
\usepackage{amsmath, amssymb, amsthm}
\usepackage{graphicx}
\usepackage{enumerate}
\usepackage{natbib}
\usepackage{url} 
\RequirePackage[colorlinks,citecolor=blue,urlcolor=blue]{hyperref}
\usepackage{algorithm}

\usepackage[noend]{algpseudocode}

\usepackage{xcolor}
\usepackage{tikz} 

\usepackage{ulem}
\newtheorem{theorem}{Theorem}
\newtheorem{coro}[theorem]{Corollary}
\newtheorem{prop}[theorem]{Proposition}

\newtheorem{assumption}{Assumption}
\newtheorem{remark}{Remark}
\newtheorem{example}{Example}
\newtheorem{lemma}[theorem]{Lemma}

\newcommand{\blind}{0}

\newcommand{\arxiv}{1}

\def\g{\boldsymbol}

\newcommand{\einsfun}{\g 1} 

\newcommand{\KL}{Kullback--Leibler}

\newcommand{\norm}[1]{\left\Vert#1\right\Vert}
\newcommand{\abs}[1]{\left\vert#1\right\vert}
\newcommand{\set}[1]{\left\{#1\right\}}
\newcommand{\suit}[1]{\left(#1\right)}

\newcommand{\Var}{\mathbb V}
\newcommand{\Cov}{\mbox{Cov}}

\newcommand{\td}{\stackrel{d}{\to}}
\newcommand{\tp}{\stackrel{P}{\to}}

\begin{document}

\def\spacingset#1{\renewcommand{\baselinestretch}%
{#1}\small\normalsize} \spacingset{1}


\if0\blind
{
  \title{\bf 
  A Kullback--Leibler divergence test for multivariate extremes: theory and practice}   
  \author{Sebastian Engelke\thanks{
    S.~Engelke acknowledges support from the Swiss National
Science Foundation under Grant 186858.}\hspace{.2cm}\\
    Research Institute for Statistics and Information Science,\\ University of Geneva, Switzerland\\
    and \\
    Philippe Naveau \thanks{
Part of P.~Naveau’s research work was supported by the   Agence Nationale de la Recherche via:  the SHARE PEPR Maths-Vives project (France 2030 ANR-24-EXMA-0008), EXSTA, PORC-EPIC, the PEPR TRACCS program  (PC4 EXTENDING, ANR-22-EXTR-0005), and  the PEPR   IRIMONT (France 2030 ANR-22-EXIR-0003). He has also benefited  from the Geolearning research chair.} \\
    Laboratoire des Sciences du Climat et de l’Environnement (CNRS), France\\
    and \\
    Chen Zhou\\
    Econometric Instistute, Erasmus School of Economics,\\
    Erasmus University Rotterdam, The Netherlands
    }
  \maketitle
} \fi

\if1\blind
{
  \bigskip
  \bigskip
  \bigskip
  \begin{center}
    {\LARGE\bf  
  A Kullback--Leibler divergence test for multivariate extremes: theory and practice\\
}
\end{center}
  \medskip
} \fi

\bigskip
\begin{abstract}

Testing whether two multivariate samples exhibit the same extremal behavior is an important problem in various fields including environmental and climate sciences. While several ad-hoc approaches exist in the literature, they often lack theoretical justification and statistical guarantees. On the other hand, extreme value theory provides the theoretical foundation for constructing asymptotically justified tests. We combine this theory with Kullback--Leibler divergence, a fundamental concept in information theory and statistics, to propose a test for equality of extremal dependence structures in practically relevant directions. Under suitable assumptions, we derive the limiting distributions of the proposed statistic under null and alternative hypotheses. Importantly, our test is fast to compute and easy to interpret by practitioners, making it attractive in applications. Simulations provide evidence of the power of our test. In a case study, we apply our method to show the strong impact of seasons on the strength of dependence between different aggregation periods (daily versus hourly) of heavy rainfall in France.

\end{abstract}

\noindent%
{\it Keywords:} 
Two-sample statistical test; multivariate regular variation; seasonal precipitation extremes; intensity-duration-frequency statistic
\vfill

\newpage
\spacingset{1.8} 

\section{Introduction}\label{sec: intro}

In environmental risk analysis, a recurrent question is to determine if two multivariate extreme events are statistically indistinguishable or not.
This inquiry is, for example, at the core of the field of extreme event attribution in climate science \cite[e.g.][]{naveau_statistical_2020} whose aim is to contrast extreme event probabilities  in a world with and without anthropogenic forcings. 
To identify  significant changes in compound climate extremes,  different statistical approaches have been used by the climate community \citep[e.g.][]{Zscheischler20review}, but they lack statistical guarantees.  
Besides climate studies, other examples motivating the need of metrics to contrast multivariate extremal behaviors can be found in 
systemic risk analysis in finance \citep[e.g.][]{emb1997}  and environmental sciences \citep[e.g.][]{hus2013,bui2008}.

Flood risk managers, for instance, are interested in determining if, beyond the marginal differences among  different rainfall gauges, extremal dependencies vary significantly across seasons, in space or according to temporal aggregation scales.    
To illustrate this point, we compare, for the city of Bordeaux in France over the period 2006--2023,  daily maxima of precipitation at the 6-minute scale with the  hourly scale; see the left panels of Figure \ref{fig:bordeaux} where the scatterplots in top and bottom rows correspond to the winter and fall seasons, respectively. 
Comparing the seasonal effect on rainfall  aggregations  at different time scales (here 6-min versus hourly) represents  a key research topic in hydrology that goes back at least half century; see, e.g., the intensity-duration-frequency curves 
of the  U.S.~Weather Bureau  \citep{Hershfield61}.
There is a large body of literature  on  modeling and interpreting the marginal rainfall distributions at the different aggregation scales \cite[e.g.,][]{Ulrich20,haruna2023modeling}. It is  natural to wonder if the joint extremal behaviors also change with seasons, i.e., do  the  gray and black clouds in Figure \ref{fig:bordeaux}  differ from each other?

\begin{figure}[tb]
\centering
    \includegraphics[trim={0cm 5cm 0 5cm},clip, width=.8\linewidth]{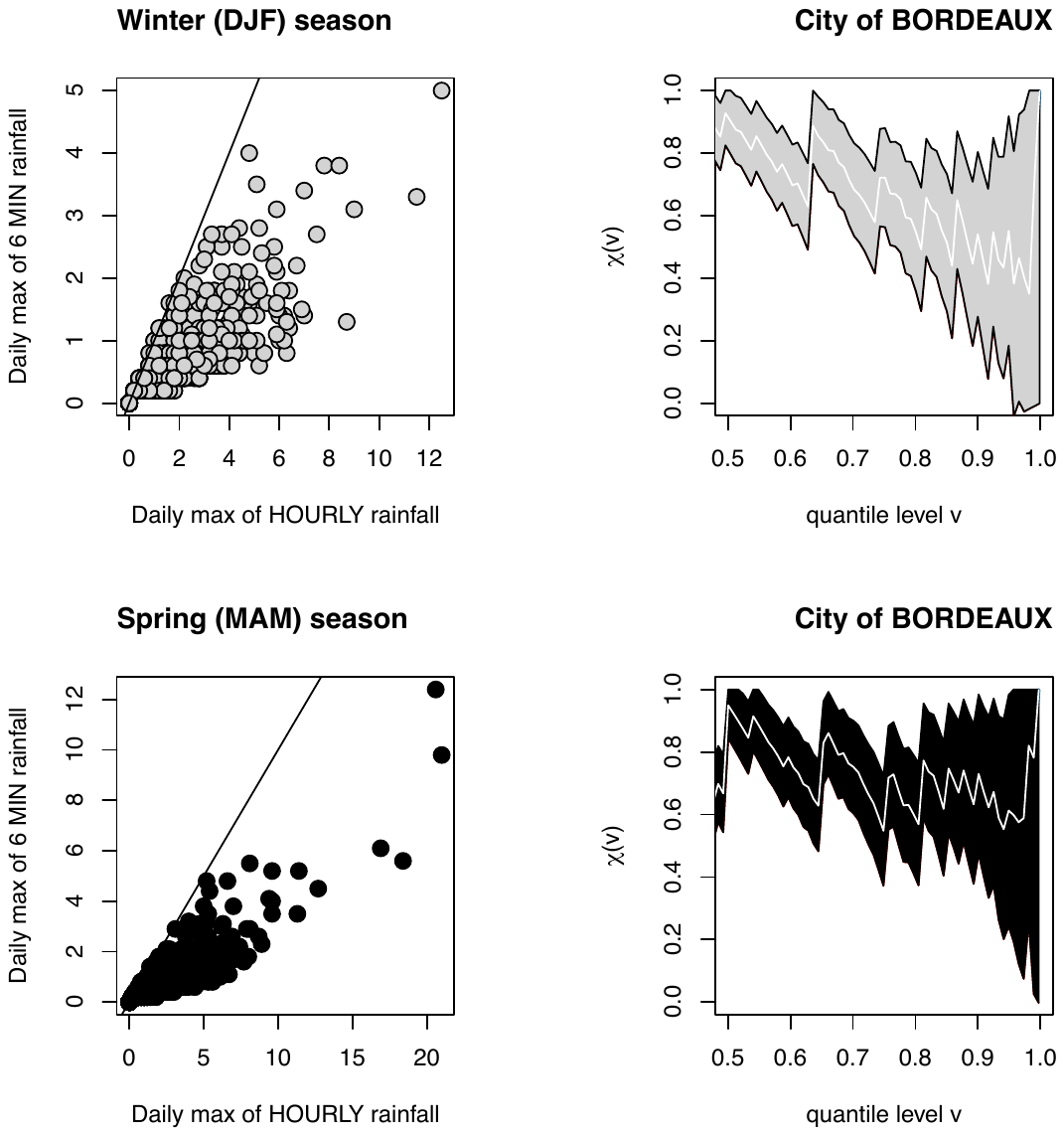}
	\caption{Left: scatter plots of daily maxima of hourly and 6-minute rainfall recorded in Bordeaux (France) in winter (top) and spring (bottom) during the period 2006--2023. Right: extremal correlation coefficient $\chi(v)$ with confidence intervals for this data, measuring the strength of dependence above the quantile level $v$; see also \eqref{eq: chi}. 
    }
	\label{fig:bordeaux}
\end{figure}

Multivariate extreme value theory provides the mathematical foundation to describe extremal dependence structures and to address such questions \cite[e.g.,][]{beirlant_statistics_2005,deh2006a}.
The first, somewhat simplistic attempt would be 
to contrast two bivariate extremal populations
by a summary statistic of the extremal dependence structure such as the extremal correlation $\chi$ \citep{col1999} defined in~\eqref{eq: chi} below. 
For our Bordeaux rainfall example, the right panels of Figure~\ref{fig:bordeaux} display the estimated extremal correlation (for different threshold values)
and their 95\% confidence intervals.
Visually and theoretically it is however difficult to conclude
whether winter and spring extremes have different extremal behaviors. Statistically speaking, the power of an associated test would be quite low. 
Beyond summary statistics, one could model the full extremal dependence either parametrically or non-parametrically \citep{bucher2017}.   
While this approach can improve the power, it demands significant expertise of multivariate extreme value theory and is computationally much more expensive.
In this work, we opt for a middle road to keep the computational simplicity and interpretability similar to a $\chi$ test, while adding flexibility in order to improve the power.

Mathematically, our goal is to test whether two $d$-dimensional random vectors $\g{\tilde X} = (\tilde X_1, \dots,  \tilde X_d)$ and $\g{\tilde Y} = (\tilde Y_1, \dots,  \tilde Y_d)$ have the same extremal dependence structure. Denote the $j$th marginal distributions of $\g{\tilde X}$ and $\g{\tilde Y}$ as $F_j$ and $G_j$, respectively. To avoid the impact of different marginal distributions (as for instance the 6-min and hourly rainfall distributions), we consider vectors $\g X$ and $\g Y$ with entries
\begin{align}\label{X_true}
  X_{j} = 1 / (1 - F_j(\tilde X_{j})), \qquad   Y_{j} = 1 / (1 - G_j(\tilde Y_{j})), \quad j = 1,\dots, d,
\end{align}
standardized to Pareto margins. Since this transformation 
does not change the copula, comparing the extremal dependence structure between $\g{\tilde X}$ and $\g{\tilde Y}$ can be achieved by comparing that between $\g{X}$ and $\g{Y}$.

For multivariate data, the definition of an extreme event is ambiguous. A common practice in extreme value theory is to summarize the vector of interest by a univariate risk functional $r: \mathbb [0,\infty)^d \to [0,\infty)$, which characterizes the types of events that are particularly relevant or impactful in specific application of interest.
An extreme observation of, say, $\g X$ is then defined by the $r$-exceedance set $\{r(\g X) > u\}$, where the high threshold $u>0$ specifies the magnitude of the extreme event \citep[e.g.,][]{dom2015, Fondeville22}. 
Typical examples for risk functionals are $r(\g x) = \max(x_1,\dots, x_d)$, $r(\g x) = \min(x_1,\dots, x_d)$  or $r(\g x) = \sqrt{x_1^2 + \dots + x_d^2}$; see the three panels in Figure~\ref{fig:risks} in $d=2$.

\begin{figure}[tb]
\centering
\includegraphics[trim={0cm 11cm 0 11cm},clip,width=1\linewidth]{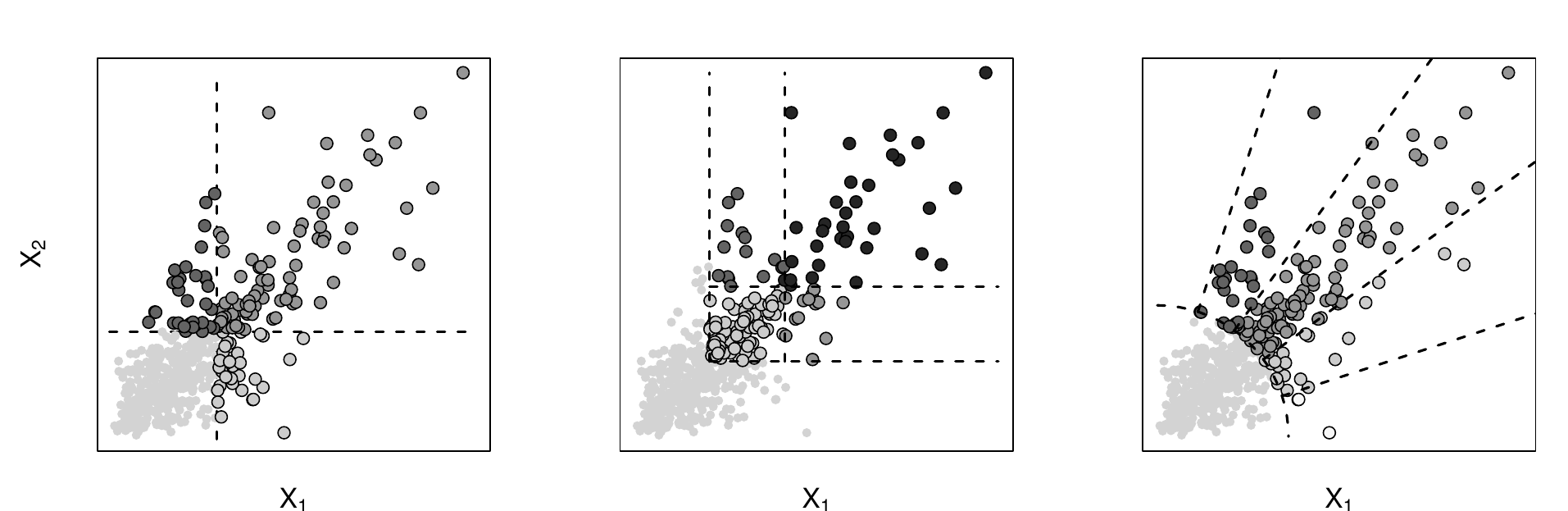}
	\caption{Three risk functionals $r(\g X)$ with  partition  examples obtained  on  
    simulated bivariate draws of $\g X_1,\dots, \g X_n$.
    Light gray dots are non extreme realizations and circled points in each panel represents extremes, i.e., $\{ \g X_i: r(\g X_i) >u \}$, defined with respect to the  risk functional  $r(\g x) = \max(x_1,x_2)$ (left panel); $r(\g x) = \min(x_1,x_2)$ (center panel); $r(\g x) = \sqrt{x_1^2 + x_2^2}$ (right panel). 
    The region  $\{ r(\g X) >u \}$ is divided in $K$  subsets in order to estimate 
    \eqref{cond_prob} with $K=3, 4$ and $5$ for the right, middle and right panel, respectively.
    }
	\label{fig:risks}
\end{figure}

On an intuitive level, our test consist of partitioning the exceedance set $\{ r(\g X) >u \}$ (the darker points in Figure~\ref{fig:risks}) in $K$ different different regions and check whether the number of extreme events in these sets differ between the samples of $\g X$ and $\g Y$; 
see the different shades of gray points in Figure~\ref{fig:risks} 
for examples of such partitionings. More mathematically, if $r$ is homogeneous of degree one, we may write 
$$
\{ r(\g X) >u \} =  u \Omega_r \mbox{ with } \Omega_r := \{\g x\in[0,\infty)^d:r(\g x)>1\}.
$$
For a  disjoint partition of Borel sets $A_1,\dots, A_K$  of  $\Omega_r$, our main assumption is the existence of the limit of conditional probabilities
\begin{equation}\label{cond_prob}
p_j := \lim_{u\to\infty}p_{j}(u):=\lim_{u\to\infty} P(\g X\in uA_j\mid r(\g X)>u) \in (0,1),\qquad j=1,\dots, K.
\end{equation}
This assumption is satisfied for many multivariate distributions, and it particular holds under the commonly used framework of multivariate regular variation~\citep{res2008}.
In the bivariate  case, a specific partition of $\Omega_r$ with $r(\g x)=\max(x_1,0)$ gives a direct link to the extremal correlation
\begin{equation}\label{eq: chi}
    \chi(v):= p_1(u) = P(X_2> u \mid X_1>u)= P(\g X\in uA_1\mid r(\g X)>u)
\end{equation}
where $v=1-1/u \in (0,1)$ and $A_1=[1,\infty)\times [1,\infty)$. 
This highlights that our approach extends the information given by $\chi$ by 
decomposing the exceedance set $\{ r(\g X) >u \}$ into smaller sub regions in order to capture more detailed information within this set. 

Analogously, we define probabilities $q_j$ for the random vector $\g Y$.  The main goal of this work is to test 
the null hypothesis
\begin{equation}\label{H0}
  H_0: p_j = q_j,\quad \mbox{ for all } j = 1,\dots, K,
\end{equation}
against the alternative that for at least one of these probabilities are not equal. If the extremal dependence structure of $\g X$ and $\g Y$ coincide then $H_0$ holds, or equivalently, if $H_0$ is rejected then the extremal dependence structures differ. Compared to testing the full extremal dependence structure as in \cite{bucher2017}, the simpler null hypothesis \eqref{H0} is 
often relevant in practice due to its straightforward interpretation. Indeed, focusing on particular sets allows us to define a fast-to-compute, easy-to-interpret and asymptotically well behaved test statistic.

Our test relies on the multinomial \KL{} (KL) divergence 
\begin{equation}\label{eq:KL_Multinomial}
{D}_K := \sum_{j=1}^K ( p_j-  q_j)(\log p_j-\log  q_j),
\end{equation}
between the two populations. The divergence $D_K$ is non-negative, and zero only if $H_0$ holds.
We construct an empirical estimator $\hat D_K$ of the theoretical $D_K$ based on samples of $\g X$ and $\g Y$, which is used to obtain a statistical test with a desired level $\alpha \in (0,1)$.
The statistic $\hat D_K$ is small if the probabilities of $\g X$ and $\g Y$ falling in the exceedance sets are similar, and when it is large we reject $H_0$. To formalize this, we derive an asymptotic large sample theory for $\hat D_K$. In particular, we show that if the marginal distribution functions in~\eqref{X_true} are known, then the properly normalized $\hat D_K$ behaves asymptotically like a $\chi^2(K-1)$ distribution. On the other hand, if marginal distributions are unknown we use empirical distribution functions for the standardization. The limit in this case is more complicated and we propose a bootstrap procedure to obtain critical values.

The advantage of our test for extremal dependence based on KL divergence is its ubiquity in applied sciences.
For example, \cite{nav2014} leveraged the KL divergence to study univariate climate extremes. Concerning the multivariate setup,
several studies have already applied our test statistic ${D}_K$ in~\eqref{eq:KL_Multinomial}, but with focus entirely on   applications rather than statistical properties of the test. For instance, \cite{Zscheischler20}  assessed whether  compound wind-precipitation extremes exhibit similar dependence structures in different data products. 
In a similar vein, our test was used for model evaluation, comparing the extremal dependence of observational data and climate models between oceanographic, fluvial and pluvial flooding drivers along the coastlines of the United States \citep{nas2021} and China \citep{li2024}. \cite{nas2023} tested for temporal changes in dependence between compound coastal and inland flooding.
A different perspective on the KL divergence $D_K$ was by interpreting it as a general distance measure for extremal dependence. Following this approach, \cite{vig2021} used it to obtain homogeneous clusters in terms of wind-precipitation extremes in Great Britain and Ireland.

Section~\ref{sec:test} introduces the KL divergence test and derives 
its asymptotic theory, with the cases of known and unknown 
marginals covered in Sections \ref{sec:known marginals} and \ref{sec:unknown marginals}, respectively.
To assess the performance of our test, a simulation study is detailed in Section \ref{sec: sim}.
Following the case study in Figure \ref{fig:bordeaux},
in Section~\ref{sec: Application} we
determine whether the seasonal cycle has a significant impact on the dependence structure between 6-min and hourly extreme precipitation; the data of this study are openly available at \url{https://meteo.data.gouv.fr/datasets/donnees-climatologiques-de-base-6-minutes/}.
All proofs can be found in the Appendix\if1\arxiv
{.
} \fi
\if0\arxiv
{
and the R code to reproduce all figures is included with this submission.
} \fi

\section{A two-sample KL divergence test for extremal dependence} \label{sec:test}
We work with $d$-dimensional random vectors $\g {\tilde X} = (\tilde X_1, \dots, \tilde X_d)$ and $\g {\tilde Y} = (\tilde Y_1, \dots, \tilde Y_d)$ and aim to test whether they share the same extremal dependence structure. We assume throughout that the standardized vectors $\g X$ and $\g Y$ defined in \eqref{X_true} are multivariate regularly varying, implying that the probabilities $p_j$ and $q_j$ are well defined.

We present basic background on multivariate regular variation in Section \ref{sec:background} and define our test statistic based on an observed sample in Section \ref{sec:method}. In practice, such a sample can be obtained if the marginal distributions of $\g{\tilde X}$ and $\g{\tilde Y}$ are known. For this case, we present statistical theories leading to size and power of the test in Section \ref{sec:known marginals}. Section \ref{sec:unknown marginals} handles the case where the marginal distributions are unknown. We show that the test is still valid, however, the critical value needs to be obtained via a bootstrap procedure.

\subsection{Background and multivariate regular variation}\label{sec:background}
We consider a generic random vector $\g X = (X_1, \dots , X_d)$ whose margins have already been standardized to Pareto margins according to~\eqref{X_true}. To model the extremal  dependence of $\g X$, a common approach is to assume  multivariate regularly varying (MRV) \citep{res2008}. This assumption is equivalent to the existence of a probability measure $\nu$, which we call the exceedance distribution, defined by
\begin{align}\label{mevd}
  \lim_{u\to \infty}  P(\g X/u \in  B | \norm{\g X}_\infty>u) = \nu(B),
\end{align}
for all Borel sets $B\subset \{ \g x \geq 0: \| \g x \|_\infty > 1\}$ such that 
$\nu(\partial B)=0$. It can be shown that $\nu$ satisfies the homogeneity property $\nu(tB)=t^{-1}\nu(B)$ for any $t>1$.

The measure $\nu$ describes the tail of the random vector $\g X$. Indeed, for a fixed $u>0$, the left-hand side of~\eqref{mevd} is the distribution of normalized exceedances, that is, the observations where at least one component is extreme; see the darker points in Figure~\ref{fig:risks}. Since each component $X_j$, $j=1,\dots, d$, follows the same marginal Pareto distribution, $\nu$ characterizes the dependence structure between the exceedances. Sending the threshold $u$ in~\eqref{mevd} to infinity guarantees that only the most extreme observations are considered, which is important to show proper theory for our testing procedure later.

\begin{remark}
  While each $X_j$ obtained from transformation~\eqref{X_true} has standard Pareto margins, the marginal distributions $F_j$ of the original data $\tilde X_j$ can be arbitrary. For instance, some components can be heavy-tailed, whereas other components  can have light tails or even finite upper endpoints. In climate science, this is important since interest is often in compound extremes with variables with fairly different tail behavior, such as joint temperature-precipitation \citep{zsc2017} or wind-precipitation  extremes \citep{Zscheischler20}.   
\end{remark}

While in~\eqref{mevd} an exceedance is an observation where at least one component is extreme, for multivariate data, the definition of an extreme event depends in general on the context of the domain of application.
This assumption characterizes also the exceedance distributions for other types of events. 
In order to have a general framework, as mentioned in the introduction, we use the notion of a homogeneous risk functional $r: [0,\infty)^d \to [0,\infty)$ satisfying $r( t \g x ) =t  \, r(\g x)$ for any $t>0$ and $x \in [0,+\infty)^d$. In addition, we assume that $r$ is measurable so that the set $\Omega_r := \{\g x\in[0,\infty)^d:r(\g x)>1\}$ is a Borel set bounded away from the origin. 
The next examples provide some typical risk functionals together with partitions of the corresponding sets $\Omega_r$.

\begin{example} \label{example: max}
  Consider the risk functional $r(\g x)=\max(x_1, \dots, x_d)$. For any non-empty $I\subset \{1,\dots, d\}$ define the set $A_I=\{x\in\mathbb R^d: x_j>1, j\in I; x_j\leq 1, j\in I^c\}$. Then $(A_I)_I$ is a partition of $K=2^d-1$ sets. The left-hand side of Figure~\ref{fig:risks} shows this partition in dimension $d=2$.
\end{example}

\begin{example} \label{example: min}
  Consider the risk functional $r(\g x)=\min(x_1, \dots, x_d)$. For any $I\subset \{1,\dots, d\}$ define the set $A_I=\{x\in\mathbb R^d: x_j>2, j\in I; 1<x_j\leq 2, j\in I^c\}$. Then $(A_I)_I$ is a partition of $K = 2^d$ sets. The center of Figure~\ref{fig:risks} shows this partition
    in dimension $d=2$.
\end{example}

\begin{example} \label{example: radius}
  For a general risk functional $r$ as above, let  $S_r = \set{x\in\mathbb R^d:r(\g x)=1}$ be the corresponding ``unit sphere''.
  For any partition $\tilde A_1, \dots, \tilde A_K$ of $S_r$, the sets $A_j= \{x \in\mathbb R^d: x/r(\g x) \in \tilde A_j, r(\g x) > 1\}$,  $j=1,\dots, K$, form a partition of $\Omega_r$. As an example, the right-hand side of Figure~\ref{fig:risks} shows this partition for the risk functional $r(\g x) = \sqrt{x_1^2 + x_2^2}$ in dimension $d=2$ with $K=5$.
\end{example}

The next result shows that under the MRV assumption, for a homogeneous risk functional $r$, the limiting probabilities in~\eqref{cond_prob} exist and have a simple representation in terms of an extended definition of~$\nu$: for any set $B \subset [0,\infty)^d\setminus \{0\}$ bounded away from the origin, that is, $\mu_B := \min\{\|\g x\|_\infty: \g x \in B\} > 0$, define $\nu(B) = \nu(B/\mu_B) / \mu_B.$ Note that this extended $\nu$, also called exponent measure \cite[Chapter 6]{deh2006a}, satisfies homogeneity with degree $-1$, for all $t >0$.

\begin{prop} \label{prop: values of p}
   Assume that the MRV assumption in \eqref{mevd} holds for $\g X$ with exponent measure $\nu$. Then for any Borel set $B\subset \Omega_r$ with $\nu(\partial B)=0$ 
    $$\lim_{u\to\infty} P(\g X\in uB|r(\g X)>u)=\frac{\nu(B)}{\nu(\Omega_r)}.$$
\end{prop}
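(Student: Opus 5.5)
The plan is to write the conditional probability as a ratio of two probabilities, each normalized by $P(\norm{\g X}_\infty > t)$ at a suitable level $t$, and then apply \eqref{mevd} to the numerator and the denominator separately. Since $r$ is homogeneous and measurable, $\Omega_r$ is bounded away from the origin, so $\mu := \mu_{\Omega_r} = \inf\{\norm{\g x}_\infty : r(\g x)>1\} > 0$. Because $B \subset \Omega_r$, we also have $\mu_B \geq \mu$. Set $t = \mu u$. Then $uB \subset u\Omega_r \subset \{\norm{\g x}_\infty > t\}$ (up to a boundary issue at $\norm{\g x}_\infty=\mu$, handled below). Hence
$$
P(\g X\in uB\mid r(\g X)>u) = \frac{P(\g X/t \in B/\mu \mid \norm{\g X}_\infty > t)}{P(\g X/t \in \Omega_r/\mu \mid \norm{\g X}_\infty > t)}.
$$
Both $B/\mu$ and $\Omega_r/\mu$ are subsets of $\{\norm{\g x}_\infty \geq 1\}$. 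Letting $u\to\infty$, so that $t\to\infty$, the numerator should tend to $\nu(B/\mu)$ and the denominator to $\nu(\Omega_r/\mu)$ by \eqref{mevd}.

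The numerator and denominator limits then have to be identified with the extended measure. By the homogeneity of degree $-1$ of the extended $\nu$, $\nu(B/\mu) = \mu\,\nu(B)$ and $\nu(\Omega_r/\mu)=\mu\,\nu(\Omega_r)$. The factor $\mu$ cancels, which gives $\nu(B)/\nu(\Omega_r)$. This step also requires checking that the extended definition $\nu(B)=\nu(B/\mu_B)/\mu_B$ is consistent with homogeneity for sets scaled by any factor. That follows from the stated homogeneity $\nu(tB)=t^{-1}\nu(B)$, $t>1$, of the exceedance distribution.

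Three technical points remain, and these are the main obstacle.
\begin{enumerate}
\item Applying \eqref{mevd} needs $\nu(\partial B)=0$, which is assumed, and also $\nu(\partial\Omega_r)=0$. If continuity of $r$ is available, $\partial \Omega_r \subset \{r=1\}$. Homogeneity then gives $\nu(\{r>s\}) = s^{-1}\nu(\Omega_r)$, so the mass of the level sets $\{r=s\}$ is zero for all $s$ except countably many, and homogeneity transfers this to $s=1$. Without continuity, I would simply assume $\nu(\partial\Omega_r)=0$, as is implicit in the statement.
\item The scaled sets may touch the level $\norm{\g x}_\infty = 1$ rather than lie strictly above it. To avoid this, I will use $t=\mu' u$ with $\mu'<\mu$ slightly smaller, so that all sets lie strictly inside $\{\norm{\g x}_\infty>1\}$. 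The same cancellation argument then works unchanged.
\item It must be verified that $\nu(\Omega_r)>0$, so that the ratio is well defined. This follows from $\Omega_r \supset \{\norm{\g x}_\infty > c\}$ for some $c$, using that $r$ is bounded above by a multiple of $\norm{\cdot}_\infty$ on the positive orthant. Failing that, it follows from the assumption that $p_j\in(0,1)$ is meaningful.
\end{enumerate}
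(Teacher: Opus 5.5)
Your proposal is correct and is essentially the paper's proof. The paper likewise picks $0<c<\min\{\norm{\g x}_\infty: r(\g x)>1\}$ (your $\mu'<\mu$), writes $P(\g X\in uB\mid r(\g X)>u)$ as a ratio of probabilities conditional on $\norm{\g X}_\infty>uc$, applies \eqref{mevd} at threshold $uc$, and cancels $c$ by homogeneity; it leaves $\nu(\partial\Omega_r)=0$ and $\nu(\Omega_r)>0$ implicit, whereas you address them.

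One correction to your remark 3. The inclusion $\Omega_r\supset\{\norm{\g x}_\infty>c\}$ would require $r$ to be bounded \emph{below} by a multiple of $\norm{\cdot}_\infty$; an upper bound gives only the reverse inclusion, which is the one the main step uses. A lower bound fails for $r=\min$, where $\nu(\Omega_r)=0$ under asymptotic independence, so positivity genuinely has to be assumed, as in your fallback.
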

The limiting probabilities $p_j$ and $q_j$, $j=1,\dots, K$, in~\eqref{cond_prob}
can be expressed in a similar way with respect to their exceedance distributions $\nu_X$ and $\nu_Y$, respectively.

\subsection{The \KL{} test statistic}\label{sec:method}
Let $\g X_1,\dots, \g X_n$ and $\g Y_1,\dots, \g Y_n$ be independent samples of the random vectors $\g X$ and $\g Y$, respectively.
Assume that both random vectors satisfy the MRV assumption~\eqref{mevd}.
Suppose the risk functional $r$ and the partitioning sets $A_1,\dots ,A_K$ are fixed. In the following, we detail an the algorithm to obtain an estimate of the \KL{} divergence test statistic for the null hypothesis~\eqref{H0} to assess the difference in extremal dependence between the two samples.   
In order to estimate the limiting probabilities in~\eqref{cond_prob}, we choose a data dependent threshold $u_n$ and estimate empirically the conditional probabilities
$p_j(u_n)$. To this end, denote by $R^{X}_{1,n}\leq\ldots\leq R^{X}_{n,n}$ the order statistics of the set $\set{r(\g X_i)}_{i=1}^n$. For some $k_n \leq n$ we set $u_n^X = R^{X}_{n-k_n,n}$, resulting in exactly $k_n$ exceedances since
\[ k_n = \sum_{i=1}^n\einsfun\{r(\g X_i)>R^{X}_{n-k_n,n}\}.\]
The empirical estimator of $p_j$, $j=1,\dots, K$, takes the form
\begin{align}
  \label{p_estimate}
  \hat p_{j} = \hat p_{j}(u_n^X) 
=\frac{1}{k_n}\sum_{i=1}^n \einsfun\{\g X_i\in  R^{X}_{n-k_n,n}A_j\}.
\end{align}
The empirical estimates $\hat q_j$ of $q_j$, $j=1,\dots, K$, are defined in a similar way based on the samples of $\g Y$. 
By plugging in these probability estimates into the definition of $D_K$ in~\eqref{eq:KL_Multinomial}, we obtain an empirical estimator of the \KL{} divergence by 
\begin{equation}\label{eq:KL_Multinomial2}
  \Hat{D}_K = \Hat{D}_K (n):= \sum_{j=1}^K (\hat p_j-\hat q_j)(\log \hat p_j-\log \hat q_j).
\end{equation}
 The latter can be compared to the quantiles of the limiting chi-squared distribution of $\hat D_K$ under the null hypothesis, as $n\to\infty$, to obtain critical values for the test statistic; we refer to Section~\ref{sec:known marginals} for the derivation of the asymptotic distribution.

In practice, we have i.i.d.~samples drawn from the original random vectors $\g{\tilde X}$ and $\g{\tilde Y}$, denoted as $\g{\tilde X}_1,\dots,\g{\tilde X}_n$ and $\g{\tilde Y}_1,\dots,\g{\tilde Y}_n$. Consequently, to obtain i.i.d.~samples drawn from $\g X$ and $\g Y$, the marginal distributions of the random vectors $\g{\tilde X}$ and $\g{\tilde Y}$ must be known. If the marginal distributions are unknown, they can be be estimated either parametrically or non-parametrically from the data. Since our focus is on testing extremal dependence, and in order to avoid parametric assumptions, we use empirical estimates. Write $\g{\tilde X}_i=(\tilde X_{i1},\dots, \tilde X_{id})$ and denote by $\hat F_j$ the empirical distribution function based on the samples $\{\tilde X_{ij}\}_{i=1}^n$. We define new samples $\g{\hat X}_i$, $i=1,\dots, n$, with entries
\begin{align}\label{X_pseudo}
  \hat X_{ij} = 1 / \{1 - \hat F_j(X_{ij})\} = \frac{n+1}{n+1-\mbox{rank}(\tilde X_{ij})}, \quad j = 1,\dots, d,
\end{align}
where $\mbox{rank}(\tilde X_{ij})$ denotes the rank of $\tilde X_{ij}$ in the sample $\{\tilde X_{kj}\}_{k=1}^n$. Similarly 
we define $\g{\hat Y}_i$, $i=1,\dots, n$ for the second sample.
The samples are called pseudo-observations since $\g{\hat X}_1, \dots ,\g{\hat X}_n$ are no longer independent. This is due to the fact that the marginal distributions are estimated from the same data set. In addition, the common marginal distribution is not exactly a standard Pareto distribution due to the deterministic values of the ranks.

We subsequently obtain the  \KL{} test statistic $\hat D_K$ by computing the empirical probabilities in~\eqref{p_estimate} using these pseudo-observations and then plugging them into the definition of the divergence in~\eqref{eq:KL_Multinomial2}. Because of the additional step of standardizing the marginal distributions empirical, the limiting distribution of $\hat D_K$ is more complicated and can in general only be assessed by a bootstrap procedure; see Section~\ref{sec:bootstrap} for details.

Algorithm~\ref{alg:KL} summarizes the different steps of our \KL{} divergence extremal dependence test.

\begin{algorithm}
  \caption{\KL{} divergence test}
  \label{alg:KL}
  
  \textbf{Input:} Two data sets $\g{\tilde X}_1,\dots, \g{\tilde X}_n$ and $\g{\tilde  Y}_1,\dots, \g{\tilde Y}_n$; a risk functional $r$; partitioning sets $A_1,\dots ,A_K$; number of exceedances $k_n$ to be used; significance level $\alpha \in (0,1)$.

  \textbf{Output:} The test statistic $\hat D_K$, the $p$-value and whether the null hypothesis is rejected.
 
  \begin{algorithmic}[1]
    \Procedure{\KL{} divergence test}{$k_n$, $\alpha$}
    \State Standardize the margins: 
    \begin{itemize}
      \item[a)] if the margins are known, use~\eqref{X_true} for both data sets;
      \item[b)] if the margins are unknown, obtain pseudo-observations through~\eqref{X_pseudo} for both data sets using empirical distribution functions and use these transformed samples.
    \end{itemize} 
    \State For each data set, select $k_n$ exceedances and compute probability estimates $\hat p_j$ and $\hat q_j$ according to~\eqref{p_estimate}.
    \State Compute the observed value $\hat D_K$ of the \KL{} divergence in~\eqref{eq:KL_Multinomial} by plugging in the $\hat p_j$ and $\hat q_j$. 
    \State Compute the $p$-value using the approximation of the null distribution:
    \begin{itemize}
      \item[a)] if the margins are known
      \[p = \mathbb P\left(\chi^2(K-1) > k_n \hat D_K/2\right); \]
      \item[b)] if the margins are unknown, the bootstrap procedure described in Section~\ref{sec:bootstrap} yields $B$ bootstrapped versions $\hat D^{(1)}_K, \dots, \hat D^{(B)}_K$ of the test statistic under $H_0$, and 
      \[p = \frac{1}{B} \sum_{b=1}^B \einsfun\{\hat D^{(b)}_K > \hat D_K\}. \]
    \end{itemize} 
    \State \textbf{Return:} Value of $\hat D_K$, the $p$-value and "reject $H_0$" if $p<\alpha$, and "not reject $H_0$" otherwise. 

    \EndProcedure
  \end{algorithmic}
\end{algorithm}

\subsection{Theoretical guarantees with known marginal distributions}\label{sec:known marginals}

In this section we derive theoretical guarantees for the asymptotic distribution of the \KL{} divergence test statistic $\hat D_K$ defined in Section~\ref{sec:method} as the sample size $n\to\infty$. We first work under the assumption of known marginal distributions such that {we can obtain i.i.d.~samples drawn from the distributions of $\g X$ and $\g Y$, which are used to construct the test statistic $\hat D_K$.} We assume that $\g X$ and $\g Y$ satisfy the MRV condition~\eqref{mevd}, with tail indices $\alpha_X$ and $\alpha_Y$ and exponent measures $\nu_X$ and $\nu_Y$, respectively.

In order to show asymptotic properties of $\hat D_K$ we need to 
state some technical conditions on the risk functional, the possible partitions and the distributions of $\g X$ and $\g Y$.
The risk functional $r$ should be as in Section~\ref{sec:background} with the additional restriction that $\nu_X(\partial \Omega_r)=\nu_Y(\partial \Omega_r)=0$, where 
$\Omega_r=\set{\g x \in[0,\infty)^d: r(\g x)>1}$.
This assumption is satisfied if the exceedance distribution has a density, which is the case in most parametric models \citep[e.g.,][]{col1991, eng2014}.

In order to control the error of the probability estimates $\hat p_j$ compared to their theoretical limit, we can decompose
\begin{align*}
  \left| \hat p_j - \nu_X(A_j)/\nu_X(\Omega_r) \right| &\leq  \left|\hat p_j - P(\g X\in u_n^XA_j\mid r(\g X)>u_n^X)\right|\\
  & + \left|P(\g X\in u_n^XA_j\mid r(\g X)>u_n^X) -\nu_X(A_j)/\nu_X(\Omega_r) \right|.
\end{align*}
The second error term is the deterministic approximation bias related to the fact that the threshold $u_n$ is finite. This error typically appears in extreme value statistics and disappears as the threshold grows under the following standard second-order condition. We assume that there exits a second-order scale function $a_X$ such that, as $u\to\infty$, $a_X(u)\to 0$ and for any Borel set $B\subset [0,+\infty)^d\setminus [0,\epsilon)^d$ with $\nu_X(\partial B)=0$,
\begin{align}\label{mevd_2nd}
 \mathbb P(\g X/u \in  B|\norm{\g X}_\infty >u) -\nu_X(B) = O(a_X(u)).
\end{align}

In order to define a large class of admissible partitions $A_1,\dots, A_K$ of the set $\Omega_r$ we make the following assumption.
Let $\mathcal B(\Omega_r)$ denote all Borel subsets of $\Omega_r$ and let
\begin{align}\label{stable_sets}
  \mathcal A_r = \{B \subset \mathcal B(\Omega_r): t B \subset B \text{ for all } t>1\}
\end{align}
be all Borel subsets of $\Omega_r$ whose inflation lies again in the same set. 

\begin{assumption}\label{ass:sets}
  Let $r$ be a homogeneous and measurable risk functional. 
  Let $A_1,\dots, A_K$ be a partition of $\Omega_r = \{\g x \in [0,\infty)^d :r(\g x) > 1\}$ such that for all $j=1,\dots, K$ the set $A_j$ can be expressed using a finite number of the operations of union, intersection, and complementation on the sets in $\mathcal A_r$.
\end{assumption}

We can easily check that the examples in Section~\ref{sec:background} all satisfy this assumption. Indeed, the sets of the partition in Example~\ref{example: radius} are already contained in $\mathcal A_r$. In Example~\ref{example: max}, each of the sets $A_I$ can be obtained as a finite intersection of sets of the form $B_I= \{x\in\mathbb R^d: x_j>1, j\in I\} \in\mathcal A_r$.
Similarly, in Example~\ref{example: min}, the sets $A_I$ are finite intersections of sets of the form $B_I= \{x\in\mathbb R^d: x_j>2, j\in I\} \in\mathcal A_r$.
More generally, Assumption~\ref{ass:sets} is very mild and it is satisfied for most practically relevant examples.

The theorem below shows the asymptotic behavior of the test statistic $\hat D_K$ under both the null and the alternative hypotheses.

\begin{theorem}\label{thm_AD_hom}
  Suppose that for random vectors $\g X$ and $\g Y$, the condition \eqref{mevd_2nd} holds with exponent measures $\nu_X$ and $\nu_Y$, and second order scale functions $a_X$ and $a_Y$, respectively. Assume that Assumption~\ref{ass:sets} holds for the risk functional $r$ and partition $A_1,\dots, A_K$ of the set $\Omega_r$, with $\nu_X(\partial A_j)=\nu_Y(\partial A_j)=0$ for $j=1,\ldots,K$.
  Define $U_{X}(t)=\inf\set{u: \mathbb P(\norm{\g X}_\infty >u)\leq 1/t}$ and $U_{Y}$ accordingly. 
  Let $k_n$ be a sequence such that as $n\to\infty$, $k_n\to\infty$, $k_n/n \to 0$, $\sqrt{k_n} a_X\{U_X(n/k_n)\}  \to 0$ and $\sqrt{k_n} a_Y\{U_Y(n/k_n)\}  \to 0$.
  
  Under the null hypothesis $H_0$ in~\eqref{H0}, the Kullback--Leibler divergence converges in distribution to a chi-squared distribution with $K-1$ degrees of freedom, that is,
\begin{align}\label{rate_null_known}
\frac{k_n}{2} \hat D_K  \stackrel{d}{\to} \chi^2(K-1),\quad  n \to \infty.
\end{align}
  Under the alternative hypothesis that $p_{j} \neq q_{j}$ for some $j=1,\ldots, K$. Then we have that
$$
\sqrt{k_n} \suit{\hat D_K -D_K} \stackrel{d}{\to}  N(0, \sigma^2), \quad n \to \infty,
$$
where $D_K$ is defined in \eqref{eq:KL_Multinomial} and 
$$\sigma^2=\sum_{j=1}^K D_{1j}^2p_{1}(1-p_{1})+D_{2j}^2q_{1}(1-q_{1})-2\sum_{1\leq j_1<j_2\leq K}\suit{D_{1j_1}D_{1j_2}p_{j_1}p_{j_2}+D_{2j_1}D_{2j_2}q_{j_1}q_{j_2}}$$
with
$$D_{1j}=\frac{\partial D_K}{\partial p_j}=\log \frac{p_j}{q_j}+1-\frac{q_j}{p_j}, \mbox{\ and\ }D_{2j}=\frac{\partial D_K}{\partial q_j}=\log \frac{q_j}{p_j}+1-\frac{p_j}{q_j},$$
for $1\leq j\leq K$.
\end{theorem}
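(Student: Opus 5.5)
The argument has two parts. First we prove a joint central limit theorem for the vectors $\hat{\g p}=(\hat p_1,\dots,\hat p_K)$ and $\hat{\g q}$. Then we apply the delta method: first-order under the alternative, second-order under the null.

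\textbf{Step 1: asymptotic normality of $\hat{\g p}$.} Write $\hat p_j$ as the tail empirical measure evaluated on $R^X_{n-k_n,n}A_j$. Set $u=U_r(n/k_n)$, where $U_r$ is the $1-k_n/n$ quantile of $r(\g X)$, and define the tail empirical process
\[
T_n(B)=\frac{1}{k_n}\sum_{i=1}^n\einsfun\{\g X_i\in uB\}.
\]
Consider the class $\{tA_j: t \text{ near } 1\}$. Under Assumption~\ref{ass:sets} each $A_j$ is a finite Boolean combination of sets in $\mathcal A_r$, which are monotone under inflation. This makes the class a VC-type class, so the process $\sqrt{k_n}\{T_n(tB)-E T_n(tB)\}$ converges, uniformly in $t\in[1/2,2]$ and over the finitely many sets involved, to a Gaussian process $W$ with covariance $\nu(B_1\cap B_2)$ (after normalizing by $\nu_X(\Omega_r)$). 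This follows from standard tail empirical process results, for instance Einmahl's or de Haan--Ferreira Chapter~7.

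Next, $R^X_{n-k_n,n}/u\to 1$ at rate $\sqrt{k_n}$. Homogeneity gives $\nu(tB)=t^{-1}\nu(B)$, and $\hat p_j$ is a ratio whose numerator and denominator are both evaluated at the same random threshold. The random-threshold effect therefore enters as the same multiplicative factor $t^{-1}$ in both and cancels to first order. By asymptotic equicontinuity the remaining fluctuation vanishes.

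The bias $\sqrt{k_n}\{p_j(u)-p_j\}$ is $O(\sqrt{k_n}\,a_X(U_X(n/k_n)))\to0$ by \eqref{mevd_2nd}, via Proposition~\ref{prop: values of p} and the fact that $u\asymp U_X(n/k_n)$. Together these give
\[
\sqrt{k_n}(\hat{\g p}-\g p)\td N(0,\Sigma_p),\qquad \Sigma_p=\mathrm{diag}(\g p)-\g p\g p^\top,
\]
the multinomial covariance. Equivalently, conditionally on the exceedances, the $k_n$ points behave like a multinomial sample. The same holds for $\hat{\g q}$, and the two limits are independent because the samples are independent. I expect this step, specifically the uniform control of the random threshold $R^X_{n-k_n,n}$ (the VC/equicontinuity argument), to be the main obstacle.

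\textbf{Step 2: the alternative.} $D_K$ is smooth near $(\g p,\g q)$ because all entries lie in $(0,1)$. The first-order delta method gives
\[
\sqrt{k_n}(\hat D_K-D_K)\td \sum_j D_{1j}Z_j+D_{2j}Z'_j,
\]
where $Z$ and $Z'$ are the two independent Gaussian limits from Step~1. Its variance is $\g D_1^\top\Sigma_p\g D_1+\g D_2^\top\Sigma_q\g D_2$, which is the stated $\sigma^2$.

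\textbf{Step 3: the null.} At $\g p=\g q$ both gradients vanish, so we expand to second order. With $\log\hat p_j-\log\hat q_j=(\hat p_j-\hat q_j)/p_j+o_P(k_n^{-1/2})$ we get
\[
\hat D_K=\sum_j(\hat p_j-\hat q_j)^2/p_j+o_P(k_n^{-1}).
\]
Now $\sqrt{k_n}(\hat{\g p}-\hat{\g q})\td N(0,2\Sigma_p)$. Hence $\frac{k_n}{2}\hat D_K\td \g V^\top\mathrm{diag}(\g p)^{-1}\g V$ with $\g V\sim N(0,\Sigma_p)$. This is the classical Pearson statistic limit: $\mathrm{diag}(\g p)^{-1/2}\Sigma_p\,\mathrm{diag}(\g p)^{-1/2}=I-\sqrt{\g p}\sqrt{\g p}^\top$ is a projection of rank $K-1$, so the limit is $\chi^2(K-1)$.
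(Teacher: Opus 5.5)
Your proposal is correct and follows essentially the same route as the paper. Both use a tail empirical process over the VC class $\{tA_j\}$ (the paper invokes Lemma~3.1 of \cite{einmahl1997}) to get the multinomial limit $N_j-p_j\sum_l N_l$, then a first-order expansion of $D_K$ under the alternative and a second-order expansion with the Pearson $\chi^2(K-1)$ identification under the null. The differences are only technical: you absorb the random threshold through the exact self-normalization $\hat p_j=T_n(\hat tA_j)/T_n(\hat t\Omega_r)$ plus equicontinuity, so only $\hat t\tp 1$ is needed, where the paper uses Vervaat's lemma and the delta method; this still requires the bias bound uniformly over thresholds $ut$ with $t$ near $1$, which the paper also assumes. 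You also use the projection argument instead of inverting the covariance explicitly, and your variance $\g D_1^\top\Sigma_p\g D_1+\g D_2^\top\Sigma_q\g D_2$ is the intended $\sigma^2$, whose display has the typo $p_1(1-p_1)$, $q_1(1-q_1)$ in place of $p_j(1-p_j)$, $q_j(1-q_j)$.
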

The theorem leads to the asymptotic justification of the $p$-value used in Algorithm~\ref{alg:KL} under the null hypothesis and known marginals; see 5a) in the Algorithm. Under the alternative, since $\hat D_K\tp D_K>0$, we get that $\frac{k_n}{2}\hat D_K\tp \infty$. Therefore, the $p-$value obtained from the Algorithm \ref{alg:KL}, 5a) converges to 1 as $n\to\infty$. This shows the asymptotic power of the KL divergence test.

\subsection{Theoretical guarantees with unknown marginal distributions}\label{sec:unknown marginals}

In this section we consider the case that the marginal distributions of $\g{\tilde X}$ and $\g{\tilde Y}$ are unknown. As {discussed} in~\eqref{X_pseudo}, the data {are first} normalized to a common approximate Pareto marginal scale {by using empirical distributions for each marginal}. 
We then apply our KL divergence test to the transformed pseudo observations.

Using the pseudo-observations has an impact on the asymptotic behavior of the probability estimates $\hat p_j$ and $\hat q_j$, which eventually affects the asymptotic behavior of the KL divergence statistic. 
Assuming generically that the estimators $\hat p_j$ and $\hat q_j$ are asymptotically normal, the following theorem shows the asymptotic behavior of the test statistic $\hat D_K$ under both null and alternative hypothesis. While the limiting distribution does not have an explicit form as in the previous section, the asymptotic theory provides a theoretical justification for applying a subsample bootstrap procedure to obtain critical values of the test; see Section~\ref{sec: sim} for details.

\begin{theorem} \label{thm_generic}
  For a suitable $k_n$, assume that as $n\to\infty$,
  $$\sqrt{k_n}(\hat p_j-p_j)=N_j^X+o_P(1) \text{\ \ and\ \ } \sqrt{k_n}(\hat q_j-q_j)=N_j^Y+o_P(1), $$
  for all $1\leq j\leq K$, where $(N_1^X,\ldots, N_K^X)$ and  $(N_1^Y,\ldots, N_K^Y)$ are two independent normally distributed random vectors with mean zero and covariance matrices $\mathbf{\Sigma}^X$ and $\mathbf{\Sigma}^Y$. 

  Under the null hypothesis $H_0$ in~\eqref{H0}, the Kullback--Leibler divergence converges in distribution 
\begin{align}\label{rate_null}
\frac{k_n}{2} \hat D_K  \td \sum_{j=1}^K \frac{1}{2p_j}(N^X_j-N^Y_j)^2, \quad n\to\infty.
\end{align}

Under the alternative hypothesis that $p_{j} \neq q_{j}$ for some $1\leq j\leq K$, denote by $D_K$ the population version in~\eqref{eq:KL_Multinomial} with $p_{j},q_{j}$. Then we have that
$$
\sqrt{k_n} \suit{\hat D_K -D_K} \td \sum_{j=1}^K\suit{D_{1j}N^X_j+D_{2j}N^Y_j}, \quad n\to\infty,
$$
with $D_{ij}$ defined as in Theorem \ref{thm_AD_hom} for $i=1,2$ and $1\leq j\leq K$. 
\end{theorem}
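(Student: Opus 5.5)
The plan is a delta-method argument applied to the smooth map $\phi(\g p,\g q)=\sum_{j=1}^K (p_j-q_j)(\log p_j-\log q_j)$. This map is well defined near $(\g p,\g q)$ because all $p_j,q_j\in(0,1)$. First, the assumed expansions give $\hat p_j\tp p_j$ and $\hat q_j\tp q_j$. Hence, with probability tending to one, every $\hat p_j,\hat q_j$ lies in a compact neighbourhood bounded away from $0$, so that $\hat D_K=\phi(\hat{\g p},\hat{\g q})$ and Taylor expansions with uniformly bounded remainders are legitimate. Independence of $(N^X_j)_j$ and $(N^Y_j)_j$, together with marginal convergence of the two $K$-vectors, is used to obtain joint convergence: $\sqrt{k_n}(\hat{\g p}-\g p,\hat{\g q}-\g q)\td (\g N^X,\g N^Y)$. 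The $o_P(1)$ representation is in fact already on a common probability space, so the remainder vanishes and the continuous mapping theorem applies.

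\emph{Alternative.} The partial derivatives are $\partial\phi/\partial p_j=D_{1j}$ and $\partial\phi/\partial q_j=D_{2j}$, as in Theorem~\ref{thm_AD_hom}. A first-order Taylor expansion gives
\[
\sqrt{k_n}(\hat D_K-D_K)=\sum_{j=1}^K\bigl\{D_{1j}\sqrt{k_n}(\hat p_j-p_j)+D_{2j}\sqrt{k_n}(\hat q_j-q_j)\bigr\}+O_P\bigl(\sqrt{k_n}\,\|(\hat{\g p}-\g p,\hat{\g q}-\g q)\|^2\bigr).
\]
The remainder is $O_P(k_n^{-1/2})=o_P(1)$. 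Substituting the expansions then yields the stated limit $\sum_j(D_{1j}N^X_j+D_{2j}N^Y_j)$.

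\emph{Null hypothesis.} Under $H_0$ we have $p_j=q_j$, so $D_{1j}=D_{2j}=0$ and the first-order term vanishes; this is why the scaling becomes $k_n$ rather than $\sqrt{k_n}$. I would avoid a full second-order Taylor expansion by writing each summand directly. The mean value theorem gives $\log\hat p_j-\log\hat q_j=(\hat p_j-\hat q_j)/\xi_j$, where $\xi_j$ lies between $\hat p_j$ and $\hat q_j$, and therefore $\xi_j\tp p_j$. Hence
\[
\frac{k_n}{2}\hat D_K=\sum_{j=1}^K\frac{1}{2\xi_j}\bigl\{\sqrt{k_n}(\hat p_j-p_j)-\sqrt{k_n}(\hat q_j-q_j)\bigr\}^2,
\]
using $p_j=q_j$ in the subtraction. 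The curly bracket converges jointly in $j$ to $N^X_j-N^Y_j$, and $1/(2\xi_j)\tp 1/(2p_j)$. Slutsky's lemma and the continuous mapping theorem then give \eqref{rate_null}.

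The only delicate point is ensuring the logarithms are defined, that is, $\hat p_j,\hat q_j>0$. This holds on events of probability tending to one, which suffices for convergence in distribution. Joint convergence is the other potential subtlety, but it follows from the stated independence of the limits and the $o_P(1)$ coupling. Otherwise the argument is routine. As a consistency check, in the known-margin case one has $\Sigma^X=\Sigma^Y=\mathrm{diag}(\g p)-\g p\g p^\top$, and the limit in \eqref{rate_null} reduces to the $\chi^2(K-1)$ of Theorem~\ref{thm_AD_hom}.
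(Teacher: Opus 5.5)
Your proposal is correct and follows the paper's route: a first-order delta method with partial derivatives $D_{1j},D_{2j}$ under the alternative, and a second-order argument under $H_0$, where $D_{1j}=D_{2j}=0$ so the linear terms vanish. The only difference is in the null case, and it is cosmetic. You use the mean value theorem to obtain the exact identity $\hat D_K=\sum_j(\hat p_j-\hat q_j)^2/\xi_j$, whereas the paper does a second-order Taylor expansion with Hessian entries $2/p_j$ and $-2/p_j$ plus an $o_P$ remainder. Your version is slightly cleaner but yields the same limit.
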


The conditions in Theorem \ref{thm_generic} are generic and the limit distributions under both the null and alternative hypothesis remain inexplicit and dependent on the unknown parameters of extremal dependence.
In particular, even under the  null distribution, the asymptotic limit is no longer a simple chi-squared distribution as in the case of known marginal distributions. In Section~\ref{sec:bootstrap}, we therefore propose a bootstrap procedure to obtain critical values for the test under the null hypothesis.

Below we verify for the three examples from the previous sections that all assumptions of the above theorem are satisfied, under certain regularity conditions, for the empirical estimators $\hat p_j$ and $\hat q_j$ based on the pseudo-observations in~\eqref{X_pseudo}.

\begin{example}\label{example: max_2}
  Consider the risk functional $r(\g x)=\max(x_1, \dots, x_d)$ and
  the partition composed of sets $A_I=\{x\in\mathbb R^d: x_j>1, j\in I; x_j\leq 1, j\in I^c\}$ for any non-empty $I\subset \{1,\dots, d\}$; see Example~\ref{example: max}. The validity of the assumptions on the asymptotic normality in Theorem~\ref{thm_generic} of the empirical estimators $\hat p_j$ and $\hat q_j$ is related to the estimation of the tail dependence functions; see, e.g. Chapter 7 in \cite{deh2006}. We also provide a specific example for $d=2$ in the Appendix.
\end{example}

\begin{example}
  Consider the risk functional $r(\g x)=\min(x_1, \dots, x_d)$  and
  the partition composed of sets $A_I=\{x\in\mathbb R^d: x_j>2, j\in I; 1<x_j\leq 2, j\in I^c\}$ for any non-empty $I\subset \{1,\dots, d\}$; see Example~\ref{example: min}. Then the validity of the assumptions on the asymptotic normality in Theorem~\ref{thm_generic} of the empirical estimators $\hat p_j$ and $\hat q_j$ is also related to the estimation of the tail dependence function.
\end{example}

\begin{example}
  Consider a specific case for Example \ref{example: radius} in $d=2$ dimensions where $r$ is the $L_2$ norm $r(\g x)=\sqrt{x_1^2+x_2^2}$, accompanied with the angular component $\theta(\g x)=\arctan(x_2/x_1)$. Let $0=\theta_0<\theta_1<\ldots<\theta_K={\pi}/{2}$ be a sequence of angles. We divide the unit sphere $S_r$ into $K$ regions according to these angles, which then results into a partition of the set $\Omega_r=\set{\g x \geq 0: x_1^2 + x_2^2 >1}$  
  \[A_j=\set{\g x=(x_1,x_2): r(\g x)>1, \theta(\g x)\in(\theta_{j-1},\theta_{j}]}, \quad j=1,\ldots, K,\]
  which is admissible in the sense of Assumption~\ref{ass:sets}.  We can verify that the assumptions of Theorem~\ref{thm_generic} are satisfied for the empirical estimators $\hat p_j$ and $\hat q_j$ under some additional mild conditions; see  \cite{einmahl2001}.
\end{example}

In general, the asymptotic distributions of $\hat D_K$ under the null and alternative hypothesis do not have a simple closed form. Nevertheless, in the special case of $d=2$ in Example~\ref{example: max_2}, we derive such an explicit form; see Appendix~\ref{sec:max_fct} for details. In particular, it is interesting to note that the limit under the null distribution is a scaled chi-squared distribution whose scaling constants depend on the unknown exceedances distributions $\nu_X$ and $\nu_Y$.

\section{Simulations} \label{sec: sim}

We perform several simulation studies to assess the performance of the \KL{} divergence test for the equality of extremal dependence structures between two samples; see Algorithm~\ref{alg:KL} for details on the implementation. We investigate different aspects such as the sensitivity to the choice of the risk function and the partition, the effect of empirical marginal standardization and the type of alternative hypothesis.

We consider two different copula families. The first is the class of logistic distributions with parameter $\theta \in (0,1]$ \citep{gum1960}, which is part of the class of extreme value copulas \citep{seg2010}. Its distribution function on uniform margins is 
\begin{align}\label{log_cop}
  F_\theta^{\text{log}}(x_1,x_2) = \exp\left\{ - \left[ (-\log x_1)^{1/\theta} + (-\log x_2)^{1/\theta} \right]^\theta \right\}.
\end{align}
The second is the class of outer power Clayton copulas, which also has a parameter $\theta\in (0,1]$, and distribution function
\begin{align*}
  F_\theta^{\text{clay}}(x_1,x_2) = \left\{ \left[ (x_1^{-1} -1)^{1/\theta} + (x_2^{-1} -1)^{1/\theta} \right]^\theta + 1 \right\}^{-1}.
\end{align*}
It can be checked that after transformation to standard Pareto margins, both the logistic and the outer power Clayton copula are multivariate regular varying and satisfy~\eqref{mevd} with the same extended exceedance distribution
$$ \nu_\theta([\mathbf{0}, \boldsymbol{\infty}] \setminus [\mathbf{0},\g x]) \propto \left( x_1^{1/\theta} + x_2^{1/\theta} \right)^\theta, \qquad \g x \in [0,\infty)^2 \setminus \{0\}$$
Importantly, even if the dependence parameters $\theta$ are equal for both models, the two copulas $F_\theta^{\text{log}}$ and $F_\theta^{\text{clay}}$ are still different; they then only coincide in the extreme tails.

In the sequel, we use the R package \texttt{copula} \citep{copula2023} to generate independent samples from
the logistic and the outer power Clayton copula.

\subsection{Bootstrap null distribution}\label{sec:bootstrap}

As mentioned in Section~\ref{sec:unknown marginals}, when the marginal distributions are unknown, the null distribution of the test statistic $\hat D_K$ does not have a simple closed form. We therefore obtain critical values using a bootstrap procedure.
To this end, we randomly sample $n/2$ (assuming that $n$ is even) of the observations $\g X_1, \dots, \g X_n$ without replacement. We then compute the divergence between this sample and the remaining $n/2$ samples. We repeat this $B$ times, where $B$ is the desired number of bootstrap samples. By construction, this results in $B$ samples $\hat D^{(1)}(n/2), \dots, \hat D^{(B)}(n/2)$ of the test statistic $D_{K}(n/2)$ under the null hypothesis. Since the sample size is only half as large as the desired sample size $n$, we correct for this setting
\[ \hat D^{(i)}(n)  =\hat  D^{(i)}(n/2) / 2, \quad i = 1,\dots, B, \]
as suggested by the convergence rate in Theorem~\ref{thm_generic}.

We illustrate the effectiveness of the bootstrap approach in a simulation study, where $\g X_1, \dots, \g X_n$ are generated from the outer power Clayton copula with $n=2000$.
The divergence is computed for the risk functional $r(\g x) = \sqrt{x_1^2 + x_2^2}$ with partitions as in Example~\ref{example: radius} comprised of $K=4$ sets, and the number of exceedances is set to $k_n = 200$.
To obtain the bootstrap null distribution, we resample the data as described above with $B=1000$. To obtain samples from the true null distribution, we simulate $1000$ new data sets from the same outer power Clayton distribution and compute the divergences.
The left-hand side of Figure~\ref{fig:histo} compares the histograms of the bootstrap and the true null distribution, properly normalized according to~\eqref{rate_null_known}, when the margins are assumed to be known. We see that they match well. They also agree nicely with the density of the theoretical chi-squared limit (black line). When the margins are unknown and have to be normalized empirically, we still know the rate of convergence by~\eqref{rate_null}, but the limiting null distribution does no longer agree with the chi-squared distribution. Also in this case, the bootstrap samples approximate well the null distribution. Therefore, critical values can be computed from this bootstrap distribution instead of the chi-squared quantiles. 

\begin{figure}[tb!]
	\centering
	\includegraphics[width=.9\linewidth]{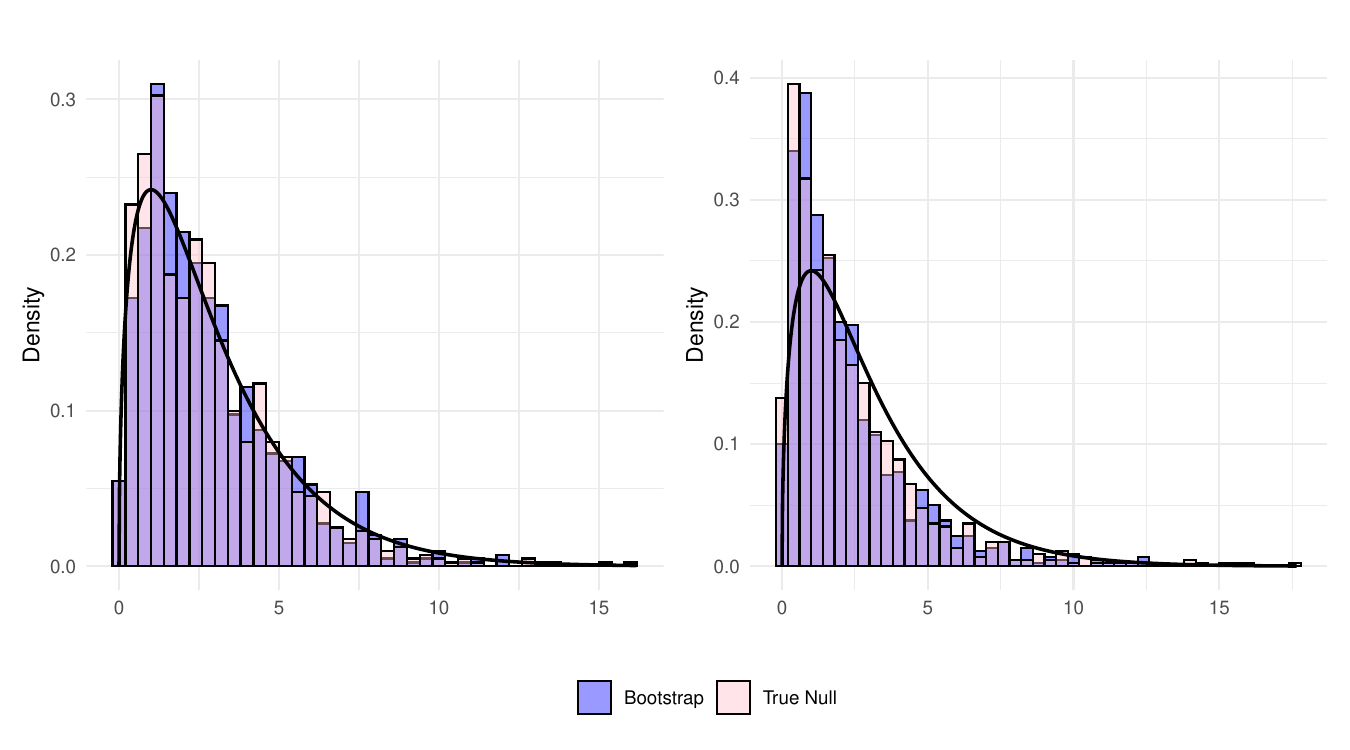}
	\caption{Histograms of bootstrap distribution (blue) and true null distribution (pink) together with the density of a chi-squared distribution with $K-1=3$ degrees of freedom, for the situation when the margins are known (left), and when the margins are unknown and have to be normalized empirically (right).}
	\label{fig:histo}
\end{figure}

\subsection{Power of the test}

We next investigate the power of our test in different 
scenarios. We generate $n=2000$ independent samples $\g X_1, \dots, \g X_n$ and $\g Y_1, \dots, \g Y_n$ from the outer power Clayton copula with parameters $\theta_X$ and $\theta_Y$, respectively.
 We compute the test statistic $\hat D_K$ in~\eqref{eq:KL_Multinomial} for two risk functionals $r(\g x) = \max(x_1,x_2)$ and $r(\g x) = \sqrt{x_1^2 + x_2^2}$ with partitions as in Examples~\ref{example: max} and~\ref{example: radius} comprised of $K=3$ and $K=5$ sets, respectively; see also Figure~\ref{fig:risks}. For the threshold $u_n$, we choose a sequence of different numbers of upper order statistics $k_n$ to be used as exceedances of the risk functional $r$.

We start with the case of known margins where standardization~\eqref{X_true} can be used.
Theorem~\ref{thm_AD_hom} then states that the asymptotic distribution of the test statistic is $\chi^2(K-1)$, where $K$ is the number of sets in the partition. 
The top left panel of Figure~\ref{fig:clay_clay_H0} shows the summary of
a simulation study with 500 repetitions for the case $\theta_X = \theta_Y = 0.45$, that is, when the null hypothesis~\eqref{H0} holds.
The solid and dashed blue lines represent the mean of the 500 KL test statistic values $\hat D_K$ and the corresponding pointwise empirical $5\%$ and $95\%$ quantiles for each $k_n$, respectively. The critical values of the $\chi^2(K-1)$ distribution at level $95\%$ as a function of the number of exceedances $k_n$ are shown by the dashed black line. As expected by the theory, approximately $95\%$ of the realizations of the test statistic stay below the critical level. This is also confirmed by the orange line, representing the percentage of times when $H_0$ is rejected.
Throughout all values of $k_n$, the rejection rate is approximately $5\%$, showing that the test has the desired level.

The top center panel of Figure~\ref{fig:clay_clay_H0} shows the same figure as before, but now for data simulated under the alternative hypothesis that $0.45 = \theta_X \neq \theta_Y = 0.55$. We see ability of the test to distinguish dependence structures as most of the $\hat D_K$ values fall above the critical value. Even though the dependence structures of $\g X$ and $\g Y$ are not too different, the orange line shows that the rejection probability and thus the power of the test is high.

An even more interesting situation occurs if we simulate the samples $\g X_1, \dots, \g X_n$ as before from the outer power Clayton copulas, but the samples $\g Y_1, \dots, \g Y_n$ of the second population are generated from the logistic distribution~\eqref{log_cop}. In particular, we set the dependence parameters of both models to the same value $\theta_X = \theta_Y = 0.45$.
The null hypothesis~\eqref{H0} then holds true since the extremal dependence structures coincide. In the top right panel of Figure~\ref{fig:clay_clay_H0} we observe that the test is rejected for large values of $k_n$ (low thresholds $u_n$), and no longer rejected for small enough $k_n$. This shows that the test focuses on extremal dependence; this also agrees with the theory that requires that $k_n/n \to 0$.

We repeat all simulations for the situation when the marginal distributions are unknown and have to normalized empirically according to~\eqref{X_pseudo}. We use critical values obtained by the bootstrap procedure described in Section~\ref{sec:unknown marginals}.  The bottom row of Figure~\ref{fig:clay_clay_H0} shows that the results remain similar to the case of known margins, confirming that the bootstrap procedure is effective.

We repeat all simulations for
the maximum risk $r(\g x) = \max(x_1,x_2)$, 
which has $K=3$ sets.
Figure~\ref{fig:clay_clay_H0_max}
in Appendix~\ref{app:simu} shows qualitatively similar results,
but the power of the test is generally
lower than that for the Euclidean risk
functional with $K=5$ sets.

\begin{figure}[htbp]
	\centering
	\hspace*{-2.7em}\includegraphics[width=0.38\linewidth]{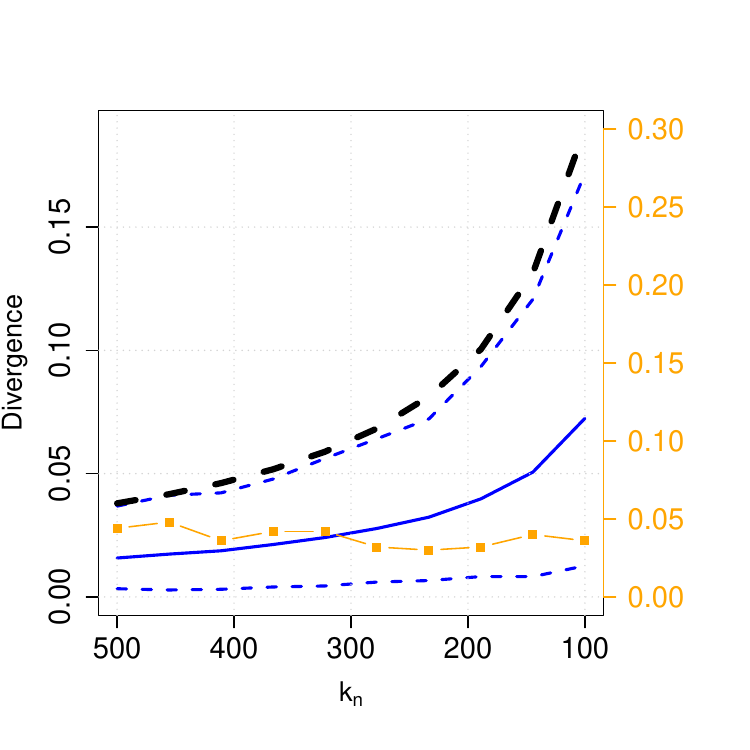}
  \hspace*{-1.8em}\includegraphics[width=0.38\linewidth]{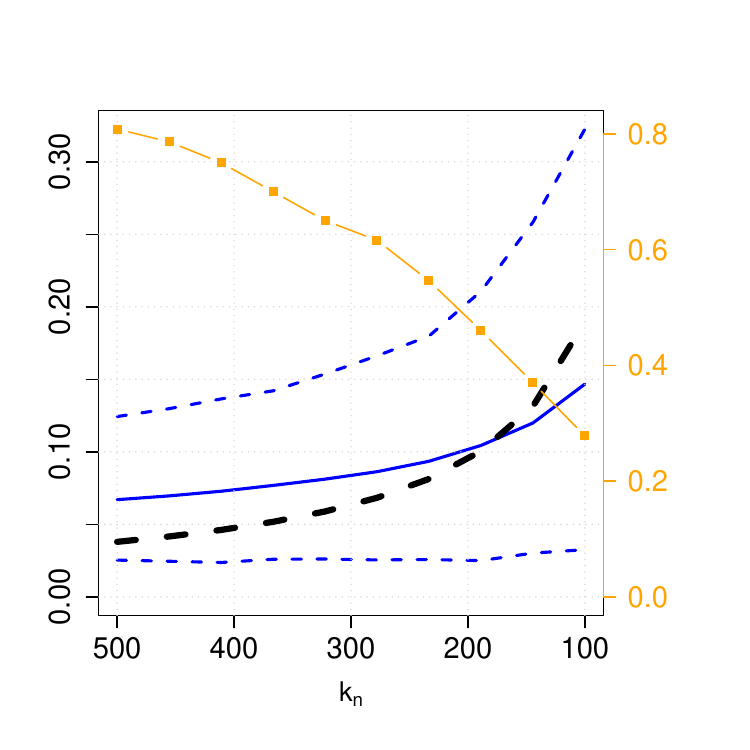}
	\hspace*{-1.8em}\includegraphics[width=0.38\linewidth]{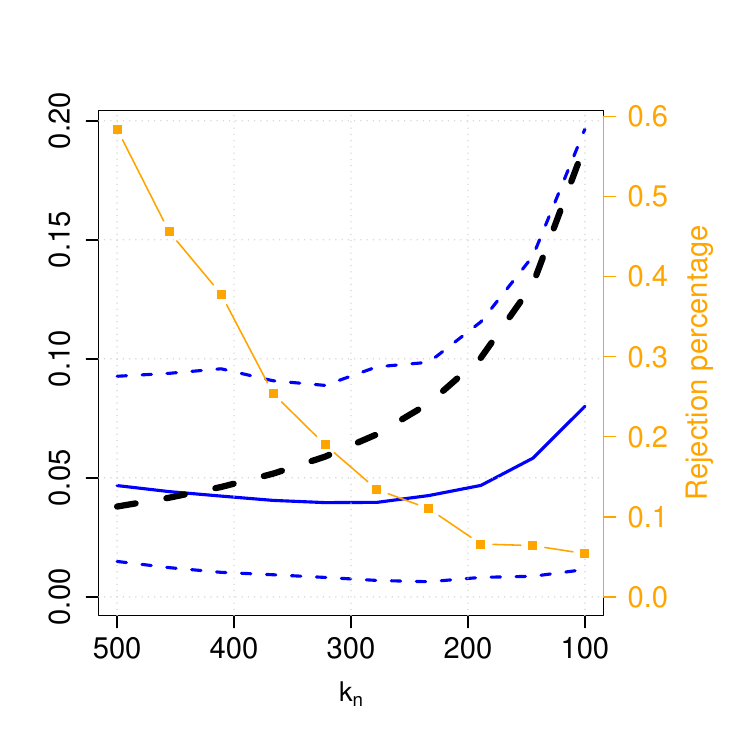}
 \hspace*{-3em}\\ \vspace*{-4.5em}%
	\hspace*{-2.7em}\includegraphics[width=0.38\linewidth]{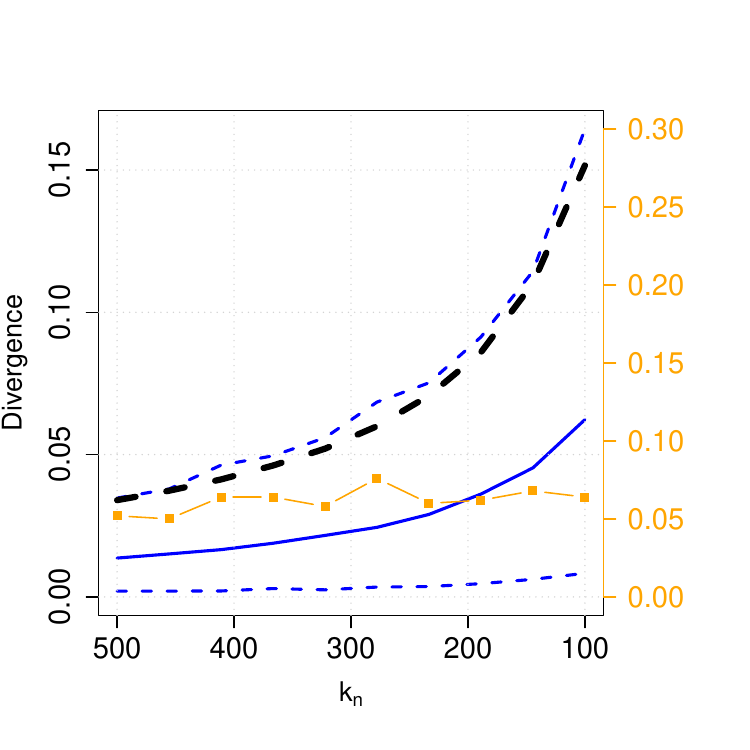}
  \hspace*{-1.8em}\includegraphics[width=0.38\linewidth]{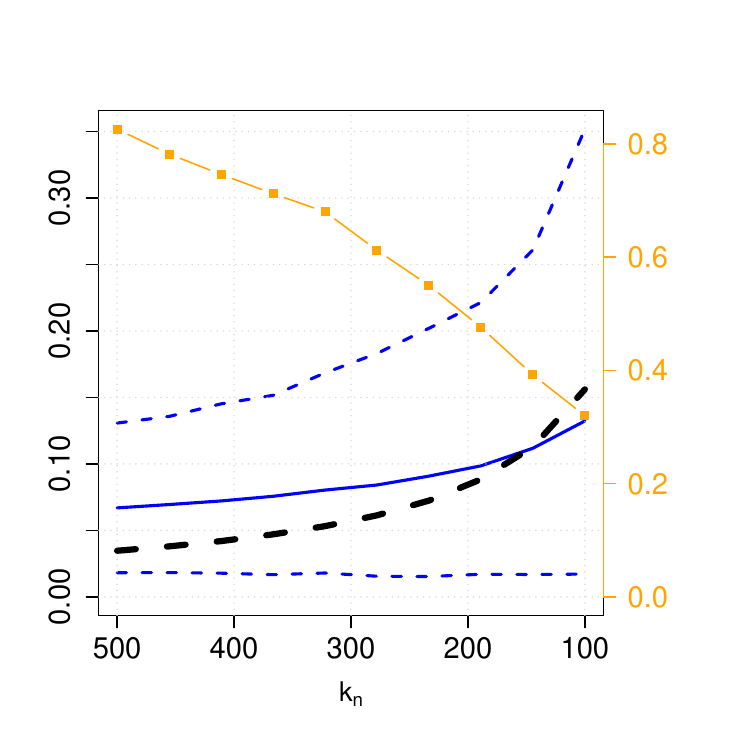}
	\hspace*{-1.8em}\includegraphics[width=0.38\linewidth]{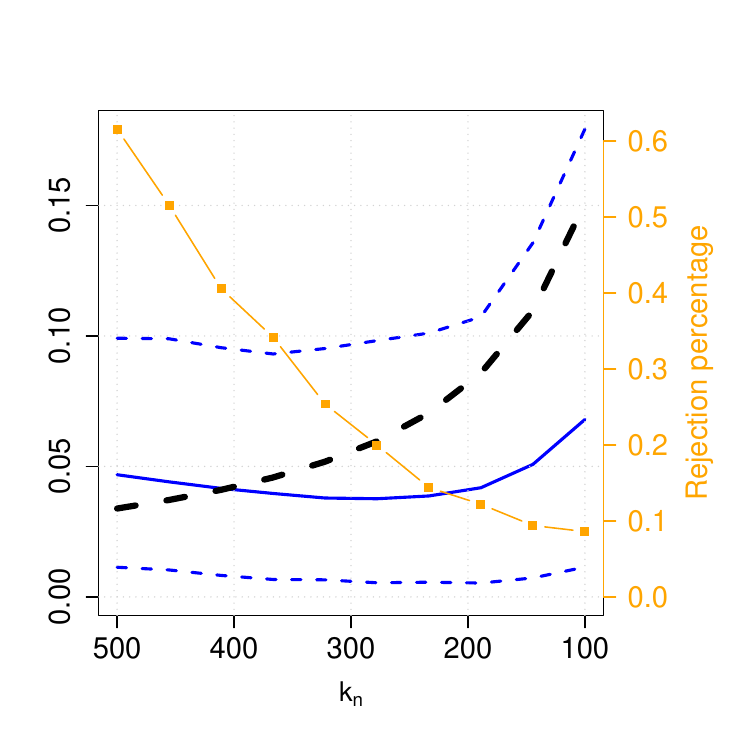}
 \vspace*{-2em}
  \caption{Mean (solid blue line) and empirical $5\%$ and $95\%$ quantiles (dashed blue lines) of 500 samples of KL test statistic $\hat D_K$ based on Euclidean risk functional with $K=5$ sets for a range of exceedances $k_n$
  together with the critical values (black dashed line) at level $95\%$ and rejection percentages (orange line). Top and bottom rows show results for known and unknown margins, respectively. 
   The two samples are generated from the same distribution (left), from distributions with different extremal dependence structure (center), and from different distributions with same extremal dependence structure (right).}
 \label{fig:clay_clay_H0}
\end{figure}

\subsection{The role of $K$}

We now study the effect of the number $K$ of sets in the partitions for the example of  the Euclidean risk functional $r(\g x) = \sqrt{x_1^2 + x_2^2}$ with partitioning as in Example~\ref{example: radius}. As an additional comparison, we also use the sum risk functional $r(\g x) = |x_1| + |x_2|$ with a similar partitioning along the angles and different numbers of sets. As a baseline, we consider the maximum risk functional $r(\g x) = \max(x_1,x_2)$ from Examples~\ref{example: max}, which has a constant number of sets $K=3$.
First, we simulate $n=5000$ samples from two outer power Clayton copulas with known margins and different dependence parameters $0.45 = \theta_X \neq \theta_Y = 0.55$. For $k_n=200$ exceedances, the left-hand side of Figure~\ref{fig:nbsets} shows the rejection probabilities over 500 repetitions as a function of $K$ for the different risk functionals. We first observe that both angular partitionings cannot distinguish the two distributions for $K=2$ sets. The reason is that $\g X$ and $\g Y$ are both symmetric and therefore an (approximately) equal number of points fall in the upper and lower halves. When $K$  increases, the power of the test increases rapidly and stabilizes above for 4 or 5 sets. Importantly, it exceeds substantially the power of the simple test based on the maximum risk. This shows that despite its simplicity, our method is sufficiently flexible to improve over more basic statistical tests.        
The choice of $K$ amounts to a bias-variance trade-off. For too small $K$ there might be a bias in comparing the two extremal dependence structures in $\g X$ and $\g Y$ since the difference might not be visible on the few sets. If $K$ is too large, then every set contains only few observations and there is a high variability in estimating the divergence $\hat D_K$.

Secondly, using the R package \texttt{grahpicalExtremes} \citep{graphicalExtremes2024} following the exact simulation method in \cite{dom2016}, we generate $\g X$ from the asymmetric Dirichlet distribution introduced in \cite{col1991}, and compare it to samples of~$\g Y$ from the outer power Clayton copula. We choose the parameters of each model such that they have the same extremal correlation $\chi_X = \chi_Y$ \citep{col1999, sch2003}, a popular summary statistic for extremal dependence. In this case, $H_0$ does not hold since the extremal dependence structures differ in terms of their symmetry properties.
For the maximum risk functional, we can rewrite the test statistics $\hat D_K$ as a simple function of the extremal correlation; see Appendix~\ref{extremal_correlation}. Since both samples have the same $\chi$ coefficient, the right-hand side of Figure~\ref{fig:nbsets} shows that the maximum risk cannot distinguish the two extremal dependence structure (even though they are different).
Both angular risks detect this difference in symmetry already for a small number of sets. We see also here an increasing performance, even up to $K=8$ sets.

\begin{figure}[tb!]
	\centering
	\hspace*{-2em}\includegraphics[width=0.44\linewidth]{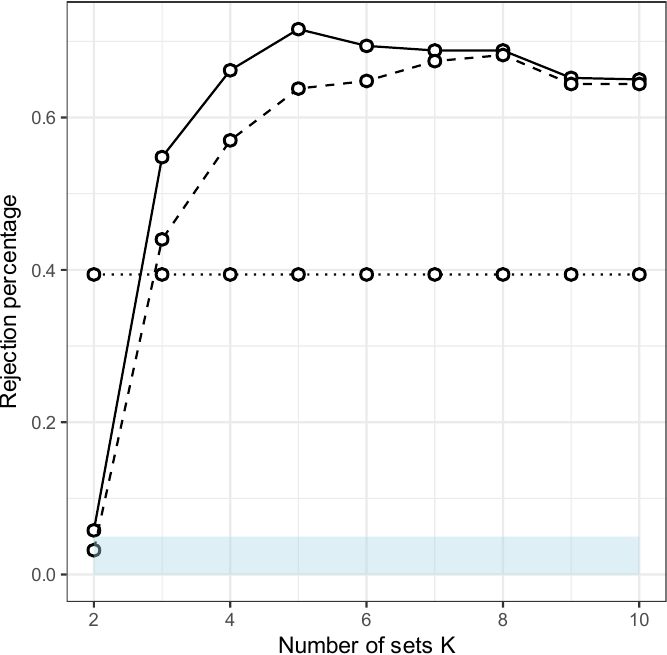}
  \hspace*{2em}\includegraphics[width=0.44\linewidth]{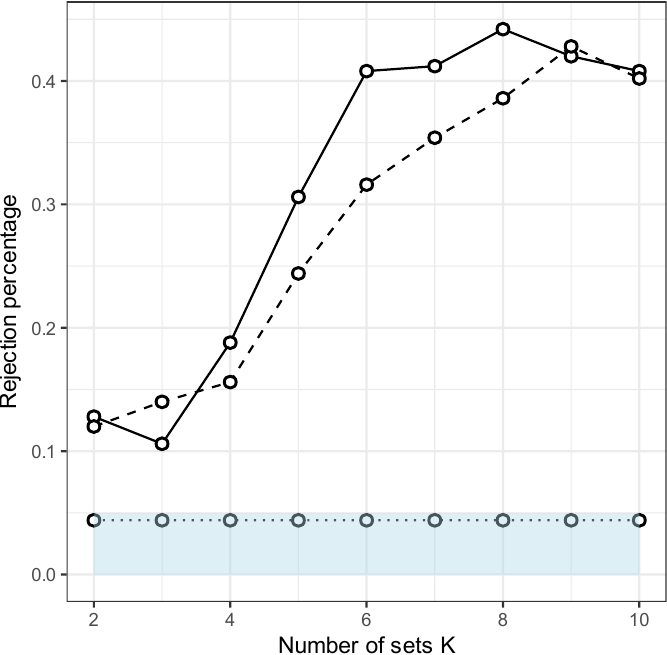}
  \caption{Rejection percentages of $H_0$ for 500 simulations of Clayton vs Clayton (left) and Dirichlet vs Clayton (right) for different choices of risk functionals: the maximum risk (dotted line), the Euclidean norm (solid line) and the sum risk (dashed line). Blue shaded area indicates significance level.}
	\label{fig:nbsets}
\end{figure}

\section{Application}\label{sec: Application}
During the last twenty years, some national weather services have consistently measured  sub-hourly precipitation at  a country scale  with  reliable networks of  weather  gauges. Such time series  are important for understanding and predicting flash floods, urban drainage issues, and assessing the impacts of climate change on extreme rainfall events. 
A key aspect is to understand the relationship between precipitation intensity, duration, and frequency. Visually, an intensity-duration-frequency (IDF) curve can be obtained for a single location and this curve provides a commonly used tool for the design of hydrological structures \citep{Ulrich20}.
Statistically, IDF curves can be viewed as a summary of the marginals behaviors of aggregated rainfall  at different time scales (minutes, hours, weeks, etc.). Such marginals are fitted by parametric models such as the generalized extreme value \citep{Ulrich20} or the extended Pareto distribution \citep{haruna2023modeling}. 
An important aspect is the role of the seasonal cycle: 
storm intensity in Europe, for instance, varies with the season due to shifts in atmospheric conditions and temperature differences between seasons. Specifically, warmer temperatures in summer and fall can lead to more intense storms, particularly in the form of increased rainfall. Conversely, winter storms are likely  to  be driven by cold air masses interacting with warmer air from the Atlantic. 
For Germany, \cite{ulrich2021modeling} modeled how  seasonal variations of extreme
rainfall change according to  different aggregation timescales.
Again, such studies focused on marginal changes, but it would be of hydrological interest to assess if the seasonal cycle also impacts the extremal dependence among different aggregation scales.  
We will show in this section that our KL divergence offers a simple tool to answer such a question.

We study rainfall time series from the French weather service M\'et\'eo-France that provides a public database\footnote{\url{https://meteo.data.gouv.fr/datasets/donnees-climatologiques-de-base-6-minutes/}} of 6-minute rainfall recordings over the period 2006--2023. 
To avoid short  sample lengths, we focus here on daily maxima of 6-minute and hourly rainfall for four seasons:  winter (DJF), spring (MAM), summer (JJA) and fall (SON); 
see the bivariate samples in Figure~\ref{fig:bordeaux} for the city of Bordeaux in winter (blue) and spring (orange). 
To compare extremal dependence between heavy rainfall at these two time scales,
Figure~\ref{fig:bordeaux4sets} shows our test statistic $\hat D_K$ in~\eqref{eq:KL_Multinomial2} as a function of the number of exceedances $k_n$ for all combinations of seasons;
here we fix $K=4$, but the results are similar with other choices.
\begin{figure}[tb]
\centering
	\hspace*{-2.7em}\includegraphics[trim={0cm 2cm 0 6cm},clip,width=.38\linewidth]{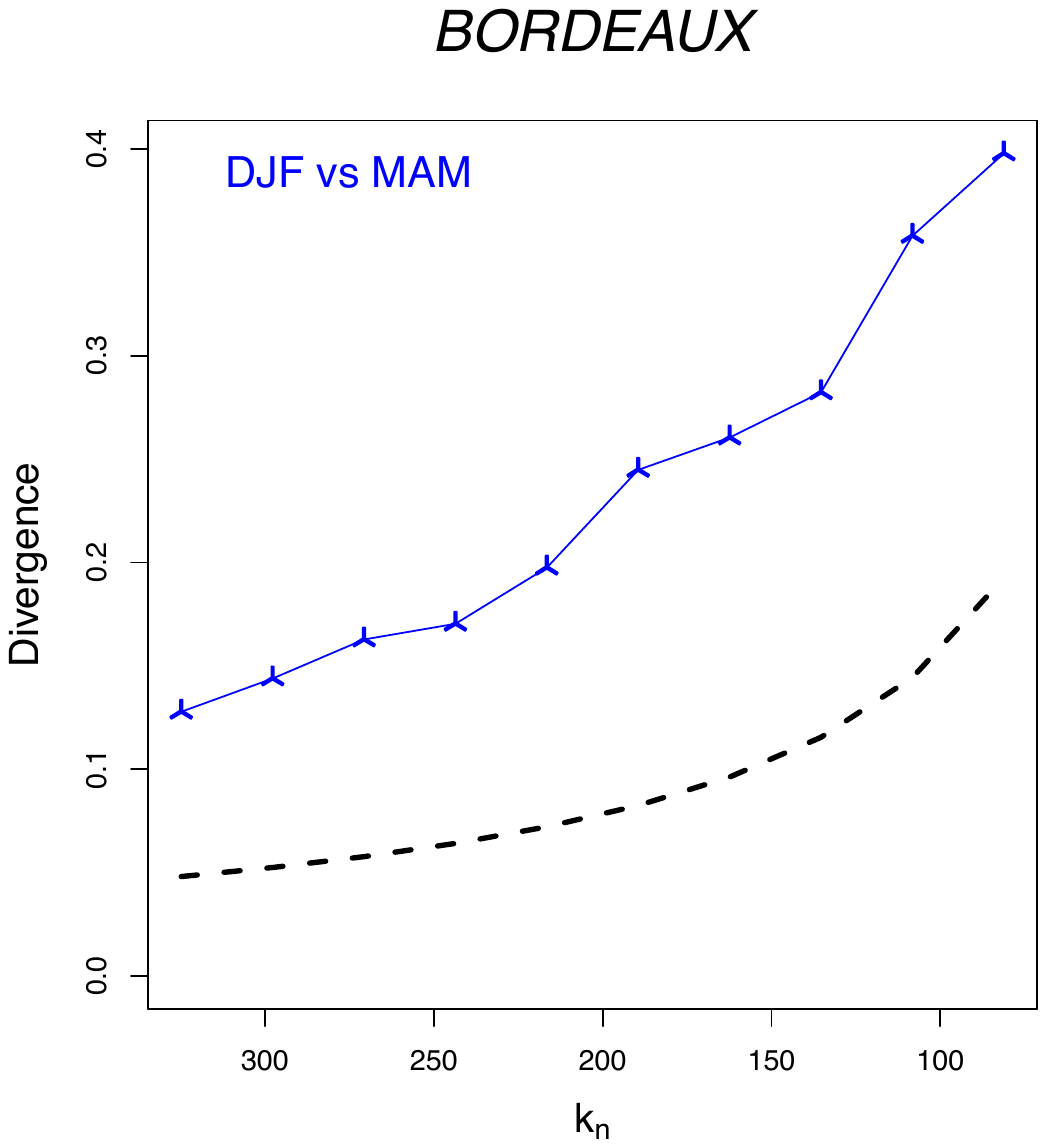}
  \hspace*{-1.8em}\includegraphics[trim={0cm 2cm 0 6cm},clip,width=0.38\linewidth]{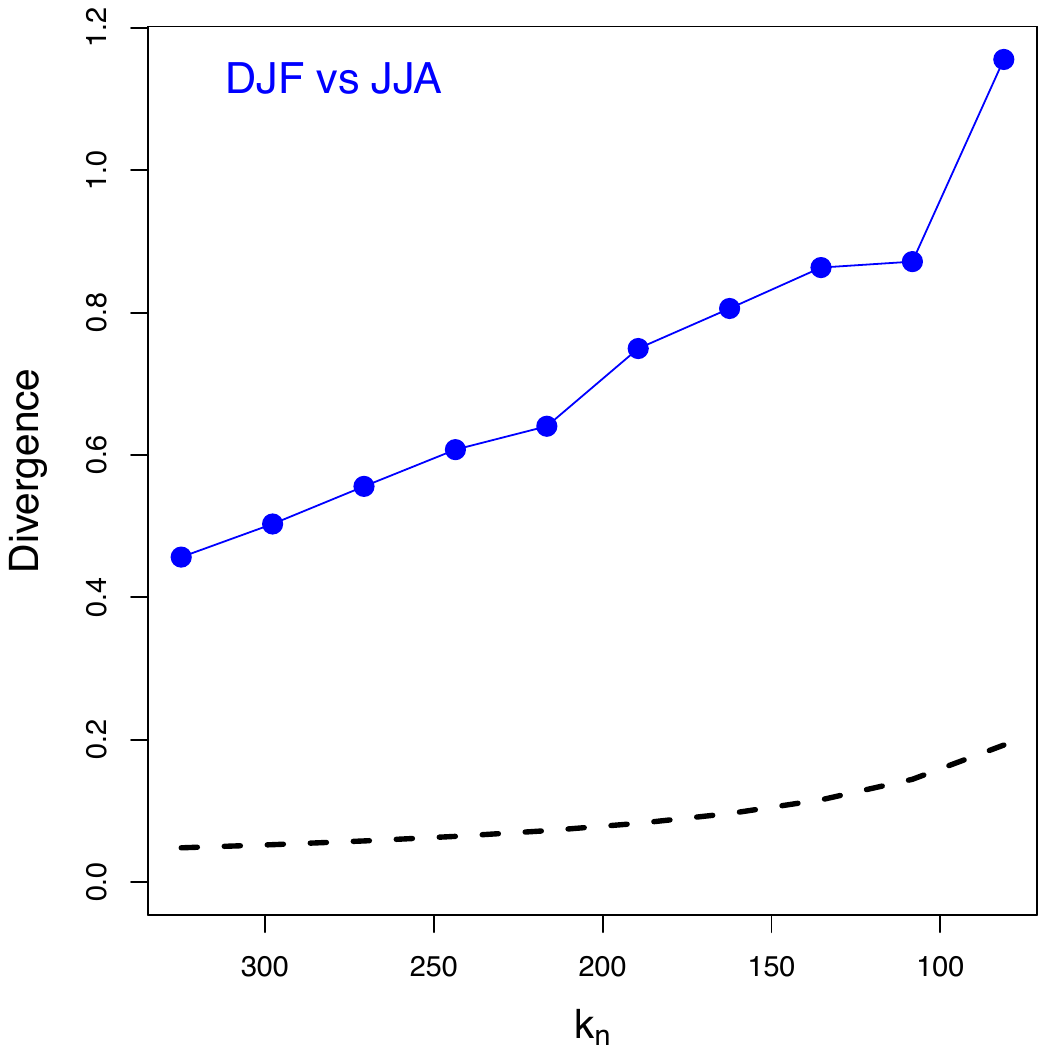}
	\hspace*{-1.8em}\includegraphics[trim={0cm 2cm 0 6cm},clip,width=0.38\linewidth]{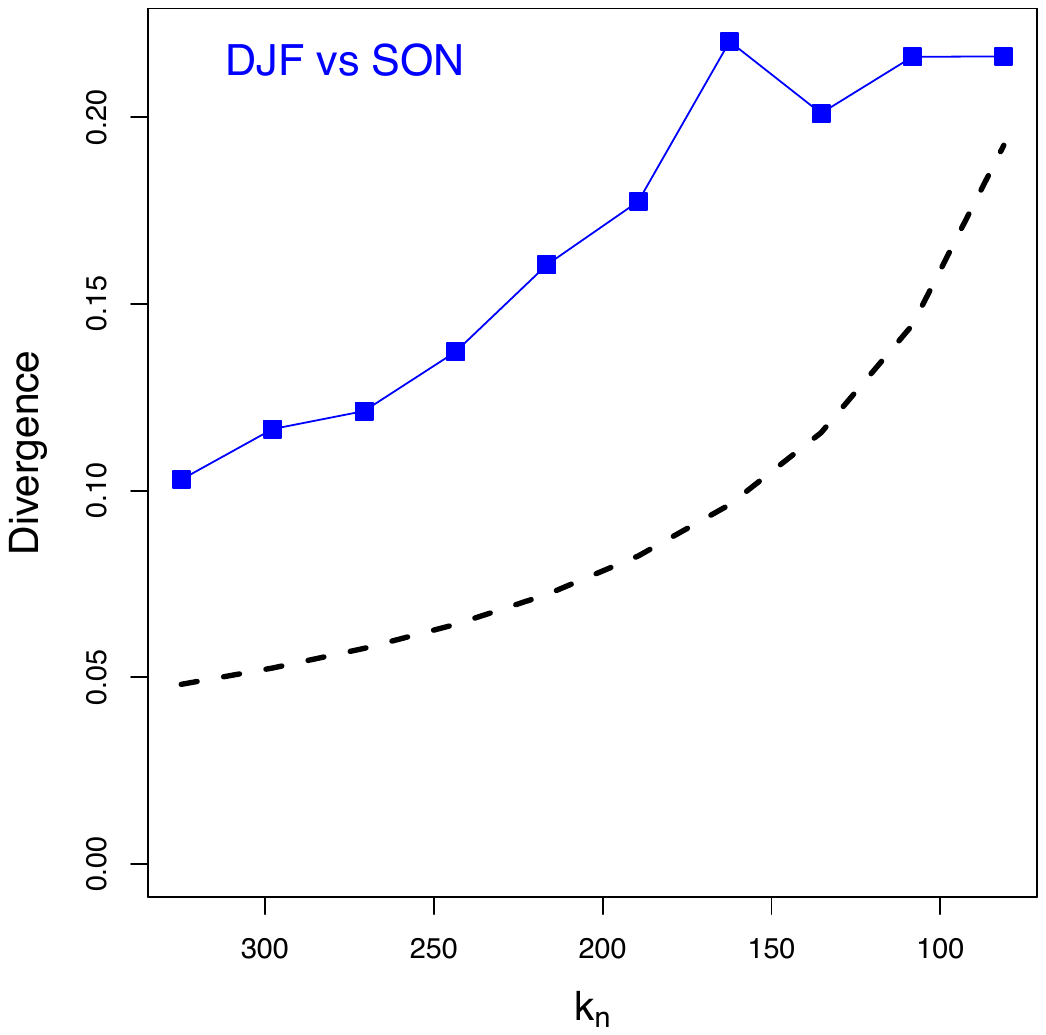}
 \hspace*{-3em}\\ \vspace*{-4.5em}%
	\hspace*{-2.7em}\includegraphics[trim={0cm 2cm 0 6cm},clip,width=0.38\linewidth]{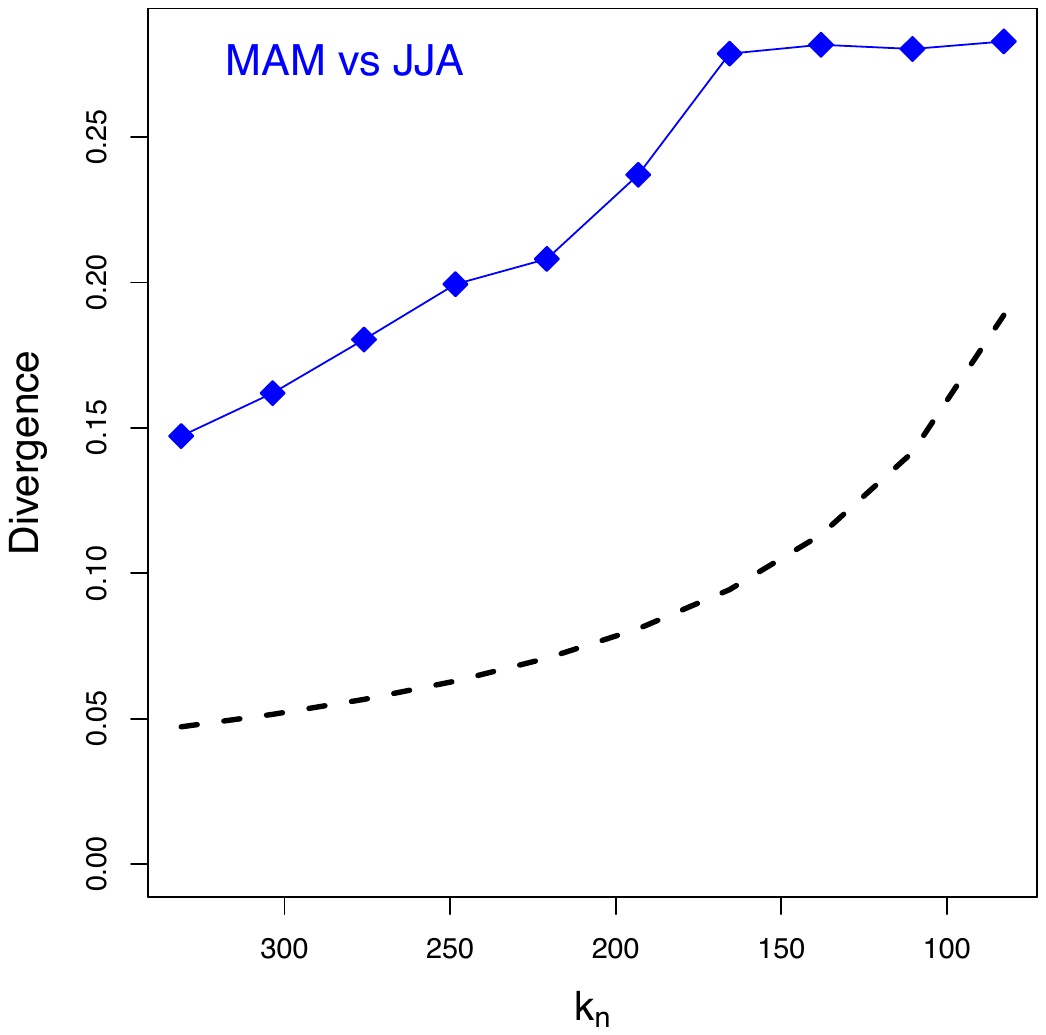}
  \hspace*{-1.8em}\includegraphics[trim={0cm 2cm 0 6cm},clip,width=0.38\linewidth]{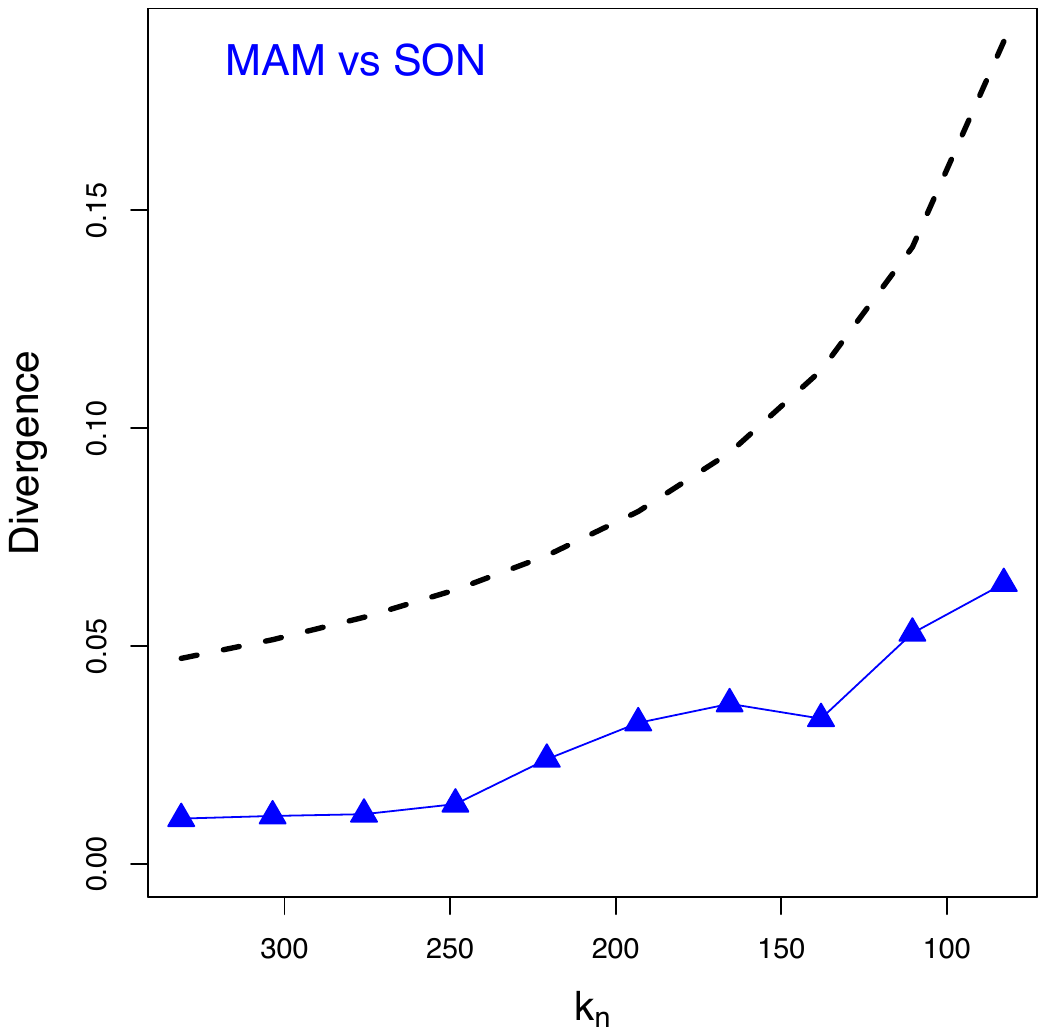}
	\hspace*{-1.8em}\includegraphics[trim={0cm 2cm 0 6cm},clip,width=0.38\linewidth]{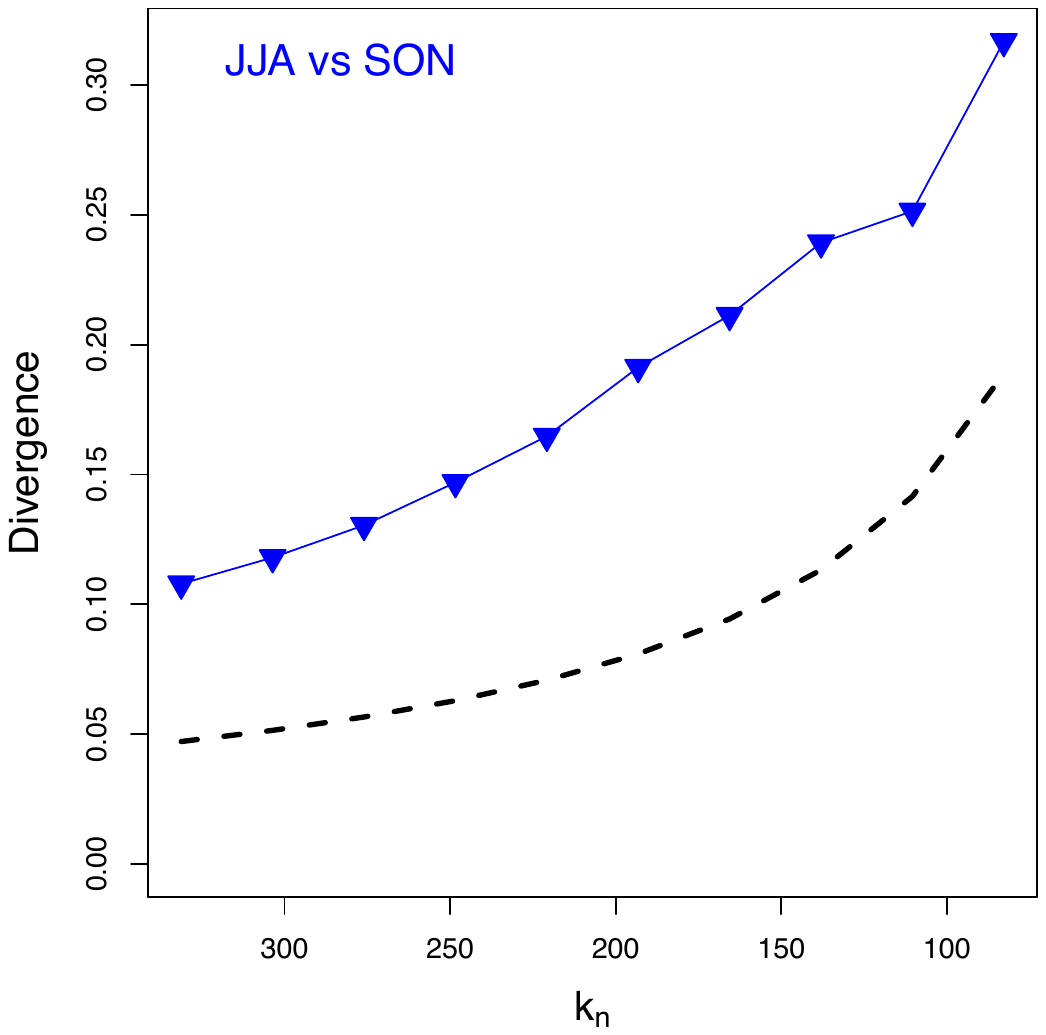}
 \vspace*{-5em}
 	\caption{For the city of Bordeaux (France), each panel compares the extremal dependence between daily  maxima of 6-min and hourly rainfall in two different seasons using the KL divergence $\hat D_K$ (blue line) based on the Euclidean risk function with $K=4$ sets, as a function of the number of exceedances $k_n$.
    Critical values at level 95\% (dashed black line) are obtained by the bootstrap procedure. 
    }
	\label{fig:bordeaux4sets}
\end{figure} 

While the extremal correlations in Figure~\ref{fig:bordeaux} seem to indicate no seasonal differences, our test rejects the null hypothesis of equal extremal dependence for all pairs of seasons
except for spring versus fall. 
The results are stable across different number of exceedances since $\hat D_K$ remains above the critical values at level 95\% (dashed lines).

A natural question is if this seasonal contrast is specific to Bordeaux, or if it generalizes in space. 
Figure \ref{fig:KL-map-4sets} indicates with orange signs whether our test rejects the null hypothesis (no seasonal change between 6-min and hourly precipitation extremes) with $K=4$ sets, $k_n=220$ exceedances and a 95\% critical level, at different locations in France.  
\begin{figure}[tb]
	\centering
  \hspace*{-1em}\includegraphics[width=.75\linewidth]{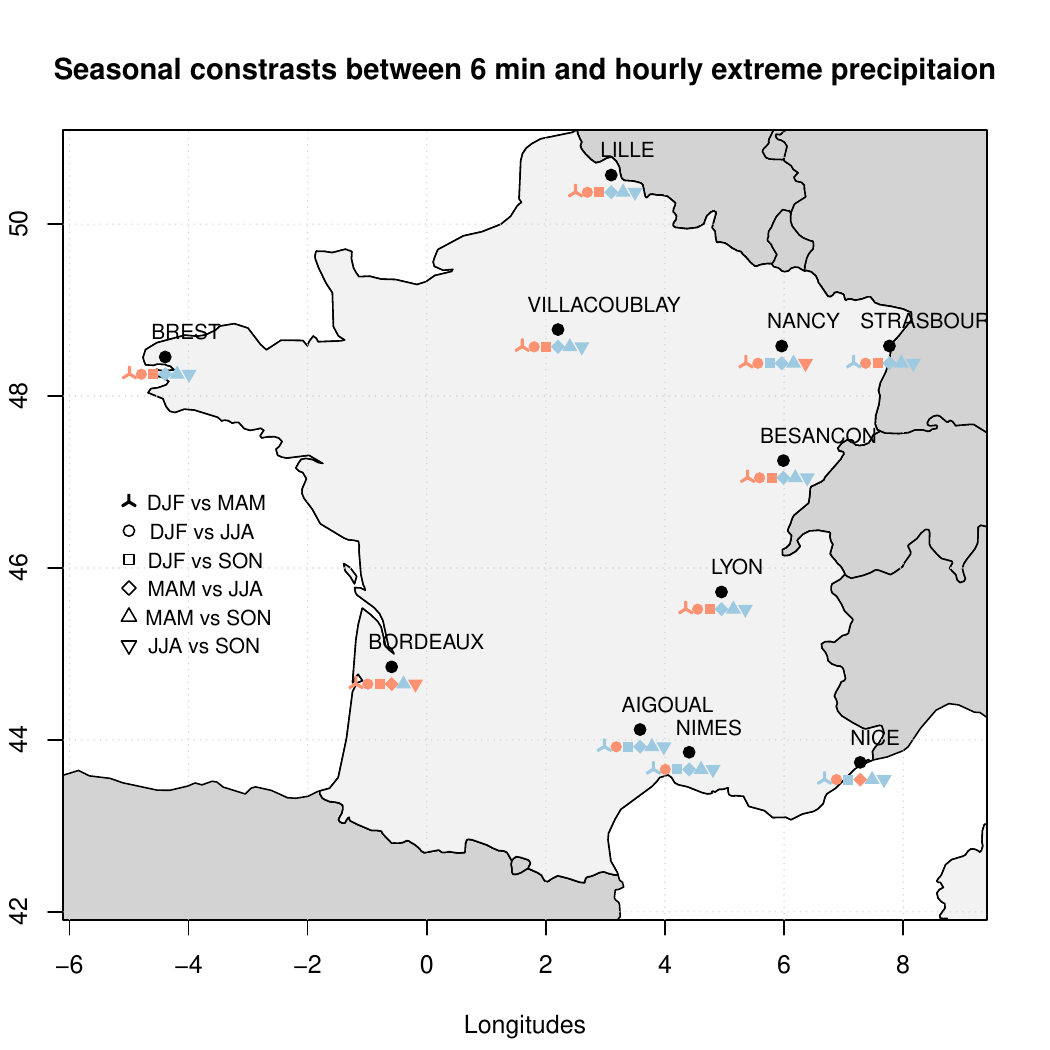}
	\caption{For each of the 11 rain gauges locations in France, an orange sign indicates that our KL divergence test, based on $\hat D_K$ with $K=4$ and $k_n=220$ exceedances, rejects the null hypothesis at the critical level 95\%. 
    }
	\label{fig:KL-map-4sets}
\end{figure} 
Generally, spring, summer and fall seem to behaves relatively similar,
whereas the winter season is clearly different. This phenomenon is particularly true for western, central and  northern stations like Brest, Lille, Villacoublay, Besancon and Bordeaux. For the other stations, the winter season mainly contrasts with spring season.
If the number $K$ of sets in the partition 
is too small, the test may lose power. For $K=2$ our test is essentially equivalent to a test based on the extremal correlation $\chi$.  
Figure \ref{fig:KL-map-2sets} shows that then differences 
cannot be identified with the test; see also the right-hand side of Figure~\ref{fig:nbsets} for a possible explanation.

From a hydrological point of view, this study emphasizes that, in addition to  well known seasonality effects on marginals at different aggregation scales, extremal dependencies between  different time scales can also be impacted by seasonal effects. 
This  result opens  research avenues in  the domain of IDF curve modeling, in particular how to physically explain and integrate this new knowledge. 

\begin{figure}[tb]
	\centering
  \hspace*{-1em}\includegraphics[width=.75\linewidth]{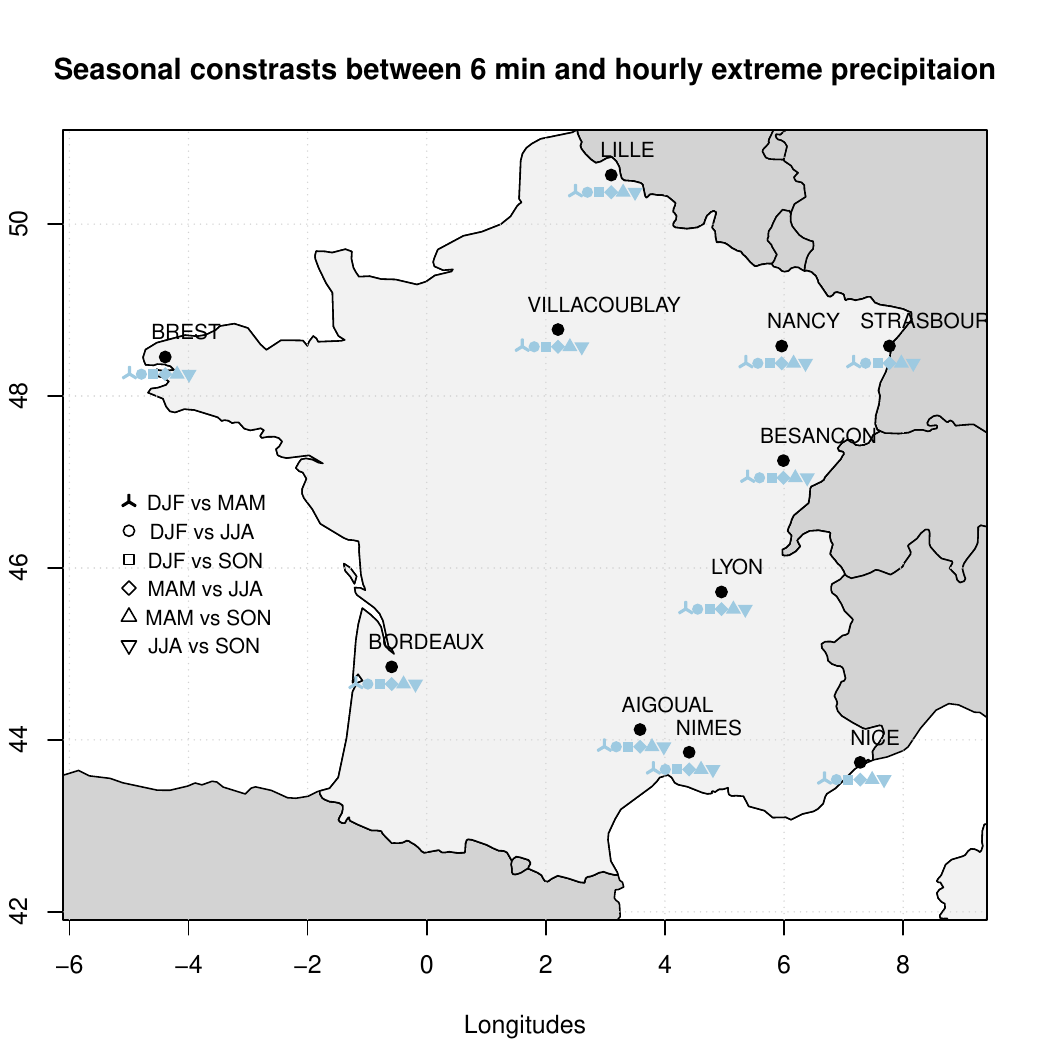}
	\caption{Same as Figure \ref{fig:KL-map-4sets} but with $K=2$ instead of $K=4$.
    }
	\label{fig:KL-map-2sets}
\end{figure}

\bibliographystyle{Chicago}

\spacingset{1.25} 
\footnotesize{\bibliography{ref}}

\begin{thebibliography}{}

\bibitem[\protect\citeauthoryear{Beirlant, Goegebeur, Teugels, and
  Segers}{Beirlant et~al.}{2005}]{beirlant_statistics_2005}
Beirlant, J., Y.~Goegebeur, J.~Teugels, and J.~Segers (2005, August).
\newblock {\em Statistics of {Extremes}: {Theory} and {Applications}}.

\bibitem[\protect\citeauthoryear{B{\"u}cher and Dette}{B{\"u}cher and
  Dette}{2013}]{bucher2013}
B{\"u}cher, A. and H.~Dette (2013).
\newblock Multiplier bootstrap of tail copulas with applications.
\newblock {\em Bernoulli\/}~{\em 19\/}(5A), 1655--1687.

\bibitem[\protect\citeauthoryear{B{\"u}cher, Kinsvater, and
  Kojadinovic}{B{\"u}cher et~al.}{2017}]{bucher2017}
B{\"u}cher, A., P.~Kinsvater, and I.~Kojadinovic (2017).
\newblock Detecting breaks in the dependence of multivariate extreme-value
  distributions.
\newblock {\em Extremes\/}~{\em 20}, 53--89.

\bibitem[\protect\citeauthoryear{Buishand, de~Haan, and Zhou}{Buishand
  et~al.}{2008}]{bui2008}
Buishand, T.~A., L.~de~Haan, and C.~Zhou (2008).
\newblock On spatial extremes: with application to a rainfall problem.
\newblock {\em Ann. Appl. Stat.\/}~{\em 2}, 624--642.

\bibitem[\protect\citeauthoryear{Coles, Heffernan, and Tawn}{Coles
  et~al.}{1999}]{col1999}
Coles, S., J.~Heffernan, and J.~Tawn (1999).
\newblock Dependence measures for extreme value analyses.
\newblock {\em Extremes\/}~{\em 2}, 339--365.

\bibitem[\protect\citeauthoryear{Coles and Tawn}{Coles and
  Tawn}{1991}]{col1991}
Coles, S.~G. and J.~A. Tawn (1991).
\newblock Modelling extreme multivariate events.
\newblock {\em J. R. Stat. Soc. Ser. B Stat. Methodol.\/}~{\em 53}, 377--392.

\bibitem[\protect\citeauthoryear{de~Fondeville and Davison}{de~Fondeville and
  Davison}{2022}]{Fondeville22}
de~Fondeville, R. and A.~C. Davison (2022, September).
\newblock {Functional peaks‐over‐threshold analysis}.
\newblock {\em Journal of the Royal Statistical Society Series B\/}~{\em
  84\/}(4), 1392--1422.

\bibitem[\protect\citeauthoryear{de~Haan and Ferreira}{de~Haan and
  Ferreira}{2006}]{deh2006a}
de~Haan, L. and A.~Ferreira (2006).
\newblock {\em Extreme Value Theory}.
\newblock New York: Springer.

\bibitem[\protect\citeauthoryear{de~Haan and Pereira}{de~Haan and
  Pereira}{2006}]{deh2006}
de~Haan, L. and T.~T. Pereira (2006).
\newblock Spatial extremes: models for the stationary case.
\newblock {\em Ann. Statist.\/}~{\em 34}, 146--168.

\bibitem[\protect\citeauthoryear{Dombry, Engelke, and Oesting}{Dombry
  et~al.}{2016}]{dom2016}
Dombry, C., S.~Engelke, and M.~Oesting (2016).
\newblock Exact simulation of max-stable processes.
\newblock {\em Biometrika\/}~{\em 103}, 303--317.

\bibitem[\protect\citeauthoryear{Dombry and Ribatet}{Dombry and
  Ribatet}{2015}]{dom2015}
Dombry, C. and M.~Ribatet (2015).
\newblock Functional regular variations, {P}areto processes and peaks over
  threshold.
\newblock {\em Statistics and its Interface\/}~{\em 8}, 9--17.

\bibitem[\protect\citeauthoryear{Dudley}{Dudley}{1978}]{Dudley1978}
Dudley, R. (1978).
\newblock Central limit theorems for empirical measures.
\newblock {\em Annals of Probability\/}~{\em 6\/}(6), 899--929.

\bibitem[\protect\citeauthoryear{Einmahl, de~Haan, and Sinha}{Einmahl
  et~al.}{1997}]{einmahl1997}
Einmahl, H., L.~de~Haan, and A.~Sinha (1997).
\newblock Estimating the spectral measure of an extreme value distribution.
\newblock {\em Stochastic Processes and their Applications\/}~{\em 70\/}(2),
  143--171.

\bibitem[\protect\citeauthoryear{Einmahl, Piterbarg, and de~Haan}{Einmahl
  et~al.}{2001}]{einmahl2001}
Einmahl, J.~H., V.~I. Piterbarg, and L.~de~Haan (2001).
\newblock Nonparametric estimation of the spectral measure of an extreme value
  distribution.
\newblock {\em Annals of Statistics\/}~{\em 29\/}(5), 1401--1423.

\bibitem[\protect\citeauthoryear{Embrechts, Kl\"{u}ppelberg, and
  Mikosch}{Embrechts et~al.}{1997}]{emb1997}
Embrechts, P., C.~Kl\"{u}ppelberg, and T.~Mikosch (1997).
\newblock {\em Modelling Extremal Events: for Insurance and Finance}.
\newblock London: Springer.

\bibitem[\protect\citeauthoryear{Engelke, Hitz, Gnecco, and Hentschel}{Engelke
  et~al.}{2024}]{graphicalExtremes2024}
Engelke, S., S.~A. Hitz, N.~Gnecco, and M.~Hentschel (2024).
\newblock {\em graphical{E}xtremes: Statistical Methodology for Graphical
  Extreme Value Models}.
\newblock Available from
  \url{https://CRAN.R-project.org/package=graphicalExtremes}, R package version
  0.3.2.

\bibitem[\protect\citeauthoryear{Engelke, Malinowski, Kabluchko, and
  Schlather}{Engelke et~al.}{2015}]{eng2014}
Engelke, S., A.~Malinowski, Z.~Kabluchko, and M.~Schlather (2015).
\newblock Estimation of {H}\"usler--{R}eiss distributions and
  {B}rown--{R}esnick processes.
\newblock {\em J. R. Stat. Soc. Ser. B Stat. Methodol.\/}~{\em 77}, 239--265.

\bibitem[\protect\citeauthoryear{Gudendorf and Segers}{Gudendorf and
  Segers}{2010}]{seg2010}
Gudendorf, G. and J.~Segers (2010).
\newblock Extreme-value copulas.
\newblock In {\em Copula Theory and Its Applications}, pp.\  127--145.
  Springer.

\bibitem[\protect\citeauthoryear{Gumbel}{Gumbel}{1960}]{gum1960}
Gumbel, E.~J. (1960).
\newblock Distributions de valeurs extr{\^{e}}mes en plusieurs dimensions.
\newblock {\em Publ. Inst. Statist. Paris\/}~{\em 9}, 171--173.

\bibitem[\protect\citeauthoryear{Haruna, Blanchet, and Favre}{Haruna
  et~al.}{2023}]{haruna2023modeling}
Haruna, A., J.~Blanchet, and A.-C. Favre (2023).
\newblock Modeling intensity-duration-frequency curves for the whole range of
  non-zero precipitation: a comparison of models.
\newblock {\em Water Resources Research\/}~{\em 59\/}(6), e2022WR033362.

\bibitem[\protect\citeauthoryear{Hershfield}{Hershfield}{1961}]{Hershfield61}
Hershfield, D.~M. (1961).
\newblock Rainfall frequency atlas of the united states for durations from 30
  minutes to 24 hours and return periods from 1-100 years.
\newblock Technical report, Weather Bureau, Department of Commerce (Washington,
  DC).

\bibitem[\protect\citeauthoryear{Hofert, Kojadinovic, Maechler, and Yan}{Hofert
  et~al.}{2023}]{copula2023}
Hofert, M., I.~Kojadinovic, M.~Maechler, and J.~Yan (2023).
\newblock {\em copula: Multivariate Dependence with Copulas}.
\newblock R package version 1.1-3.

\bibitem[\protect\citeauthoryear{Huser and Davison}{Huser and
  Davison}{2014}]{hus2013}
Huser, R. and A.~C. Davison (2014).
\newblock Space--time modelling of extreme events.
\newblock {\em J. R. Stat. Soc. Ser. B Stat. Methodol.\/}~{\em 76}, 439--461.

\bibitem[\protect\citeauthoryear{Li, Gao, Yin, and Wu}{Li
  et~al.}{2024}]{li2024}
Li, Y., J.~Gao, J.~Yin, and S.~Wu (2024).
\newblock Assessing the potential of compound extreme storm surge and
  precipitation along china's coastline.
\newblock {\em Weather and Climate Extremes\/}~{\em 45}, 100702.

\bibitem[\protect\citeauthoryear{Nasr, Wahl, Rashid, Camus, and Haigh}{Nasr
  et~al.}{2021}]{nas2021}
Nasr, A.~A., T.~Wahl, M.~M. Rashid, P.~Camus, and I.~D. Haigh (2021).
\newblock Assessing the dependence structure between oceanographic, fluvial,
  and pluvial flooding drivers along the united states coastline.
\newblock {\em Hydrology and Earth System Sciences\/}~{\em 25\/}(12),
  6203--6222.

\bibitem[\protect\citeauthoryear{Nasr, Wahl, Rashid, Jane, Camus, and
  Haigh}{Nasr et~al.}{2023}]{nas2023}
Nasr, A.~A., T.~Wahl, M.~M. Rashid, R.~A. Jane, P.~Camus, and I.~D. Haigh
  (2023).
\newblock Temporal changes in dependence between compound coastal and inland
  flooding drivers around the contiguous united states coastline.
\newblock {\em Weather and Climate Extremes\/}~{\em 41}, 100594.

\bibitem[\protect\citeauthoryear{Naveau, Guillou, and Rietsch}{Naveau
  et~al.}{2014}]{nav2014}
Naveau, P., A.~Guillou, and T.~Rietsch (2014).
\newblock A non-parametric entropy-based approach to detect changes in climate
  extremes.
\newblock {\em J. R. Stat. Soc. Ser. B Stat. Methodol.\/}~{\em 76}, 861--884.

\bibitem[\protect\citeauthoryear{Naveau, Hannart, and Ribes}{Naveau
  et~al.}{2020}]{naveau_statistical_2020}
Naveau, P., A.~Hannart, and A.~Ribes (2020).
\newblock Statistical {Methods} for {Extreme} {Event} {Attribution} in
  {Climate} {Science}.
\newblock {\em Annual Reviews of Statistics and its Application\/}.
\newblock Publisher: Annual Reviews.

\bibitem[\protect\citeauthoryear{Resnick}{Resnick}{2008}]{res2008}
Resnick, S.~I. (2008).
\newblock {\em Extreme Values, Regular Variation and Point Processes}.
\newblock New York: Springer.

\bibitem[\protect\citeauthoryear{Schlather and Tawn}{Schlather and
  Tawn}{2003}]{sch2003}
Schlather, M. and J.~A. Tawn (2003).
\newblock A dependence measure for multivariate and spatial extreme values:
  Properties and inference.
\newblock {\em Biometrika\/}~{\em 90}, 139--156.

\bibitem[\protect\citeauthoryear{Ulrich, Fauer, and Rust}{Ulrich
  et~al.}{2021}]{ulrich2021modeling}
Ulrich, J., F.~S. Fauer, and H.~W. Rust (2021).
\newblock Modeling seasonal variations of extreme rainfall on different
  timescales in germany.
\newblock {\em Hydrology and Earth System Sciences\/}~{\em 25\/}(12),
  6133--6149.

\bibitem[\protect\citeauthoryear{Ulrich, Jurado, Peter, Scheibel, and
  Rust}{Ulrich et~al.}{2020}]{Ulrich20}
Ulrich, J., O.~E. Jurado, M.~Peter, M.~Scheibel, and H.~W. Rust (2020).
\newblock Estimating idf curves consistently over durations with spatial
  covariates.
\newblock {\em Water\/}~{\em 12\/}(11:3119).

\bibitem[\protect\citeauthoryear{Vignotto, Engelke, and Zscheischler}{Vignotto
  et~al.}{2021}]{vig2021}
Vignotto, E., S.~Engelke, and J.~Zscheischler (2021).
\newblock Clustering bivariate dependencies of compound precipitation and wind
  extremes over great britain and ireland.
\newblock {\em Weather and Climate Extremes\/}~{\em 32}, 100318.

\bibitem[\protect\citeauthoryear{Zscheischler, Martius, Westra, Bevacqua,
  Raymond, Horton, Hurk, AghaKouchak, Jézéquel, Mahecha, Maraun, Ramos,
  Ridder, Thiery, and Vignotto}{Zscheischler
  et~al.}{2020}]{Zscheischler20review}
Zscheischler, J., O.~Martius, S.~Westra, E.~Bevacqua, C.~Raymond, R.~Horton,
  B.~Hurk, A.~AghaKouchak, A.~Jézéquel, M.~Mahecha, D.~Maraun, A.~Ramos,
  N.~Ridder, W.~Thiery, and E.~Vignotto (2020, 06).
\newblock A typology of compound weather and climate events.
\newblock {\em Nature Reviews Earth {\&} Environment\/}~{\em 1}, 1--15.

\bibitem[\protect\citeauthoryear{Zscheischler, Naveau, Martius, Engelke, and
  Raible}{Zscheischler et~al.}{2020}]{Zscheischler20}
Zscheischler, J., P.~Naveau, O.~Martius, S.~Engelke, and C.~C. Raible (2020).
\newblock Evaluating the dependence structure of compound precipitation and
  wind speed extremes.
\newblock {\em Earth System Dynamics Discussions\/}~{\em 2020}, 1--23.

\bibitem[\protect\citeauthoryear{Zscheischler and Seneviratne}{Zscheischler and
  Seneviratne}{2017}]{zsc2017}
Zscheischler, J. and S.~I. Seneviratne (2017).
\newblock Dependence of drivers affects risks associated with compound events.
\newblock {\em Science Advances\/}~{\em 3}.

\end{thebibliography}

\newpage

\setcounter{page}{1}

\appendix

\spacingset{1.8}

\begin{center}
 {\Large \textbf{SUPPLEMENTARY MATERIAL TO \\
\vspace{12pt}
``A Kullback--Leibler divergence test for multivariate extremes: theory and practice'' }}
\end{center}

\section{Proofs}\label{sec: Proof}
\begin{proof}[Proof of Proposition \ref{prop: values of p}] Choose $0<c<\min\set{\norm{\g x}_\infty: r(\g x)>1}$. Then $\Omega_r\subset \set{\g x :\norm{\g x}_\infty> c}$. Note that
    $uB\subset u\Omega_r \subset\set{\g x :\norm{\g x}_\infty> uc}$. We have that
    \begin{align*}
        P(\g X\in uB|r(\g X)>u)&=\frac{P(\g x \in uB)}{P(\g x \in u\Omega_r)}\\
        &=\frac{P(\g x \in uB, \norm{\g x}_\infty> uc)}{P(\g x \in u\Omega_r, \norm{\g x}_\infty> uc)}\\
        &=\frac{P(\g x \in uB| \norm{\g x}_\infty> uc)}{P(\g x \in u\Omega_r| \norm{\g x}_\infty> uc)}
    \end{align*}
    By regarding $uc$ as $u$ in \eqref{mevd} and take $u\to\infty$, we get that
    $$\lim_{u\to\infty} P(\g X\in uB|r(\g X)>u)=\frac{\nu(B/c)}{\nu(\Omega_r/c)}=\frac{\nu(B)}{\nu(\Omega_r)},$$
    where the last line follows from the homogeneity property of $\nu$.
\end{proof}

To prove Theorem \ref{thm_AD_hom}, we first establish the joint asymptotic behavior of the estimators $\hat p_j$ in the following Proposition.
\begin{prop} \label{prop: probability convergence estimate threshold}
Assume the same conditions as in Theorem \ref{thm_AD_hom}. Then, we have that for all $1\leq j\leq K$ jointly, as $n\to\infty$,
$$\sqrt{k_n} \left(\hat p_{j}-p_{j}\right)   \td  N_j-p_j\sum_{j=1}^K N_j,$$
where $N_1,N_2,\ldots,N_K$ are independent normally distributed random variables with mean zero and variance $p_{1}, p_{2},\ldots,p_K$ respectively.
\end{prop}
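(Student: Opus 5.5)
The plan is to reduce the statement to a functional central limit theorem for a tail empirical process indexed by the sets of $\mathcal A_r$, evaluated at the random threshold $u_n^X=R^X_{n-k_n,n}$. Write $u_n=U_X(n/k_n)$, so that $P(\norm{\g X}_\infty>u_n)\approx k_n/n$. Let $c>0$ be as in the proof of Proposition~\ref{prop: values of p}, and set $m_r=\nu_X(\Omega_r)$ after rescaling by $c$. For $t$ in a neighbourhood of $1$ and $B$ in the finite collection generated by $A_1,\dots,A_K$, consider
\[
W_n(B,t)=\sqrt{k_n}\Bigl(\frac1{k_n}\sum_{i=1}^n \einsfun\{\g X_i\in u_n tB\}-\frac{n}{k_n}P(\g X\in u_ntB)\Bigr).
\]
This is a sum of i.i.d.\ bounded, centred variables. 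The first step is to show that $W_n$ converges weakly, uniformly in $t\in[1/2,2]$ and jointly over the relevant sets, to a Gaussian process $W$ with covariance $\nu_X(tB\cap sB')$. For a single set and fixed $t$ this follows from Lindeberg, since the variance $\to\nu_X(tB)$. Tightness in $t$ is where Assumption~\ref{ass:sets} enters. For $B\in\mathcal A_r$ the family $\{tB\}_t$ is nested and hence a VC class of index $2$. Finite Boolean combinations of such classes remain VC, so the Dudley/uniform CLT for VC classes \citep{Dudley1978} (in the tail-empirical form of \citet{einmahl1997}) gives the uniform convergence.

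Next I handle the random threshold. Define $\hat t_n=u_n^X/u_n$. Since $\sum_i\einsfun\{r(\g X_i)>u_n\hat t_n\}=k_n$ exactly, evaluating $W_n(\Omega_r,\hat t_n)$ gives
\[
0=W_n(\Omega_r,\hat t_n)+\sqrt{k_n}\Bigl(1-\tfrac{n}{k_n}P(r(\g X)>u_n\hat t_n)\Bigr).
\]
By the second-order condition \eqref{mevd_2nd} with $\sqrt{k_n}a_X(u_n)\to0$, we have $\frac{n}{k_n}P(\g X\in u_ntB)=t^{-1}\nu_X(B)+o(k_n^{-1/2})$ uniformly in $t$. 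After normalising $\nu_X(\Omega_r)$ appropriately, this yields $\hat t_n\tp1$ and $\sqrt{k_n}(\hat t_n-1)$ asymptotically equal to a multiple of $W(\Omega_r,1)$.

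Then I expand the ratio. We have $\hat p_j=\frac{1}{k_n}\sum_i\einsfun\{\g X_i\in u_n\hat t_nA_j\}$, which equals
\[
\frac{n}{k_n}P(\g X\in u_n\hat t_nA_j)+k_n^{-1/2}W_n(A_j,\hat t_n).
\]
The deterministic part equals $\hat t_n^{-1}\nu_X(A_j)/m+o(k_n^{-1/2})$, with $m$ chosen so that $\hat t_n^{-1}\nu_X(\Omega_r)/m=1+O_P(k_n^{-1/2})$. By uniform tightness together with asymptotic equicontinuity, $W_n(A_j,\hat t_n)=W_n(A_j,1)+o_P(1)$. Combining these with the threshold identity gives
\[
\sqrt{k_n}(\hat p_j-p_j)=W(A_j)-p_jW(\Omega_r)+o_P(1).
\]
After normalising so that $\frac{n}{k_n}P(r(\g X)>u_n)\to1$, the covariance of $W$ on disjoint sets is $\mathrm{Cov}(W(A_j),W(A_l))=p_j\delta_{jl}$. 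Thus the $N_j:=W(A_j)$ are independent $N(0,p_j)$, and $W(\Omega_r)=\sum_lN_l$ because the $A_l$ partition $\Omega_r$. Joint convergence over $j$ is automatic from the process convergence.

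I expect the main obstacle to be the uniform-in-$t$ step. This step needs both the VC/equicontinuity property of the classes $\{tA_j\}$ built from $\mathcal A_r$ and a uniform version of \eqref{mevd_2nd} over $t$ near $1$. If uniformity of the $O(a_X)$ bound is not directly available, I would first try a monotonicity sandwich: bracket $\hat t_n$ between deterministic $t_n^\pm=1\pm Ck_n^{-1/2}$ and use the nestedness of each generating set in $\mathcal A_r$ to pass from fixed $t$ to uniform control.
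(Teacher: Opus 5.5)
Your proposal follows essentially the same route as the paper: a uniform tail-empirical-process approximation over the VC class $\{tA_j\}$ built from Assumption~\ref{ass:sets} (Lemma~3.1 of \citet{einmahl1997}), then replacement of the centering by $t^{-1}\nu_X(A_j)$ via \eqref{mevd_2nd} and $\sqrt{k_n}a_X\to0$ (the paper also simply asserts the uniformity in $t$ that you flag), and finally substitution of the random threshold, where your exact-count identity plus asymptotic equicontinuity replaces the paper's Skorokhod construction and Vervaat's lemma. The only slip is the sign in your threshold identity, which should read $0=W_n(\Omega_r,\hat t_n)-\sqrt{k_n}\bigl(1-\tfrac{n}{k_n}P(r(\g X)>u_n t)\big|_{t=\hat t_n}\bigr)$; your final expansion $W(A_j)-p_jW(\Omega_r)$ is nonetheless correct.
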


Consider sets $S(t,j)=tA_j$ for $t\in[a,b]$ with any fixed $0<a<b<+\infty$, $1\leq j\leq K$. Denote $\tilde u_n=U^{-1}_X(k_n/n)$. To prove Proposition \ref{prop: probability convergence estimate threshold}, we intend to  apply Lemma 3.1 in \cite{einmahl1997} to the sequence $\{\g Z_i^{(n)}\}_{i=1}^n=\{\g X_i/\tilde u_n\}_{i=1}^n$ and the class of sets $\{S(t,j):t\in[a,b], 1\leq j\leq K\}$. To check the conditions therein, we first show the following preliminary lemma.

\begin{lemma} \label{lemma: VC class}
  Assume that Assumption \ref{ass:sets} holds. Then the class of sets $\{S(t,j):t\in[a,b], 1\leq j\leq K\}$ forms a VC class with finite dimension. In addition, they satisfy the condition (SE) in Gaenssler (1983, p.108)
\end{lemma}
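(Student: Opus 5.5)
The plan is to reduce the family $\{S(t,j)\}$ to one-parameter chains of nested sets and then use standard stability properties of VC classes. Fix $j$. By Assumption~\ref{ass:sets}, $A_j$ is obtained from finitely many sets $B_1,\dots,B_m\in\mathcal A_r$ by a fixed Boolean formula $\Phi$ (finitely many unions, intersections and complements), so $A_j=\Phi(B_1,\dots,B_m)$. Dilation $\g x\mapsto t\g x$ is a bijection of $[0,\infty)^d$, and it commutes with all Boolean operations. Hence
\[
S(t,j)=tA_j=\Phi(tB_1,\dots,tB_m),
\]
where complements are taken in the ambient space, which is also invariant under dilation.

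The key observation concerns each $B\in\mathcal A_r$. For $s<t$ we have $tB=s\,(t/s)B\subset sB$, so the family $\mathcal C_B=\{tB: t\in[a,b]\}$ is totally ordered by inclusion, i.e.\ a chain. A chain cannot shatter any two-point set $\{x,y\}$. To pick out $\{x\}$ alone and $\{y\}$ alone we would need sets $C\ni x$, $C\not\ni y$ and $C'\ni y$, $C'\not\ni x$, and then neither $C\subset C'$ nor $C'\subset C$. So $\mathcal C_B$ is a VC class with VC dimension at most $1$.

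Next, the class $\{\Phi(C_1,\dots,C_m): C_i\in\mathcal C_{B_i}\}$ is VC by the classical permanence result (e.g.\ van der Vaart and Wellner, Lemma 2.6.17, or Dudley 1978). Finite Boolean combinations of VC classes are VC with finite dimension, and the bound depends only on $m$ and the dimensions of the $\mathcal C_{B_i}$. Our class $\{S(t,j):t\in[a,b]\}$ is the subfamily obtained by using a common $t$ in every coordinate. A subfamily of a VC class is VC. Finally, the union over $j=1,\dots,K$ of finitely many VC classes is VC, with finite dimension. This proves the first claim.

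For condition (SE) of Gaenssler, I need a measurable structure: the class should be the image of a Polish (or Suslin) parameter space under a map for which $(t,\g x)\mapsto \einsfun\{\g x\in S(t,j)\}$ is jointly measurable. The parameter space is $[a,b]\times\{1,\dots,K\}$, which is Polish. Joint measurability holds because
\[
\einsfun\{\g x\in tA_j\}=\einsfun\{\g x/t\in A_j\},
\]
and $(t,\g x)\mapsto \g x/t$ is continuous while $A_j$ is Borel. So the indicator is a composition of a Borel function with a continuous map, hence jointly Borel, and this yields (SE).

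I expect this measurability step to be the main point to verify carefully against Gaenssler's exact formulation. The VC part is routine once the chain structure is noticed. As an alternative route to (SE), I could restrict to rational $t$ and use the monotonicity of each $tB_i$ in $t$ to approximate pointwise, but that seems unnecessary.
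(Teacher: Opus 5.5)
Your VC argument is the paper's. The paper also uses that $\{tB_l:t\in[a,b]\}$ is nested in $t$ and so cannot shatter two points, then invokes Dudley's Proposition 7.12 for finite Boolean combinations and takes the finite union over $j$. You are more explicit on two points the paper leaves implicit. First, a \emph{fixed} formula $\Phi$ is needed. Second, $tA_j=\Phi(tB_1,\dots,tB_m)$ uses one common $t$, so the class is a subfamily of the VC class of all $\Phi(C_1,\dots,C_m)$. If complements in Assumption~\ref{ass:sets} are meant relative to $\Omega_r$, add $\Omega_r\in\mathcal A_r$ to the generators and your ambient-complement convention still applies.

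The genuine difference is in (SE). The paper calls it obvious by restricting to rational $t$, which is a countable-approximation argument. You instead verify joint Borel measurability of $(t,j,\g x)\mapsto\einsfun\{\g x/t\in A_j\}$ over the Polish index set $[a,b]\times\{1,\dots,K\}$. That is the image-admissible Suslin form of the condition, and it needs nothing beyond Borel measurability of the $A_j$.

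Your preference is well founded, because the rational-$t$ fallback you mention is more delicate than it looks. Take $d=2$, $r=\max$ and $B=\{r>2\}\cup\{r=2,\ x_1>x_2\}\in\mathcal A_r$. No sequence $s_k\neq t$ gives $\einsfun_{s_kB}\to\einsfun_{tB}$ pointwise: the point $(2t,0)\in tB$ forces $s_k<t$ eventually, while $(0,2t)\notin tB$ forces $s_k>t$ eventually. So the paper's one-line remark would need regularity of the generating sets along rays that your measurability argument does not. What remains to confirm, as you say, is that Gaenssler's (SE) is indeed this Suslin-type measurability condition.
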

\begin{proof}[Proof of Lemma \ref{lemma: VC class}]
Following Assumption \ref{ass:sets}, there must exist finite number of Borel sets in $\mathcal A_r$, denoted as $\{B_1,\ldots, B_L\}$ such that for all $j=1,2,\ldots, K$, $A_j$ can be expressed using a finite number of the operations of union, intersection, and complementation on the sets in $\{tB_l:t\in[a, b], 1\leq l\leq L\}$. Without loss of generality, we assume that $B_l\subset \Omega_r$ for all $l=1,2,\ldots, L$.

Firstly, we show that for each given $l$ the class of sets $\set{tB_l: t\in[a,b]}$ is a VC class with dimension 3. For any given 3 points in $a\Omega_r$, If there are at least two falling outside $aB_l$, their subset cannot be shattered by the class. If there is at most one falling outside $aB_l$, i.e. at least two points $x_1$ and $x_2$ are within the set $aB_l$. Since $tB_l\subset sB_l$ for any $t>s$, for each point $x_i$ there exists a unique value $t_i$ such that $x_i\in tB_l$ for all $t<t_i$ and $x_i\in (tB_l)^c$ for all $t>t_i$. By consider all three situations $t_1<t_2$, $t_1=t_2$ and $t_1>t_2$, it is straightforward to obtain that in each case, it is not possible to shatter all the subsets of the two points $x_1$ and $x_2$ using $tB_l$ for $t\in[a,b]$. 
	
Secondly, since combining finite number of VC classes with finite dimension will lead to a VC class with finite dimension, we get that the class of sets $\{tB_l:t\in[a,b], 1\leq l\leq L\}$ forms a VC class with finite dimension

Lastly, since any set in $S(t,j)$ with $t\in[a, b], 1\leq j\leq K$ can be expressed using a finite number of the operations of union, intersection, and complementation on the sets in $\{tB_l:t\in[a, b], 1\leq l\leq L\}$, the class $\{S(t,j):t\in[a, b], 1\leq j\leq K\}$ also forms a VC class with finite dimension; see Proposition 7.12 in \cite{Dudley1978}. 
	
The second half of the lemma regarding the condition (SE) is obvious by considering all rational numbers $t\in[a, b]$.
\end{proof}

\begin{proof}[Proof of Proposition \ref{prop: probability convergence estimate threshold}]

Recall that $\tilde u_n=U_X^{-1}(k_n/n)$, which implies $P(\norm{\g X}_\infty >\tilde u_n)=\frac{k_n}{n}$ and $\tilde u_n\to \infty$ as $n\to\infty$. Further recall that $\{\g Z_i^{(n)}\}_{i=1}^n=\{\g X_i/\tilde u_n\}_{i=1}^n$ and $S(t,j)=tA_j$. By applying the second order condition \eqref{mevd_2nd} to $A_j$, we obtain that
as $n\to\infty$,
\begin{equation}\label{limit for A}
\frac{n}{k_n}\mathbb P(\g Z_i^{(n)}\in S(t,j))=\frac{\mathbb P(\g X_i\in \tilde u_n t A_j)}{P(\norm{\g X}_\infty >\tilde u_n)}\to \nu_X(tA_j)= t^{-1}\nu_X(A_j),
\end{equation}
uniformly for all $t\in[a, b]$ and $j=1,2,\ldots,K$. This verifies condition (3.1) in Lemma 3.1 in \cite{einmahl1997}  with a limit measure $\mu=\nu_X$. In fact, we have a more precise limit relation: as $n\to\infty$,
\begin{equation}\label{limit for A 2nd}
  \frac{n}{k_n}\mathbb P(\g Z_i^{(n)}\in S(t,j))-t^{-1}\nu_X(A_j)=O(a_X(\tilde u_n)),
\end{equation}
uniformly for all $t\in[a, b]$ and $j=1,2,\ldots,K$.

With the other conditions checked in Lemma \ref{lemma: VC class} above, we can apply Lemma 3.1 in \cite{einmahl1997} to the sequence $\{\g Z_i^{(n)}\}_{i=1}^n=\{\g X_i/\tilde u_n\}_{i=1}^n$ and the class of sets $\mathcal{B}:=\{S(t,j):t\in[a, b], 1\leq j\leq K\}$ and obtain the following result. Under a proper Skorokhod construction, there exists a series of bounded, uniformly continuous, mean zero Gaussian process $W_n$ defined on $S\in \mathcal{B}$ such that for any $S_1, S_2\in  \mathcal{B}$
$$\text{Cov}(W_n(S_1),W_n(S_2))=\mu(S_1\bigcap S_2),$$
and as $n\to\infty$,
$$\abs{\sqrt{k_n}\suit{\frac{1}{k_n}\sum_{i=1}^n1_{\set{\g X_i\in t\tilde u_n A_j}}-\frac{n}{k_n}\mathbb P(\g X_i\in t\tilde u_n A_j)}- W_n(tA_j)}\stackrel{a.s.}{\to} 0,$$
uniformly for $t\in[a, b]$ and $j=1,2,\ldots,l$.

From \eqref{limit for A 2nd} and the condition that $\sqrt{k_n}a_X(\tilde u_n)\to 0$ as $n\to\infty$, we can replace the deterministic term $\frac{n}{k_n}\mathbb P(\g X_i\in t\tilde u_n A_j)$ by its limit, and obtain that
as $n\to\infty$,
$$
\abs{\sqrt{k_n}\suit{\frac{1}{k_n}\sum_{i=1}^n1_{\set{\g X_i\in t\tilde u_n A_j}}-t^{-1}\nu_X(A_j)}- W_n(tA_j)}\stackrel{a.s.}{\to} 0,
$$
uniformly for $t\in[a, b]$ and $j=1,2,\ldots,l$. Divide both sides with $\nu_X(\Omega_r)$, together with defining a series fo Gaussian process as $\tilde W_n=W_n/\nu_X(\Omega_r)$, we get that 
\begin{equation} \label{eq: process result for convergence}
  \abs{\sqrt{k_n}\suit{\frac{1}{k_n\nu_X(\Omega_r)}\sum_{i=1}^n1_{\set{\g X_i\in t\tilde u_n A_j}}-t^{-1}p_j}- \tilde W_n(tA_j)}\stackrel{a.s.}{\to} 0,
  \end{equation}

We intend to change $t$ in \eqref{eq: process result for convergence} by $t_n=u_n/\tilde u_n=R^{(X)}_{n-k_n,n}/\tilde u_n$. For that reason, we first have to derive the asymptotic behavior for $t_n$. By adding the limit relation \eqref{eq: process result for convergence} for all $1\leq j\leq K$ and note that $\set{A_j}_{j=1}^K$ forms a partition of $\Omega_r$ we get that as $n\to\infty$, for $t\in[a,b]$
$$
\sqrt{k_n}\suit{\frac{1}{k_n\nu_X(\Omega_r)}\sum_{i=1}^n1_{\set{r(\g X_i)>t\tilde u_n}}-t^{-1}}- \tilde W_n(t\Omega_r)\to 0 \mbox{\ \ a.s.}
$$

By applying the Vervaat's lemma to this limit relation, we get that as $n\to\infty$, for $v\in [b^{-1},a^{-1}]$,
$$
\sqrt{k_n}\suit{\frac{R^{(X)}_{n-[k_n\nu_X(\Omega_r)v],n}}{\tilde u_n}-v^{-1}}-v^{-2}\tilde W_n\suit{v^{-1}\Omega_r}\to 0  \mbox{\ \ a.s.}
$$
With suitable choice of $a$ and $b$, such that $1/\nu_X(\Omega_r)\in[b^{-1},a^{-1}]$, we can take $v=1/\nu_X(\Omega_r)$, and get that as $n\to\infty$,
$$
\sqrt{k_n}\suit{\frac{R^{(X)}_{n-k_n,n}}{\tilde u_n}-\nu_X(\Omega_r)}- (\nu_X(\Omega_r))^{2}\tilde W_n(\nu_X(\Omega_r)\cdot\Omega_r)\to 0  \mbox{\ \ a.s.}
$$
Since $\nu_X(\Omega_r)\in[a,b]$, we can conduct the planned replacement. With replacing $t$ by $t_n=u_n/\tilde u_n$ in \eqref{eq: process result for convergence}, we get that as $n\to\infty$,
$$\sqrt{k_n}\suit{\frac{1}{\nu_X(\Omega_r)}\hat p_j-\suit{\frac{u_n}{\tilde u_n}}^{-1}p_j}-\tilde W_n(\nu_X(\Omega_r)\cdot A_j)\to 0 \mbox{\ \ a.s.}$$
Together with applying the Delta method to the asymptotic behavior of $u_n/\tilde u_n$, we get that, as $n\to\infty$, 
$$\sqrt{k_n}\suit{\frac{\hat p_j}{\nu_X(\Omega_r)}-\frac{p_j}{\nu_X(\Omega_r)}}-\suit{\tilde W_n(\nu_X(\Omega_r)\cdot A_j)-p_j\tilde W_n(\nu_X(\Omega_r)\cdot \Omega_r)}\to 0 \mbox{\ \ a.s.}$$
for all $j=1,2,\ldots,K$.
Recall the notation $\tilde W_n=W_n/\nu_X(\Omega_r)$. We thus conclude that as $n\to\infty$
\begin{equation} \label{almost final result for p}
  \sqrt{k_n}(\hat p_j-p_j)\stackrel{a.s.}{\to}W_n(\nu_X(\Omega_r)\cdot A_j)-p_jW_n(\nu_X(\Omega_r)\cdot \Omega_r),
\end{equation}
for all $j=1,2,\ldots,K$. Denote $N_j:=W_n(\nu_X(\Omega_r)\cdot A_j)$. Clearly $N_j$ are independently normally distributed random variables with mean zero and $W_n(\nu_X(\Omega_r)\cdot \Omega_r)=\sum_{j=1}^K N_j$.

The remaining part of the proof is to calculate the variance of each $N_j$, for $j=1,2,\ldots, K$:
$$\Var(N_j)=\nu_X(\nu_X(\Omega_r)\cdot A_j)=\frac{\nu_X(A_j)}{\nu_X(\Omega_r)}=p_j.$$
The proposition is thus proved.
\end{proof}

Now we can turn to prove the main theorems.
\begin{proof}[Proof of Theorem \ref{thm_AD_hom} (and Theorem \ref{thm_generic})]
Proposition \ref{prop: probability convergence estimate threshold} shows that as $n\to\infty$,
$$\sqrt{k_n} \left(\hat p_{j}-p_{j}\right)   \td  N^X_j:=N_j-p_j\sum_{j=1}^K N_j,$$
where $N^X_j\sim N(0,p_j(1-p_j))$ and
$$\Cov (N^X_{j_1}, N^X_{j_2})=-p_{j_1}p_{j_2},$$
for $1\leq j_1<j_2\leq K$.

Similarly, based on the observations $\g Y_1,\g Y_2,\ldots, \g Y_n$, we construct the conditional probability estimators $\hat q_j$. They have the following asymptotic property: as $n\to\infty$, for all $1\leq j\leq K$ jointly
$$\sqrt{k_n} \left(\hat q_{j}-q_{j}\right)   \td  N^Y_j,$$
where $N^Y_j\sim N(0,q_j(1-q_j))$ and
$$\Cov (N^Y_{j_1}, N^Y_{j_2})=-q_{j_1}q_{j_2},$$
for $1\leq j_1<j_2\leq K$. Since the observations drawn from $\g X$ and $\g Y$ are independent, we get that $(N^X_1,N^X_2,\ldots,N^X_K)$ and $(N^Y_1,N^Y_2,\ldots,N^Y_K)$ are independent.

Recall the multinomial \KL{} divergence defined in \eqref{eq:KL_Multinomial}. We derive the partial derivatives of its population version, $D_K$, with respect to each dimension as follows:
\begin{align*}
D_{1,j}&=\frac{\partial D_K}{\partial p_j}=\log \frac{p_j}{q_j}+1-\frac{q_j}{p_j},\\
D_{2,j}&=\frac{\partial D_K}{\partial q_j}=\log \frac{q_j}{p_j}+1-\frac{p_j}{q_j},\\
D_{11,j}&=\frac{\partial^2 D_K}{\partial p_{j}^2}=\frac{1}{p_j}+\frac{q_j}{p_j^2},\\
D_{22,j}&=\frac{\partial^2 D_K}{\partial q_{j}^2}=\frac{1}{q_j}+\frac{p_j}{q_j^2},\\
D_{12,j}&=\frac{\partial^2 D_K}{\partial p_{j}\partial q_j}=-\frac{1}{p_j}-\frac{1}{q_j}.
\end{align*}
Notice that except those listed above, all other second order partial derivatives are zero.

By using a Taylor expansion of $D_K$ we can get that as $n\to\infty$,
\begin{align*}
\hat D_K& =  D_K+\sum_{j=1}^K\suit{D_{1,j}(\hat p_j-p_j)+D_{2,j}(\hat q_j-q_j)}\\
&\:\:\:\:\:\:+\frac{1}{2}\sum_{j=1}^K\suit{D_{11,j}(\hat p_j-p_j)^2+D_{22,j}(\hat q_j-q_j)^2+2D_{12,j}(\hat p_j-p_j)(\hat q_j-q_j)}\\
&\:\:\:\:\:\:+o_P\suit{\sum_{j=1}^K\suit{(\hat p_{j}- p_{j})^2+(\hat q_{j}- q_{j})^2}}.
\end{align*}

If $p_j=q_j$ for all $1\leq j\leq K$, we have that $D_K=0, D_{1,j}=D_{2,j}=0$ for all $1\leq j\leq K$. Therefore, only the second order terms remain. Further $D_{11,j}=D_{22,j}=\frac{2}{p_j}=\frac{2}{q_j}$ and $D_{12,j}=-\frac{2}{p_j}=-\frac{2}{q_j}$.
Therefore, as $n\to\infty$,
\begin{align*}
\frac{k_n}{2}\hat D_K& = \frac{k_n}{2}\sum_{j=1}^K\frac{1}{p_j}\suit{(\hat p_j-p_j)^2+(\hat q_j-q_j)^2-2(\hat p_j-p_j)(\hat q_j-q_j)}+o_P(1)\\
&=\frac{k_n}{2}\sum_{j=1}^K\frac{1}{p_j}(\hat p_j-\hat q_j)^2+o_P(1)\\
&\td \sum_{j=1}^K \frac{1}{2p_j}(N^X_j-N^Y_j)^2
\end{align*}
Note that this is exactly the generic result in Theorem \ref{thm_generic} under the null hypothesis.

Under the setup of Theorem \ref{thm_AD_hom}, we can further derive the limit distribution as follows. Denote $\mathbf{L}=(L_1,L_2,\ldots,L_{K-1})^T$ with $L_j=N^X_j-N^Y_j$ for $j=1,2,\ldots,K$. Then we have that $\mathbf{L}\sim N(0,\Sigma)$, where $\Sigma=(\sigma_{j_1,j_2})_{1\leq j_1,j_2\leq K-1}$, with $\sigma_{j,j}=2p_j(1-p_j)$ and $\sigma_{j_1,j_2}=-2p_{j_1}p_{j_2}$ for $j_1\neq j_2$. Notice that $\sum_{j=1}^K L_j=0$, we can rewrite
$$\sum_{j=1}^K \frac{1}{2p_j}(N^X_j-N^Y_j)^2=\sum_{j=1}^{K-1} \frac{1}{2p_j}L_j^2+\frac{1}{2 p_K}\suit{\sum_{j=1}^{K-1} L_j}^2=\mathbf{L}^T\Sigma^{-1}\mathbf{L}\sim \chi^2(K-1).$$
Here in the last step, we use the fact that $\Sigma^{-1}=(\tilde \sigma_{j_1,j_2})_{1\leq j_1,j_2\leq K-1}$, with $\tilde \sigma_{j,j}=\frac{1}{2}\suit{\frac{1}{p_j}+\frac{1}{p_K}}$ and $\tilde \sigma_{j_1,j_2}=\frac{1}{2p_K}$ for $j_1\neq j_2$. It can be verified that $\Sigma\Sigma^{-1}=\mathbf{I}_{K-1}$. 

If $p_j\neq q_j$ for some $1\leq j\leq K$, then $D_K>0$. Further, $D_{1,j}\neq 0$ and $D_{2,j}\neq 0$ iff $p_j\neq q_j$. Therefore, we can ignore the second order terms and get that as $n\to\infty$,
$$\sqrt{k_n}\suit{\hat D_K-D_K}\td \sum_{j=1}^K\suit{D_{1,j}N^X_j+D_{2,j}N^Y_j}\sim  N(0, \sigma^2).$$
The general result is the same as in Theorem \ref{thm_generic}. Under the setup of Theorem \ref{thm_AD_hom}, one can further calculate $\sigma^2$, which ends up with the result as shown therein.
\end{proof}

\section{Special case: the maximum risk functional}\label{sec:max_fct}

Finally, we show that a specific example with $d=2$ as in Example \ref{example: max_2} in which the conditions in Theorem \ref{thm_generic} hold, in particular, we can derive the limit distributions under both the null and alternative hypotheses. Denote $A(w,v):=\set{\g x: x^{(1)}>w, x^{(2)}>v}$ for all $w,v\in[0,+\infty)$. The example turns to be a specific division for the region $\Omega_r:=\set{x:r(\g x)>1}$ as 
\begin{align*}
  A_1&:=A(1,1)=\set{\g x: \min(x_1,x_2)>1},\\
  A_2&:=A(0,1)\setminus A(1,1)=\set{\g x: 0<x_1\leq 1, x_2>1},\\
  A_3&:=A(1,0)\setminus A(1,1)=\set{\g x: 0<x_2\leq 1, x_1>1}
\end{align*}

We show that the generic conditions in Theorem \ref{thm_generic} hold. We further show that the limit distribution under the null hypothesis is a scaled chi-squared distribution and derive the variance of the limit normal distribution under the alternative hypothesis. 

Similar to \eqref{cond_prob}, we define $p_j,q_j$ for $j=1,2,3$ corresponding to these sets and their $\hat p_j$ and $\hat q_j$, using the transformed observations. 
Recall that $\g{X}$ has standard Pareto distributed marginals. We apply the condition \eqref{mevd} to the sets $A(w,v)$ for any $w,v\geq 1$ and obtain that
\begin{align}\label{eq: definition of Lambda}
  \lim_{u\to \infty} u\mathbb P(\g{X}/u \in  A(w,v))&=\lim_{u\to \infty} \mathbb P(\g{X}/u \in  A(w,v)|\norm{\g X}_\infty>u) \cdot uP(\norm{\g X}_\infty>u) \nonumber\\
 &=\frac{\nu_X(A(w,v))}{\nu_X(A(0,1))}=:\Lambda\left(\frac{1}{w},\frac{1}{v}\right).
\end{align}
We remark that $\Lambda(1/w,1/v)>0$ for all $w,v\geq 1$. We can extend the definition of $\Lambda$ to the set $\bar{\mathbf{R}}_+^2=[0,+\infty]^2\setminus\set{(+\infty,+\infty)}$. We make the following assumption for the partial derivatives of $\Lambda$, see \cite{bucher2013}.

\textbf{Assumption (D)}:  Assume that the partial derivatives of $\Lambda$ exists, denoted as $\Lambda_1$ and $\Lambda_2$. In addition, $\Lambda_l$ is continuous on the set $\set{\g x: 0<x_l<+\infty}$ for $l=1,2$.
 
Last but not least, we assume the second-order condition \eqref{mevd_2nd} holds for $\g{X}$ and $\g{Y}$. Such an assumption is needed for proving the asymptotic behavior of the estimators for $p_j$ and $q_j$. The following corollary provides the asymptotic theory for the test statistics.

\begin{coro}\label{thm_AD_hetero}
 
  Assume that condition \eqref{mevd_2nd} holds for random vectors $\g{X}$ and $\g{ Y}$ with  $\nu_X$ and $\nu_Y$, and second order scale functions $a_{X}$ and $a_{Y}$ respectively. In addition, assume that Assumption (D) holds.
  
  Let $k_n$ be a sequence such that as $n\to\infty$, $k_n\to\infty$, $k_n/n \to 0$, $\sqrt{k_n} a_{X}(n/k_n)  \to 0$ and $\sqrt{k_n} a_{Y}(n/k_n)  \to 0$.
  
  Under the null hypothesis that $p_{j}=q_{j}$ for all $j=1,2,3$, the Kullback--Leibler divergence converges as follows: as $n \to \infty$,
$$
\frac{k_n}{2} \hat D_K  \td \suit{\sum_{j=1}^3\frac{1}{p_j}-1}\frac{(\Lambda-1)(\Lambda+2\Lambda_1\Lambda_2)}{\Lambda}\chi^2(1),
$$
where $\Lambda=\Lambda(\g 1)=\nu_X(\Omega_r)$, $\Lambda_j=\Lambda_j(\g 1)$ for $j=1,2$ are the partial derivatives of the function $\Lambda(\cdot)$ evaluated at the point $\g 1$.

Under the alternative hypothesis that $p_{j} \neq q_{j}$ for some $j=1,2,3$, denote by $D$ the population version in~\eqref{eq:KL_Multinomial} with $p_{j},q_{j}$. Then we have that as $n \to\infty$,
$$
\sqrt{k_n} \suit{\hat D_K -D_K} \td N(0, \sigma^2),
$$
where
\begin{align*}
  \sigma^2&=[D_{1,1}(1+p_1)+ D_{1,2}(-1+p_2)+D_{1,3}(-1+p_3)]^2\frac{(\Lambda^X-1)(\Lambda^X+2\Lambda^X_1\Lambda^X_2)}{\Lambda^X}\\
  &\:\:+[D_{1,1}(1+p_1)+ D_{1,2}(-1+p_2)+D_{1,3}(-1+p_3)]^2\frac{(\Lambda^Y-1)(\Lambda^Y+2\Lambda^Y_1\Lambda^Y_2)}{\Lambda^Y},
\end{align*}
with $D_{i,j}$ defined as in Theorem \ref{thm_AD_hom} for $i=1,2$ and $j=1,2,3$. Here the two $\Lambda$ functions based on $\tilde X$ and $\tilde Y$ are denoted as $\Lambda^X(\cdot)$ and $\Lambda^Y(\cdot)$ respectively. $\Lambda^X=\Lambda^X(\g 1)=\nu_X(\Omega_r)$, $\Lambda^X_j=\Lambda^X_j(\g 1)$ for $j=1,2$ are the partial derivatives of the function $\Lambda^X(\cdot)$ evaluated at the point $\g 1$. $\Lambda^Y$, $\Lambda^Y_j$ ($j=1,2$) are defined analogously based on the function $\Lambda^Y(\cdot)$.
\end{coro}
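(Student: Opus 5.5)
The corollary follows from Theorem~\ref{thm_generic} once we establish a joint asymptotic expansion of $(\hat p_1,\hat p_2,\hat p_3)$ computed from the rank-based pseudo-observations~\eqref{X_pseudo}. For $r(\g x)=\max(x_1,x_2)$, the exceedance count equals $k_n$ by construction. The counts in $R^X_{n-k_n,n}A_j$ can then be written through the empirical tail copula function
\[
\hat\Lambda_n(x,y)=\frac1{k_n}\sum_{i=1}^n \einsfun\{\hat X_{i1}>n/(k_nx),\ \hat X_{i2}>n/(k_ny)\}.
\]
Since ranks are used, the threshold $R^X_{n-k_n,n}$ on the pseudo scale equals $n/(k_n s_n)$ for a random $s_n$ determined by $\hat\Lambda_n$. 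Indeed, the number of points with $\max>n/(k_ns)$ is $k_n(2s-\hat\Lambda_n(s,s))$ up to rounding, and setting this equal to $k_n$ gives $2s_n-\hat\Lambda_n(s_n,s_n)=1+O(1/k_n)$, with deterministic limit $s=1/\Lambda$ in the normalisation of~\eqref{eq: definition of Lambda}. Then
\[
\hat p_1=\hat\Lambda_n(s_n,s_n),\qquad \hat p_2=\hat p_3=s_n-\hat\Lambda_n(s_n,s_n)
\]
up to $O(1/k_n)$. This reduces everything to the process $\hat\Lambda_n$ evaluated near a single point.

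Next I would invoke the known weak convergence of the empirical tail copula with estimated margins, as in \cite{bucher2013}. Under Assumption (D), the second-order condition~\eqref{mevd_2nd} and $\sqrt{k_n}a_X\to0$, it gives
\[
\sqrt{k_n}\bigl(\hat\Lambda_n-\Lambda\bigr)\rightsquigarrow \mathbb G=W-\Lambda_1W(\cdot,\infty)-\Lambda_2W(\infty,\cdot),
\]
locally uniformly, with $W$ a Gaussian process with covariance $\Lambda(x\wedge x',y\wedge y')$. Combining this with the implicit equation for $s_n$ by a Vervaat/delta argument (using continuity of $\Lambda_1,\Lambda_2$ and homogeneity, so that $\Lambda_1+\Lambda_2=\Lambda$ at $\g1$ by Euler) yields $\sqrt{k_n}(s_n-s)\to \mathbb G(s,s)/(2-\Lambda)$. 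By homogeneity, this gives a linear expansion $\sqrt{k_n}(\hat p_j-p_j)\to c_j\,Z_X$ with a single scalar Gaussian $Z_X=\mathbb G(\g1)$ (rescaled) and explicit constants $c_j$; I expect the vector $(c_j)$ to be proportional to $(1+p_1,-1+p_2,-1+p_3)$ after using $p_1+p_2+p_3=1$. Thus the generic hypothesis of Theorem~\ref{thm_generic} holds with rank-one covariance $\Sigma^X=\tau_X^2 cc^T$. The variance $\tau_X^2=\mathrm{Var}\,\mathbb G(\g1)$ (suitably scaled) is computed from the covariance of $W$ at the points $(1,1)$, $(1,\infty)$ and $(\infty,1)$. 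This should produce $(\Lambda-1)(\Lambda+2\Lambda_1\Lambda_2)/\Lambda$; I would verify the algebra using $\Lambda(1,\infty)=\Lambda(\infty,1)=1$. The same applies to $\g Y$, independently.

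Finally, plug into Theorem~\ref{thm_generic}. Under $H_0$, $N^X_j-N^Y_j=c_j(\tau_XZ_X-\tau_YZ_Y)$, a single Gaussian, so the limit $\sum_j c_j^2/(2p_j)\,(\cdot)^2$ is a scaled $\chi^2(1)$. Here the null hypothesis forces equal $p_j$, but the $\Lambda$-constants may differ between $X$ and $Y$, so I would check whether the stated scale with a single $\Lambda$ implicitly uses $\Lambda^X=\Lambda^Y$. The factor $\sum_j 1/p_j-1$ should come from $\sum_j c_j^2/p_j$ after simplification. Under the alternative, $\sum_j(D_{1j}N^X_j+D_{2j}N^Y_j)$ is Gaussian with variance $(\sum_jD_{1j}c_j)^2\tau_X^2+(\sum_jD_{2j}c_j)^2\tau_Y^2$, matching the displayed $\sigma^2$. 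The main obstacle is the middle step: correctly handling the random threshold jointly with the rank-based margins, and obtaining the exact constants $c_j$ and $\tau^2$. I would first derive the linearisation pointwise at $\g1$, using only the finite-dimensional distributions of $\mathbb G$ plus uniform tightness near $\g1$.
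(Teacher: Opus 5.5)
Your route is the one the paper takes. The paper also writes $\hat p_1=I_1(t_n)$ and $\hat p_2=I_2(t_n)-I_1(t_n)$, with $I_1(t)=\hat\Lambda(1/t,1/t)$ (your $\hat\Lambda_n(s,s)$ with $s=1/t$), and uses that the marginal counts $I_2,I_3$ are deterministic. It then applies the B\"ucher--Dette limit and linearizes the random level $t_n=\frac{k_n}{n}R^{\hat X}_{n-k_n,n}$ by Vervaat's lemma applied to $I=I_2+I_3-I_1$; your implicit-equation/delta step is equivalent. This gives the rank-one expansion $\sqrt{k_n}(\hat p_j-p_j)\to c_jN_X$, with $c=(1+p_1,-1+p_2,-1+p_3)$ exactly and $N_X$ the corrected limit process at the single point $(s,s)$. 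Finally the paper reuses the Taylor expansion behind Theorem~\ref{thm_generic}. Your identity $\sum_jc_j^2/p_j=\sum_jp_j^{-1}-1$ is where the first factor comes from. Up to this point the plan is fine.

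The step you postpone is where it fails; the paper itself only says ``a straightforward calculation'' there.

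\textbf{The level $s$.} Put $\lambda:=\Lambda(\g 1)$. Under the paper's normalization $\Lambda(x,\infty)=x$, $\lambda=\nu_X(A_1)$ is the extremal correlation and $\nu_X(\Omega_r)=2-\lambda$, not $\Lambda(\g 1)$. Hence $s=1/(2-\lambda)$, not $1/\Lambda$. Your factor $2-\Lambda$ in $\sqrt{k_n}(s_n-s)\to\mathbb G(s,s)/(2-\Lambda)$ is correct only with $\Lambda=\lambda$. So you are using $\Lambda$ in two incompatible senses, exactly as the statement does.

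\textbf{The variance.} Use $E\,W(\g x)W(\g y)=\Lambda(\g x\wedge\g y)$, the $0$-homogeneity of $\Lambda_j$, and Euler's relation $\Lambda_1+\Lambda_2=\lambda$. Then
\[
\Var\bigl\{W(s,s)-\Lambda_1W(s,\infty)-\Lambda_2W(\infty,s)\bigr\}
=s\bigl\{\lambda+\Lambda_1^2+\Lambda_2^2-2\lambda(\Lambda_1+\Lambda_2)+2\lambda\Lambda_1\Lambda_2\bigr\}
=\frac{(1-\lambda)(\lambda-2\Lambda_1\Lambda_2)}{2-\lambda}.
\]
In terms of $\ell:=2-\lambda=\nu_X(\Omega_r)$ and $\ell_j:=1-\Lambda_j$, this is $(\ell-1)(\ell-2\ell_1\ell_2)/\ell$. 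So even the most charitable reading of the stated constant has the wrong sign on the product term.

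\textbf{A concrete check.} Take any symmetric model with $\lambda=1/2$, so $\Lambda_1=\Lambda_2=1/4$. The variance is $1/8$. The stated expression gives $7/8$, $13/24$, or a negative number, depending on how $\Lambda$ and $\Lambda_j$ are read.

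Your ``verify the algebra'' step therefore cannot succeed. What your argument, and the paper's, actually proves is the corollary with this corrected constant.

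Your caveat under $H_0$ is justified and should stay. Since $p_2=p_3$ automatically and $p_1=\lambda/(2-\lambda)$, $H_0$ forces only $\Lambda^X(\g 1)=\Lambda^Y(\g 1)$. It does not force $\Lambda^X_1\Lambda^X_2=\Lambda^Y_1\Lambda^Y_2$; the paper's Dirichlet-versus-Clayton pair with equal $\chi$ is such a case for the max partition. The null limit is therefore $\frac12\bigl(\sum_jp_j^{-1}-1\bigr)(\tau_X^2+\tau_Y^2)\,\chi^2(1)$. This has the stated one-constant form only when $\tau_X=\tau_Y$, which the paper's proof assumes silently.

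Under the alternative, the $Y$-term needs $D_{2,j}$ and $c^Y=(1+q_1,-1+q_2,-1+q_3)$, not a common $c_j$. That is what the paper's proof uses. The displayed $\sigma^2$, which repeats the $X$-bracket, is a typo, so your expression is the right one but does not literally match the display.
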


From the corollary, we observe that the marginal transformations can have an impact on the asymptotic behavior of the estimator for the Kullback--Leibler divergence, even under the null hypothesis. As a consequence, critical values derived from the $\chi^2$ distribution in Theorem \ref{thm_AD_hom} may not be valid. The actual asymptotic distribution may depend on the tail dependence structure. Nevertheless, since we can still derive the asymptotic distributions under both the null and alternative hypotheses with the same speed of convergence, we can use bootstrap method for deriving the critical values in the test.

To prove this Corollary, we first remark that $\Lambda$ defined in \eqref{eq: definition of Lambda} coincides with $\Lambda_L$ in \cite{bucher2013}, if we consider the random vector $\g{U_X}:=(1/X^{(1)},1/X^{(2)})$. Notice that $\g{U_X}$ follows a copula $C_X$ and
\begin{align*}
\lim_{t\to\infty}tC\left(\frac{1}{wt},\frac{1}{vt}\right)&=\lim_{t\to\infty}t\mathbb P\left(1/X^{(1)}\leq \frac{1}{wt},1/X^{(2)}\leq \frac{1}{vt}\right)\\
&=\lim_{t\to\infty}t\mathbb P\left(\g{X}\in tA(w,v)\right)=\Lambda\left(\frac{1}{w},\frac{1}{v}\right).
\end{align*}
Then we verify that all conditions requires in Theorem 2.2 in \cite{bucher2013} hold. Firstly, recall that the second order condition \eqref{mevd_2nd} holds for $\g{ X}$ with a second order scale function $a_{ X}(u)$. It implies that the limit relation in \eqref{eq: definition of Lambda} has the same speed of convergence determined by the function $a_{ X}(u)$. This is equivalent to (2.7) in \cite{bucher2013} with $A(t)=a_{ X}(t)$. Secondly, the intermediate sequence $k_n$ satisfies that $\sqrt{k_n}a_{ X}(n/k_n)\to 0$ as $n\to\infty$. Finally, our assumption (D) is the same as (2.14) therein. 

By applying Theorem 2.2 in \cite{bucher2013}, we immediately obtain the following Lemma, which is the main instrument used in proving Corollary \ref{thm_AD_hetero}.
\begin{lemma} \label{lem: from buecher and dette}
  Recall that $A(w,v)=\{\g x:x^{(1)}>w,x^{(2)}>v\}$. Define $$\hat\Lambda(x^{(1)},x^{(2)})=\frac{1}{k_n}\sum_{i=1}^n\einsfun\left\{\g{\hat X}_i\in \frac{n}{k_n}A\left(\frac{1}{x^{(1)}},\frac{1}{x^{(2)}}\right)\right\},$$
  for any $\g x=(x^{(1)},x^{(2)})\in \bar{\mathbf{R}}_+^2=[0,+\infty]^2\setminus\set{(+\infty,+\infty)}$. Then as $n\to\infty$,
  $$\sqrt{k_n}(\hat \Lambda(\g x)-\Lambda (\g x))\leadsto \mathbb G(\g x)-\Lambda_1(\g x)1 \mathbb G(x^{(1)},+\infty)-\Lambda_2(\g x)\mathbb G(+\infty, x^{(2)}),$$
  where $\Lambda_1$ and $\Lambda_2$ are the two partial derivatives of $\Lambda$, $\mathbb G$ is a Gaussian random fields with mean function zero and covariance structure $\mathbb E \mathbb G(\g x)\mathbb G(\g y)=\Lambda(\g x \wedge \g y)$. Here the convergence is in the space $\mathcal{B}_\infty (\bar{\mathbf{R}}_+^2)$ with well equipped norm. For details regarding the space and norm, see \cite{bucher2013}.
\end{lemma}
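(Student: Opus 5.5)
The plan is to derive Lemma~\ref{lem: from buecher and dette} as a direct specialization of Theorem~2.2 in \cite{bucher2013}, so the work is a translation between the two settings rather than a new limit theorem. First, I will put the estimator in the tail-copula form used there. Since $\hat X_{ij}=(n+1)/(n+1-\mbox{rank}(\tilde X_{ij}))$, the event $\hat X_{i}^{(1)}>\frac{n}{k_n}\cdot\frac{1}{x^{(1)}}$ is, up to the $n$ versus $n+1$ normalization, the same as $n+1-\mbox{rank}(\tilde X_{i1})< k_n x^{(1)}$, and likewise for the second coordinate. Hence
$$\hat\Lambda(\g x)=\frac{1}{k_n}\sum_{i=1}^n \einsfun\bigl\{n+1-\mbox{rank}(\tilde X_{i1})<k_nx^{(1)},\ n+1-\mbox{rank}(\tilde X_{i2})<k_nx^{(2)}\bigr\}.$$
This is, up to the convention of using upper instead of lower ranks and boundary ties, the empirical tail copula estimator $\hat\Lambda_L$ of \cite{bucher2013} applied to the (reflected) pseudo-observations. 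Since ranks are invariant under the monotone transformation \eqref{X_true}, I may regard the data as coming from $\g U_X=(1/X^{(1)},1/X^{(2)})$ with copula $C_X$.

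Second, I will quantify the discrepancies between our estimator and theirs. The $n/(n+1)$ factor and the strict/non-strict inequality at ties change $\hat\Lambda$ by at most $O(1/k_n)$ uniformly on compacts: shifting a rank threshold by one alters at most one indicator per margin. After multiplying by $\sqrt{k_n}$ this is $o(1)$, so it does not affect the weak limit.

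Third, I will check the hypotheses of their Theorem~2.2, which the text before the lemma already sketches. The tail copula limit is exactly \eqref{eq: definition of Lambda}. The second-order condition (2.7) there follows from \eqref{mevd_2nd} applied to the sets $A(w,v)$, with rate $A(t)=a_X(t)$. This requires that the normalization by $P(\|\g X\|_\infty>u)$ in \eqref{mevd_2nd} converts to the normalization by $u$, which holds since $uP(\|\g X\|_\infty>u)\to\nu_X(A(0,1))^{-1}$-type constants at a rate also governed by $a_X$ (using \eqref{mevd_2nd} on $A(0,1)\cup A(1,0)$). The bias condition becomes $\sqrt{k_n}a_X(n/k_n)\to0$, and Assumption~(D) matches their (2.14). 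Applying their theorem then yields weak convergence in $\mathcal B_\infty(\bar{\mathbf R}_+^2)$ to $\mathbb G(\g x)-\Lambda_1(\g x)\mathbb G(x^{(1)},\infty)-\Lambda_2(\g x)\mathbb G(\infty,x^{(2)})$, with covariance $\Lambda(\g x\wedge\g y)$.

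The main obstacle is the uniformity of the second-order condition. Condition (2.7) in \cite{bucher2013} requires the bias bound uniformly over compact subsets of $\bar{\mathbf R}_+^2$, including points with an infinite coordinate, whereas \eqref{mevd_2nd} is stated set by set. I would handle this by using monotonicity of $(w,v)\mapsto P(\g X\in tA(w,v))$ together with continuity of $\Lambda$: a finite grid argument upgrades the pointwise $O(a_X)$ bounds to uniform $O(a_X)+$ mesh error, with the mesh chosen to shrink as needed. If this fails at full rate, I would read \eqref{mevd_2nd} as holding uniformly over the VC-type class of sets $A(w,v)$, as was implicitly used in the proof of Proposition~\ref{prop: probability convergence estimate threshold}.
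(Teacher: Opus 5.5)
Your proposal is correct and is the paper's argument: the paper likewise identifies $\Lambda$ with the lower tail copula of $\g U_X=(1/X^{(1)},1/X^{(2)})$, treats \eqref{mevd_2nd} as equivalent to condition (2.7) of \cite{bucher2013} with $A(t)=a_X(t)$, uses $\sqrt{k_n}a_X(n/k_n)\to0$, identifies Assumption (D) with their (2.14), and invokes their Theorem 2.2; your rank bookkeeping ($n$ versus $n+1$, ties, an $O(1/k_n)$ change) fills in a step the paper leaves implicit. Two small corrections: the rate for $uP(\norm{\g X}_\infty>u)$ comes from \eqref{mevd_2nd} applied to $A(0,1)$ (or $A(1,0)$) alone, whereas on $A(0,1)\cup A(1,0)$, the whole exceedance region, the condition is vacuous; and your grid argument cannot produce the required $o(k_n^{-1/2})$ uniform bias, because the mesh would have to be $o(k_n^{-1/2})$ while the set-by-set constants in \eqref{mevd_2nd} are uncontrolled on a growing grid, so reading \eqref{mevd_2nd} uniformly over $\{A(w,v)\}$ is not a fallback but a necessity---and it is exactly what the paper tacitly assumes.
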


\begin{proof}[Proof of Corollary \ref{thm_AD_hetero}]
Define $I_1(t)=\hat\Lambda(1/t,1/t)$ for $t>0$.. We remark that its theoretical counterpart is $\Lambda(1/t,1/t)=\nu_X(A(t,t))=\frac{1}{t}\nu_X(A_1)$. We apply Lemma \ref{lem: from buecher and dette}  with $\g x$ replaced by $(1/t,1/t)$ and obtain the following result: as $n\to\infty$,
\begin{align}
  &\sqrt{k_n}\left(I_1(t)-\frac{\nu_X(A_1)}{t}\right)\nonumber\\
  \leadsto & 
      \:\:\mathbb G(1/t, 1/t)-\Lambda_1(1/t, 1/t)1 \mathbb G(1/t,+\infty)-\Lambda_2(1/t, 1/t)\mathbb G(+\infty, 1/t)\nonumber\\
      =: &\:\mathbb W(t),\label{eq: asymptotic for I_1}
\end{align}
where $\{\mathbb W(t)\}_{t\in(0,+\infty)}$ is a centered Gaussian process with a specific covariance structure. We only focus on the convergence on a compact set $t\in[a,b]$, where $0<a<b<+\infty$. Then the convergence is uniform on $[a,b]$.

Next consider $I_2(t)=\hat\Lambda(1/t,+\infty)$ and $I_3(u)=\hat\Lambda(+\infty,1/t)$. In fact, these two quantities are non-random since marginals of $\g{\hat X}$ are deterministic values $n/1, n/2, \cdots, n/n$. It is straightforward to verify that as $n\to\infty$,
$$\sqrt{k_n}(I_j(t)-1/t)\to 0,$$
for all $t\in [a,b]$ and $j=2,3$. The result can also be obtained by applying Lemma \ref{lem: from buecher and dette}.

Define 
$$I(t)=I_2(t)+I_3(t)-I_1(t)=\frac{1}{k_n}\sum_{i=1}^n\einsfun\left\{r(\g{\hat X}_i)>\frac{n}{k_n}t\right\},$$
where $r(\g x)=\max(\g x)$. Then we get that as $n\to\infty$,
$$\sqrt{k_n}\left(I(t)-\frac{1}{t}\nu_X(\Omega_r))\right)\leadsto -\mathbb W(t),$$
for $t\in [a,b]$.

Under a proper Skorokhod construction, we can have the convergence above in the almost surely sense. Choose particularly $a=\nu_X(\Omega_r)/2$ and $b=2\nu_X(\Omega_r)$. Then by applying the Vervaat's lemma to this limit relation, we get that as $n\to\infty$, for $v\in [1/2,2]$,
$$
\sqrt{k_n}\suit{\frac{k_n}{n}R^{\hat X}_{n-[k_nv],n}-v^{-1}\nu_X(\Omega_r)}\leadsto -\frac{\nu_X(\Omega_r)}{v^2}\mathbb W\suit{v^{-1}\nu_X(\Omega_r)}.
$$
By setting $v=1$, we get that  as $n\to\infty$,
\begin{equation} \label{eq: asymptotic for threshold}
\sqrt{k_n}\suit{\frac{k_n}{n}R^{\hat X}_{n-k_n,n}-\nu_X(\Omega_r)}\leadsto -\nu_X(\Omega_r)\mathbb W\suit{\nu_X(\Omega_r)}.
\end{equation}
Therefore, we can replace $t=\frac{k_n}{n}R^{\hat X}_{n-k_n,n}$ in \eqref{eq: asymptotic for I_1}. Notice that such a replacement leads to exactly the estimator $\hat p_1$ since
$$I_1\suit{\frac{k_n}{n}R^{\hat X}_{n-[k_n],n}}=\frac{1}{k_n}\sum_{i=1}^n\einsfun\left\{\g{\hat X}_i\in R^{\hat X}_{n-[k_n],n} A_1\right\}=\hat p_1$$
Hence we get that as $n\to\infty$
$$\sqrt{k_n}\suit{\hat p_1-\suit{\frac{k_n}{n}R^{\hat X}_{n-[k_n],n}}^{-1}\nu(A_1)}\leadsto \mathbb W(\nu_X(\Omega_r)).$$
Together with \eqref{eq: asymptotic for threshold} and the fact that $p_1=\nu(A_1)/\nu(\Omega_r)$, we get that as $n\to\infty$,
$$\sqrt{k_n}\suit{\hat p_1-p_1}\leadsto (1+p_1)\mathbb W(\nu_X(\Omega_r)).$$
Similarly, by considering $I_2(t)-I_1(t)$ and replace $t=\frac{k_n}{n}R^{\hat X}_{n-k_n,n}$, we can obtain that as $n\to\infty$
$$\sqrt{k_n}\suit{\hat p_2-\suit{\frac{k_n}{n}R^{\hat X}_{n-[k_n],n}}^{-1}\nu(A_2)}\leadsto -\mathbb W(\nu_X(\Omega_r)).$$
Together with \eqref{eq: asymptotic for threshold} and the fact that $p_2=\nu(A_2)/\nu(\Omega_r)$, we get that as $n\to\infty$,
$$\sqrt{k_n}\suit{\hat p_2-p_2}\leadsto (-1+p_2)\mathbb W(\nu_X(\Omega_r)).$$
Similarly, we have that as $n\to\infty$,
$$\sqrt{k_n}\suit{\hat p_3-p_3}\leadsto (-1+p_3)\mathbb W(\nu_X(\Omega_r)).$$
Denote $N_X=\mathbb W(\nu_X(\Omega_r))$. We get that a joint convergence for the estimators: as $n\to\infty$,
\begin{align*}
  \sqrt{k_n}\left(\begin{matrix}
    \hat p_1-p_1\\
    \hat p_2-p_2\\
    \hat p_3-p_3
  \end{matrix}\right)\td \left(\begin{matrix}
    1+p_1\\
    -1+p_2\\
    -1+p_3
  \end{matrix}\right)N_X.
\end{align*}
In fact a straightforward calculation gives the variance of $N_X=\mathbb W(\nu_X(\Omega_r))$ as 
$$\mathbb V(N_X)=\frac{(\Lambda-1)(\Lambda+2\Lambda_1\Lambda_2)}{\Lambda},$$
where $\Lambda=\Lambda(\g 1)=\nu_X(\Omega_r)$, $\Lambda_j=\Lambda_j(\g 1)$ for $j=1,2$.

We can derive the asymptotic behavior for $\hat q_j$ $j=1,2,3$ in a similar way with a limit random variable $N_Y$ independent of $N_X$.

Under the null hypothesis that $p_j=q_j$ for $j=1,2,3$, similar to the proof of Theorem \ref{thm_AD_hom}, we get that as $n\to\infty$,
\begin{align*}
\frac{k_n}{2}\hat D_K&\td \frac{1}{2p_1}(1+p_1)^2(N_X-N_Y)^2+\frac{1}{2p_2}(-1+p_2)^2(N_X-N_Y)^2+\frac{1}{2p_3}(-1+p_3)^2(N_X-N_Y)^2\\
&\stackrel{d}{=}\suit{\sum_{j=1}^3\frac{1}{p_j}-1}\frac{(\Lambda-1)(\Lambda+2\Lambda_1\Lambda_2)}{\Lambda}\chi^2(1)
\end{align*}

Under the alternative hypothesis, similar to the proof of Theorem \ref{thm_AD_hom}, we get that as $n\to\infty$,
\begin{align*}
  &\sqrt{k_n}\suit{\hat D_K-D_K}\\
  \td &  [D_{1,1}(1+p_1)+ D_{1,2}(-1+p_2)+D_{1,3}(-1+p_3)]N_X\\
  &+[D_{2,1}(1+q_1)+ D_{2,2}(-1+q_2)+D_{2,3}(-1+2_3)]N_Y,
\end{align*}
which results in the asymptotic normality. Further calculations leads to the variance of the limit distribution as in the Corollary.
\end{proof}

\section{Divergence as function of extremal correlation}\label{extremal_correlation}

Consider the maximum risk functional with the same three sets $A_1, A_2, A_3$ as in Appendix~\ref{sec:max_fct}, and with corresponding 
probabilities $p_j, q_j$, $j=1,2,3$. Note that by standardization, the probabilities 
$p_2 = p_3$ and $q_2=q_3$. Moreover, note that $1 = p_1+p_2+p_3 = p_1 + 2p_2$, and therefore $p_2 = (1-p_1)/2$.
The extremal correlation coefficient corresponding to the
exponent measure $\nu_X$ is given by $\chi_X = p_1 / (p_1+p_2) = 2 p_1 / (1 + p_1)$. Solving this we obtain $p_1 = \chi_X / (2-\chi_X)$, and similarly $q_1 = \chi_Y / (2-\chi_Y)$. We the also have $p_2 = p_3 = (1-\chi_X)/(2-\chi_X)$, and similarly for $q_2=q_3$. Plugging this into the definition of the KL divergence in~\eqref{eq:KL_Multinomial} we get
\begin{align*}
    D_3 &= \left(\frac{\chi_X}{2 - \chi_X} - \frac{\chi_Y}{2 - \chi_Y} \right)\left(\frac{\chi_X}{2 - \chi_X} - \log \frac{\chi_Y}{2 - \chi_Y} \right)\\
    &+ 2\left(\frac{1-\chi_X}{2 - \chi_X} - \frac{1-\chi_Y}{2 - \chi_Y} \right)\left(\frac{1-\chi_X}{2 - \chi_X} - \log \frac{1-\chi_Y}{2 - \chi_Y} \right).
\end{align*}
This shows that the KL divergence for the maximum risk functional is 
a simple function of the extremal correlation coefficients.

\section{Additional simulations for power of test}\label{app:simu}

Similar to Figure~\ref{fig:clay_clay_H0}, Figure~\ref{fig:clay_clay_H0_max} shows the 
divergence and rejection percentages in 
the case where the risk functionals is the
maximum risk functional $r(\mathbf x) = \max(x_1,x_2)$.

\begin{figure}[tb]
	\centering
	\hspace*{-2.7em}\includegraphics[width=0.38\linewidth]{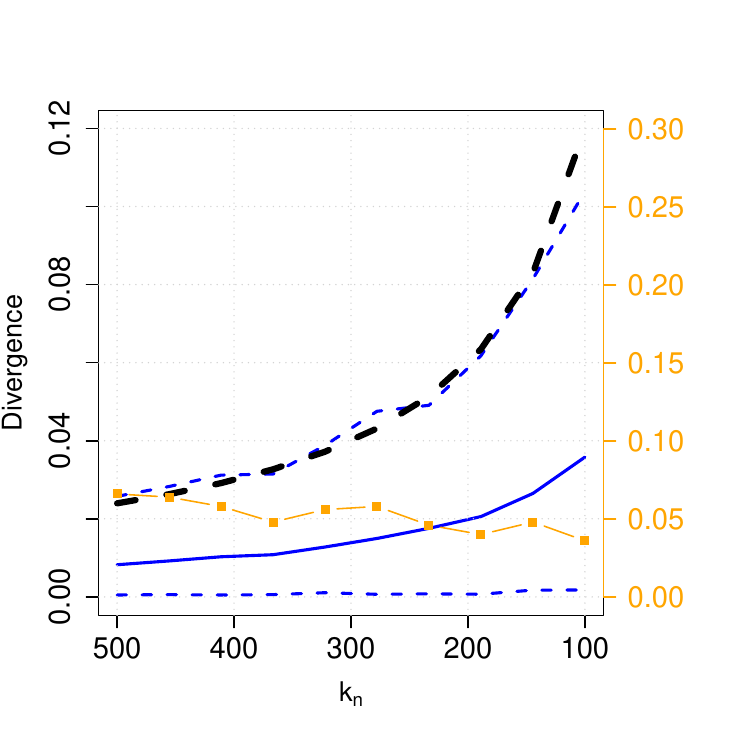}
  \hspace*{-1.8em}\includegraphics[width=0.38\linewidth]{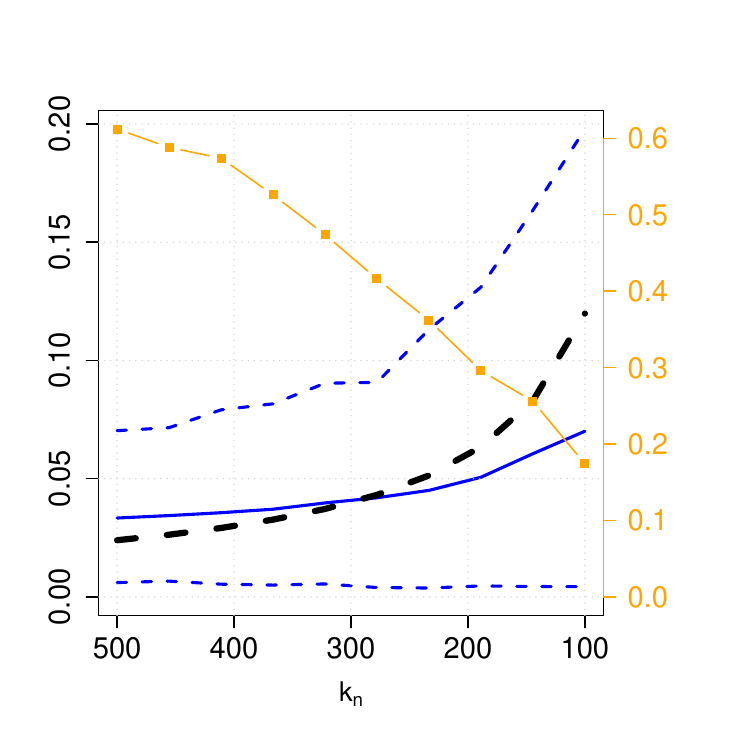}
	\hspace*{-1.8em}\includegraphics[width=0.38\linewidth]{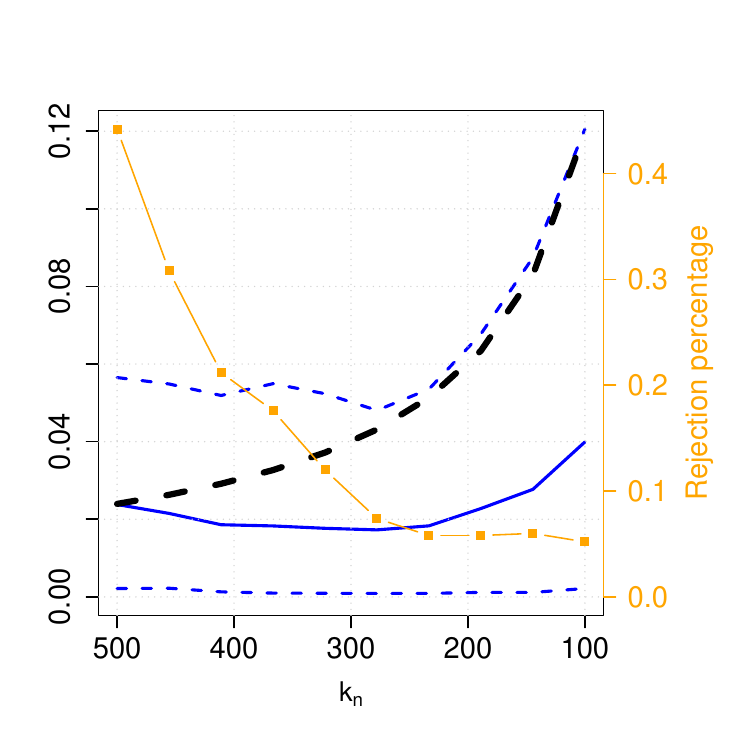}
 \hspace*{-3em}\\ \vspace*{-4.5em}%
	\hspace*{-2.7em}\includegraphics[width=0.38\linewidth]{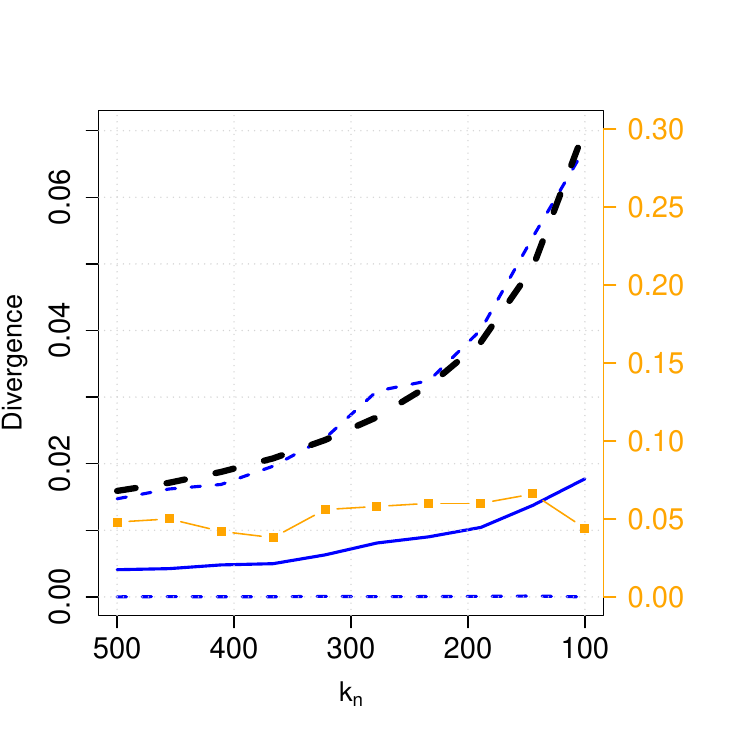}
  \hspace*{-1.8em}\includegraphics[width=0.38\linewidth]{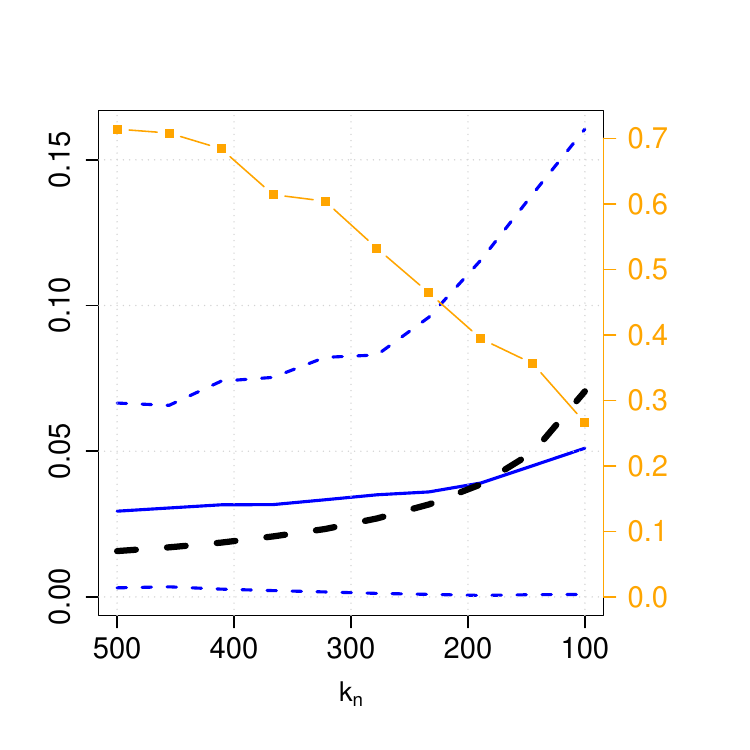}
	\hspace*{-1.8em}\includegraphics[width=0.38\linewidth]{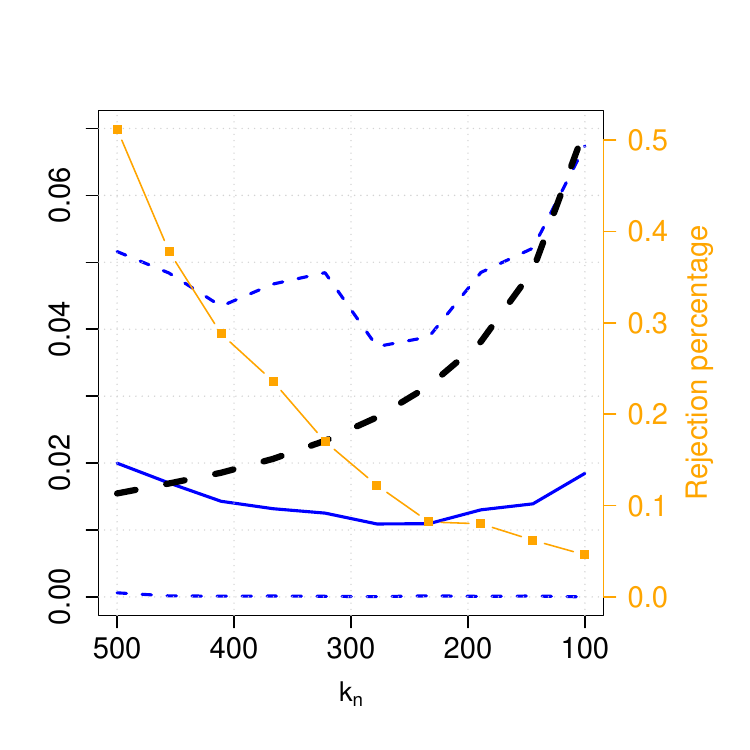}
 \vspace*{-2em}
  \caption{Mean (solid blue line) and empirical $5\%$ and $95\%$ quantiles (dashed blue lines) of 500 samples of KL test statistic $\hat D_K$ based on maximum risk functional with $K=3$ sets for a range of exceedances $k_n$
  together with the critical values (black dashed line) at level $95\%$ and rejection percentages (orange line). Top and bottom rows show results for known and unknown margins, respectively. 
   The two samples are generated from the same distribution (left), from distributions with different extremal dependence structure (center), and from different distributions with same extremal dependence structure (right).}
 \label{fig:clay_clay_H0_max}
\end{figure}

\end{document}